\documentclass[a4paper,10pt]{article}
\usepackage[english]{babel}
\usepackage{amsthm,amsmath,amssymb,bbm}
\usepackage{stackengine}
\usepackage[numbers]{natbib}

\usepackage{tikz}
\usetikzlibrary{positioning, calc}

\newtheorem{thm}{Theorem}[section]

\newtheorem{ex}[thm]{Example}
\newtheorem{prop}[thm]{Proposition}
\newtheorem{defin}[thm]{Definition}
\newtheorem{rema}[thm]{Remark}
\newtheorem{corollary}[thm]{Corollary}
\usepackage{xcolor}

\def \n{\Vert}

\def\E{{\mathbb{E}}}
\def\H{{\mathsf{H}}}
\def\C{{\mathbb{C}}}

\def\P{{\mathbb{P}}}
\def\R{{\mathbb{R}}}
\def\N{{\mathbb{N}}}

\def\F{{\cal{F}}}

\newcommand{\norm}[1]{\left\lVert #1 \right\rVert}

\newcommand{\ran}{\operatorname{ran}}
\newcommand{\Ker}{\operatorname{ker}}

\newcommand{\Pncoef}{\mathcal{P}_n}

\newcommand{\pairHH}[2]{\left\langle #1,#2 \right\rangle_{\H,\H^*}}

\def\limt{\lim_{t\to\infty}}
\def\limt0{\lim_{t\to 0}}

\def\|{\,|\,}

\def\bn#1\en{\begin{align*}#1\end{align*}}
\def\bnn#1\enn{\begin{align}#1\end{align}}

\allowdisplaybreaks

\title{An Extremal Reconstruction Principle under Covariance Domination}

\author{Philip Kennerberg\footnote{
Email: pkennerberg@gmail.com}} 
\date{}

\begin{document}
	\maketitle

\begin{abstract}
We identify a structural extremal principle governing
residual \(L^2\)-norms over operator-ordered covariance
envelopes.
In contrast to the centered setting, where such quantities reduce to
trace expressions involving covariance operators, the non-centered
framework generates mixed terms that cannot be recovered from covariance
ordering alone.

We show that the worst-case squared residual \(L^2\)-norm over an
operator-ordered covariance envelope is attained at a canonical envelope
representative, possibly belonging only to the closure of the admissible
class. The resulting extremal identity holds uniformly over all
admissible reconstruction operators.

The result is obtained without convexity, compactness, or a global Hilbert space structure governing all components of the system.

As a consequence, the associated minimax reconstruction problem over
covariance envelopes reduces to evaluation at a canonical representative
under covariance domination.
\end{abstract}

\section{Introduction}
We study an operator-theoretic extremal principle for least-squares
reconstruction under covariance domination.
The basic setting consists of a target component and an observed component,
both generated from a common source through a fixed bounded operator.
Given a source $A$ and an associated additive baseline component $\xi^A$,
we consider pairs
\[
(Y^A,X^A)=\mathcal S(A+\xi^A),
\]
where
\[
\mathcal S:\mathbb H_o^d\to \mathbb H_o\times \H^*
\]
is a bounded linear map,
$\mathbb H_o$ is a separable Hilbert space,
and $\H^*$ denotes the dual of a separable Hilbert space $\H$.
The component $X^A\in\H^*$ is interpreted as the observed component,
while $Y^A\in\mathbb H_o$ is the target component to be reconstructed.


This framework should be viewed as a structural reconstruction model rather
than a formulation tied to a single perturbative interpretation. The same
covariance-envelope mechanism admits two complementary viewpoints.

In the first viewpoint, the source component \(A\) represents the uncertain
or varying component of the system, while the baseline component \(\xi^A\)
represents a fixed, typically random, background structure (for example,
noise). In the second viewpoint, the baseline component \(\xi^A\)
represents the intrinsic reference structure, while the source component
\(A\) models structured perturbative deviations relative to this
reference structure.

The present paper considers two corresponding realizations of this
framework: a wide-sense stationary/linear time-invariant (WSS/LTI)
realization associated with the first viewpoint, and an elliptic PDE
reconstruction realization associated with the second viewpoint.
Functional regression models, including robust functional regression and
functional anchor regression, provide further instances of the second
viewpoint. 

The elliptic PDE example provides a reconstruction setting in which
covariance domination on the perturbative component \(A\) induces
covariance domination for the associated reconstructed quantities.
The WSS/LTI example places the covariance-envelope framework in the
setting of robust Wiener filtering under spectral uncertainty, where
covariance domination becomes a pointwise Loewner-order constraint on
matrix-valued spectral densities. In contrast to classical minimax
formulations based on optimization over admissible spectral
classes~\cite{Poor1980}, the present framework yields an extremal
reduction in which the maximal residual cost over the covariance
envelope is attained at a canonical representative.

The framework also includes models without baseline components
(i.e.\ \(\xi^A=0\)), although the presence of baseline components allows for
substantially richer perturbative structures.
\noindent
To reconstruct $Y^A$ from $X^A$, we consider operators
\[
T\in\mathcal{HS}(\H^*,\mathbb H_o),
\]
and define the associated quadratic residual cost
\[
R_A(T)
=
\int_\Omega
\lVert Y^A-T(X^A)\rVert_{\mathbb H_o}^2\,d\mu .
\]
The admissible sources are assumed to satisfy a covariance domination constraint.
Given a family of admissible sources $\mathcal A$
and a reference source $A\in\overline{\mathcal A}$, we consider the associated covariance envelope
\[
C_{\mathcal A}(A)
=
\{A'\in\mathcal A:\Sigma_{A'}\preceq \Sigma_A\},
\]
where the ordering is understood in the Löwner sense.
The main result of the paper establishes the extremal identity
\[
\sup_{A'\in C_{\mathcal A}(A)}R_{A'}(T)
=
R_A(T)
\]
for every fixed admissible operator $T$.
Thus the maximal residual over the covariance envelope is attained at a canonical representative.
The result is nontrivial for several reasons.
The covariance envelopes are in general neither convex nor closed,
the extremal representative may exist only in the closure of the admissible class,
and the residual cost cannot be reduced to a monotone functional
of a single covariance operator.
For each admissible source $A$, the associated baseline component $\xi^A$
is assumed to satisfy
\[
\int_\Omega \xi^A\otimes\xi^A\,d\mu=\Sigma_\xi,
\qquad
\int_\Omega A\otimes\xi^A\,d\mu=0.
\]
Thus the baseline contribution has fixed second--order structure and is
orthogonal to the source component at the covariance level.

\medskip\noindent
\textbf{Extremal envelope principle (informal statement).}
Fix a reference source \(A\in\overline{\mathcal A}\) and let
\(C_{\mathcal A}(A)\) denote the associated covariance envelope.
For every admissible reconstruction operator
\[
T\in\mathcal{HS}(\H^*,\mathbb H_o),
\]
the maximal residual \(L^2\)-norm over the entire envelope is attained
by the canonical representative \(A\):
\[
\sup_{A'\in C_{\mathcal A}(A)}
R_{A'}(T)
=
R_A(T).
\]
The extremal identity therefore reduces the associated minimax
reconstruction problem over the covariance envelope to evaluation at the
canonical representative.
When optimization over admissible operators $T$ is subsequently
considered, the extremal identity collapses the envelope problem to a
single least-squares minimization problem.
In the WSS/LTI setting this leads to a frequency--domain characterization
of the associated reconstruction operator through matrix-valued spectral
densities and operator-valued normal equations.

\medskip\noindent


\medskip\noindent
\textbf{Relation to reconstruction, robust filtering, and operator theory.}
The resulting framework yields a canonical envelope reduction for
infinite-dimensional reconstruction problems under covariance
domination.

The WSS/LTI specialization admits a spectral-domain formulation under covariance
domination. In this setting, the covariance domination condition
\[
\Sigma_{A'}\preceq \Sigma_A
\]
induces a pointwise Loewner-order constraint on the associated
matrix-valued spectral densities. Classical robust filtering
formulations typically rely on minimax optimization over admissible
spectral classes~\cite{Poor1980,KassamPoor1985}, whereas the present
framework reduces the minimax reconstruction problem to evaluation at a
canonical envelope representative. From an operator-theoretic viewpoint,
the associated reconstruction problem is also related to variational
formulations in Hilbert spaces of the type developed by
Luenberger~\cite{Luenberger1969}. The WSS/LTI specialization therefore
provides a spectral realization of the general reconstruction framework
within the analysis of linear dynamical systems.

From a functional-analytic viewpoint, the present extremal mechanism is
distinct from classical spectral variational principles such as the
Rayleigh--Ritz and Courant--Fischer characterizations arising in
operator and matrix analysis
(see, e.g.,~\cite{Bhatia1997,Conway1990}). In contrast to spectral
variational principles based on subspace optimization and eigenvalue
characterizations, the present framework concerns extremal reconstruction
costs over operator-ordered covariance envelopes.

\medskip\noindent
\textbf{Organization of the paper.}
Section~2 introduces the covariance-envelope framework and establishes
the envelope extremal principle.
Section 3 studies the associated extremal minimization problem, derives the
operator-valued normal equations, and analyzes the resulting reconstruction
operators in abstract form and in elliptic and WSS/LTI realizations.

\subsection{Operator-theoretic framework}\label{sec:sfr}

We work on a fixed finite measure space $(\Omega,\F,\mu)$. 
Let $\H$ denote a real separable Hilbert space. 
We consider the vector space
\[
V_1 =\mathcal{HS}(\H^*,\mathbb{H}_o),
\]
where $\H^*$ is the topological dual of $\H$. Typical realizations include the following.
\begin{itemize}
\item \textbf{\(L^2\)-kernel realization.}
Take $\H=\mathbb H_o=L^2([t_1,t_2])$. Then $\H^*\cong L^2([t_1,t_2])$ and
\[
V_1=\mathcal{HS}(\H^*,\mathbb H_o)\cong L^2([t_1,t_2]^2),
\]
via the standard correspondence between Hilbert Schmidt operators and $L^2$ kernels.

\item \textbf{Abstract Hilbert Schmidt case (tensor form).}
Let $\H$ be any real separable Hilbert space and let $\mathbb H_o$ be any real separable Hilbert space.
Then
\[
V_1=\mathcal{HS}(\H^*,\mathbb H_o)\cong \mathbb H_o\widehat\otimes \H,
\]
canonically and isometrically. This covers, for example, $\H=\ell^2$ and $\mathbb H_o=\ell^2$,
in which case $V_1$ is the space of Hilbert Schmidt matrices.

\item \textbf{Sobolev duality for the source space.}
Fix $r>0$ and take $\H=H^{r}([t_1,t_2])$, so $\H^*=H^{-r}([t_1,t_2])$.
Then $V_1=\mathcal{HS}(H^{-r},\mathbb H_o)$.

\end{itemize}

\paragraph{Dual pairing.}  
For $h\in \H$ and $\ell\in \H^*$ we write $\pairHH{h}{\ell}:=\ell(h)$.  
Via the Riesz isometry $\mathcal R_\H:\H\to \H^*$, $(\mathcal R_\H v)(h)=\langle h,v\rangle_\H$, 
we identify $\H$ with $\mathcal R_\H(\H)\subset \H^*$.  
Thus, whenever $v\in \H$, 
\[
\pairHH{h}{v}:=\pairHH{h}{\mathcal R_\H v}=\langle h,v\rangle_\H .
\]

\paragraph{The inputs.}  
The primitive elements of the system are \emph{baseline components}
and \emph{source components}. Both take values in \(\mathbb H_o^d\).
We define
\[
\mathcal V
=
\Bigl\{
U:\Omega\to\mathbb H_o^d
\ \text{$\F$--measurable} :
\sum_{i=1}^d
\int_\Omega
\|U(i)\|_{\mathbb H_o}^2\,d\mu
<
\infty
\Bigr\}.
\]
Equipped with the inner product
\[
\langle U_1,U_2\rangle_{\mathcal V}
=
\sum_{i=1}^d
\int_\Omega
\langle U_1(i),U_2(i)\rangle_{\mathbb H_o}\,d\mu,
\]
the space \(\mathcal V\) becomes a real separable Hilbert space.
All integrals of \(\mathbb H_o^d\)-valued elements are understood in the
Bochner sense.
Let $\Sigma_\xi$ be a fixed positive semidefinite operator on $\mathbb H_o^d$.
For each admissible source $A$ we assume the existence of a corresponding
baseline component $\xi^A \in \mathcal V$ such that
\[
\int_\Omega \xi^A \otimes \xi^A \, d\mu = \Sigma_\xi,
\qquad
\int_\Omega A \otimes \xi^A \, d\mu = 0
\qquad\text{in }\mathcal L(\mathbb H_o^d),
\]
where \(\mathcal L(\mathbb H_o^d)\) denotes the space of bounded linear
operators on \(\mathbb H_o^d\).

\paragraph{System operator.}
Let
\[
\mathcal S=(\mathcal S_Y,\mathcal S_X):
\mathbb H_o^d\to \mathbb H_o\times \H^*
\]
be a bounded linear map, where
\[
\mathcal S_Y:\mathbb H_o^d\to \mathbb H_o,
\qquad
\mathcal S_X:\mathbb H_o^d\to \H^*.
\]
For each admissible source $A$ we consider the associated observation-target system
\[
(Y^A,X^A)=\mathcal S(A+\xi^A).
\]
This structure is natural in settings where perturbations act prior to
the system dynamics or observation mechanism, so that covariance
domination is transported through the induced target and observation
components. For example, as we shall see, forcing terms in PDE models and latent
input perturbations in WSS/LTI systems enter prior to the action of the
associated system operator.

\medskip
\noindent\textbf{Structural features of the envelope problem.}
The extremal problem considered here differs in several structural respects from classical spectral minimax and Rayleigh–Ritz type settings.
First, the admissible covariance envelopes
\[
C_{\mathcal A}(A)
=
\{A'\in\mathcal A:\Sigma_{A'}\preceq\Sigma_A\}
\]
are generally neither convex nor closed and need not contain an extremal
element.
As a consequence, compactness and extremizer-based arguments are not
available.

Second, the covariance structure does not reduce to a single Hilbertian
geometry.
While the residual cost is naturally defined on
$L^2(\Omega;\mathbb H_o)$, the auxiliary component takes values in the
dual space $\H^*$ and interacts with reconstruction operators through
duality pairings.
Consequently, the full block structure does not reduce to a single Hilbertian quadratic form, and the analysis cannot be formulated purely through orthogonality or spectral decomposition arguments.
Third, the sources are not assumed to be centered.
As a consequence, the residual cost cannot in general be reduced to a
trace functional of the covariance operator.
Even in a Hilbertian setting, mixed source--baseline contributions prevent
identities of the form
\[
\int_\Omega
\langle A'(\omega),MA'(\omega)\rangle\,d\mu(\omega)
=
\operatorname{tr}(\Sigma_{A'}M),
\]
so covariance domination alone does not determine extremality.

	\section{Envelope extremal principle}
	\label{sec:worstrisk}

For $A'\in\mathcal V$ and $1\le i,j\le d$, define the second-order operators
\[
\Sigma_{ij}
:=
\int_\Omega A'(i)\otimes A'(j)\,d\mu
\;:\;
\mathbb H_o\to\mathbb H_o,
\]
where $(x\otimes y)u := \langle u,y\rangle_{\mathbb{H}_o}\,x$.
Collecting the blocks we define the operator
\begin{equation}\label{Kernel}
\Sigma_{A'}
:=
\begin{bmatrix}
\Sigma_{11} & \cdots & \Sigma_{1, d }\\
\vdots      & \ddots & \vdots\\
\Sigma_{ d ,1} & \cdots & \Sigma_{ d , d }
\end{bmatrix}
\;:\; \mathbb{H}_o^{d}\to\mathbb{H}_o^{d}.
\end{equation}
Given $A\in\mathcal{V}$ and $T\in \mathcal{HS}(\H^*,\mathbb{H}_o)$, we define the associated \emph{cost functional} (denoted $R_A(T)$ throughout):
\begin{align*}
    R_A(T)
    := \int_\Omega  \left\lVert Y^A-T\left(X^{A} \right)  \right\rVert_{\mathbb{H}_o}^2 d\mu 
\end{align*}
By linearity and boundedness of the components of $\mathcal S$, we have 
$$\left\lVert  T(X^A) \right\rVert_{\mathbb{H}_o} \le   \left\lVert T\right\rVert\left\lVert \mathcal{S}_{X}(A+\xi^A) \right\rVert_{\H^*},$$
and since $ \lVert T \rVert \le \left\lVert T\right\rVert_{\mathcal{HS}(\H^*,\mathbb{H}_o)} $ this ensures that $R_A(T)$ is finite and well-defined.


We now introduce the central covariance envelope associated with a
reference source.
For a given source $A$ and a class of admissible sources $\mathcal{A}$, the covariance envelope consists of all 
sources whose covariance structure is dominated, in the Loewner sense, 
by $A$.

\begin{defin}\label{ShiftDef} {\bf Covariance envelope.}
Let $A\in\mathcal{V}$ be a source and $\mathcal{A}\subseteq\mathcal{V}$ a set of sources. Define
	\begin{align}\label{ShiftSet}
		C_{\mathcal{A}}(A)=\Big\{A'\in \mathcal{A}: \langle \mathbf g,\Sigma_{A'}\mathbf g\rangle_{\mathbb{H}_o^{d}} \le \langle \mathbf g,\Sigma_{A}\mathbf g\rangle_{\mathbb{H}_o^{d}} \quad \forall g\in \mathbb{H}_o^{d}
		\Big\}.
	\end{align}
\end{defin}

This definition is the natural multivariate analogue of Mercer’s condition for covariance kernels. 
\begin{rema}
The condition $A' \in C_{\mathcal{A}}(A)$ is equivalently expressed as the operator inequality 
\[
\Sigma_{A'} \preceq \Sigma_{A},
\]
where $\preceq$ denotes the order induced by quadratic forms.
\end{rema}
\noindent
This definition also admits the following equivalent formulation, which requires verification only on a dense subset. Let $\mathcal{G}\subseteq \mathbb{H}_o$ be such that $\overline{\mathcal{G}}=\mathbb{H}_o$. Then

\begin{prop}\label{Gprop}
	\begin{align*}
		C_{\mathcal{A}}(A)=\Big\{A'\in \mathcal{A}: \langle \mathbf g,\Sigma_{A'}\mathbf g\rangle_{\mathbb{H}_o^{d}} \le \langle \mathbf g,\Sigma_{A}\mathbf g\rangle_{\mathbb{H}_o^{d}},  \quad\forall \mathbf g\in \mathcal G^d
		\Big\}.
	\end{align*}
\end{prop}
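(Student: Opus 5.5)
The plan is to prove the two inclusions separately. The inclusion ``$\subseteq$'' is immediate, since $\mathcal G^d\subseteq\mathbb H_o^d$: any $A'$ satisfying the inequality for all $\mathbf g\in\mathbb H_o^d$ satisfies it in particular on $\mathcal G^d$. All the work is in ``$\supseteq$'', which I will obtain by a density and continuity argument. The key observation is that for any $B\in\mathcal V$ the map $\mathbf g\mapsto\langle\mathbf g,\Sigma_B\mathbf g\rangle_{\mathbb H_o^d}$ is a continuous quadratic form on $\mathbb H_o^d$.

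First, I will verify that $\Sigma_B$ is a bounded self-adjoint operator. Unwinding the definition of the blocks $\Sigma_{ij}$ and of $x\otimes y$, one gets
\[
\langle \mathbf g,\Sigma_B\mathbf g\rangle_{\mathbb H_o^d}
=\sum_{i,j=1}^d\int_\Omega \langle g_i,B(i)\rangle_{\mathbb H_o}\langle B(j),g_j\rangle_{\mathbb H_o}\,d\mu
=\int_\Omega\Big(\sum_{i=1}^d\langle g_i,B(i)\rangle_{\mathbb H_o}\Big)^2d\mu .
\]
Cauchy--Schwarz then gives $\lvert\langle\mathbf g,\Sigma_B\mathbf h\rangle\rvert\le \|B\|_{\mathcal V}^2\|\mathbf g\|\,\|\mathbf h\|$, so $\Sigma_B$ is bounded with $\|\Sigma_B\|\le\|B\|_{\mathcal V}^2$. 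Here the Bochner integrability of $B(i)\otimes B(j)$ follows from $\|B(i)\otimes B(j)\|\le\|B(i)\|\|B(j)\|$ together with $B\in\mathcal V$. Boundedness implies that the quadratic form $q_B(\mathbf g)=\langle\mathbf g,\Sigma_B\mathbf g\rangle$ is continuous, via the estimate $\lvert q_B(\mathbf g)-q_B(\mathbf h)\rvert\le\|\Sigma_B\|(\|\mathbf g\|+\|\mathbf h\|)\|\mathbf g-\mathbf h\|$.

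Second, I will use density. Since $\overline{\mathcal G}=\mathbb H_o$, the set $\mathcal G^d$ is dense in $\mathbb H_o^d$ in the product norm: given $\mathbf g=(g_1,\dots,g_d)$, choose $g_i^{(n)}\in\mathcal G$ with $g_i^{(n)}\to g_i$ for each $i$. Now suppose $A'\in\mathcal A$ satisfies $q_{A'}(\mathbf g^{(n)})\le q_A(\mathbf g^{(n)})$ for every $n$. Passing to the limit using the continuity of both $q_{A'}$ and $q_A$ gives $q_{A'}(\mathbf g)\le q_A(\mathbf g)$ for all $\mathbf g\in\mathbb H_o^d$. Hence $A'\in C_{\mathcal A}(A)$, which proves ``$\supseteq$''.

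There is no real obstacle here. The only point needing care is the boundedness of $\Sigma_B$, in particular making sense of the Bochner integral defining the blocks; the explicit quadratic-form identity above settles this. If $\mathcal G$ were not a subspace, nothing changes, since only density of $\mathcal G^d$ in $\mathbb H_o^d$ is used.
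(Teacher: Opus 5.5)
Your proof is correct and follows essentially the same route as the paper: the trivial inclusion from $\mathcal G^d\subseteq\mathbb H_o^d$, then density of $\mathcal G^d$ in $\mathbb H_o^d$ combined with continuity of the quadratic forms. The only difference is that the paper works with the single bounded operator $\Sigma_A-\Sigma_{A'}$ while you use $q_A$ and $q_{A'}$ separately. Your identity $\langle\mathbf g,\Sigma_B\mathbf g\rangle=\int_\Omega\bigl(\sum_i\langle g_i,B(i)\rangle\bigr)^2\,d\mu$ and the bound $\lVert\Sigma_B\rVert\le\lVert B\rVert_{\mathcal V}^2$ supply the boundedness that the paper takes for granted. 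Your factor $\lVert\mathbf g\rVert+\lVert\mathbf h\rVert$ is also slightly more careful than the paper's estimate, which writes $\lVert\mathbf g\rVert$ where $\lVert\mathbf g_n\rVert$ should appear (harmless, since $\lVert\mathbf g_n\rVert$ is bounded).
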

\noindent Another elementary topological property of the envelope is the following.

\begin{prop}\label{closed}
	$C_{\mathcal{A}}(A)$ is closed in $\mathcal{V}$ whenever $\mathcal{A}$ is closed in $\mathcal{V}$.
\end{prop}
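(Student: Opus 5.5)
The plan is a sequential closedness argument. We take a sequence $(A_n)\subseteq C_{\mathcal A}(A)$ with $A_n\to A'$ in $\mathcal V$. Since $C_{\mathcal A}(A)\subseteq\mathcal A$ and $\mathcal A$ is closed, we get $A'\in\mathcal A$ at once. It therefore remains to show $\Sigma_{A'}\preceq\Sigma_A$, that is, $\langle \mathbf g,\Sigma_{A'}\mathbf g\rangle\le\langle \mathbf g,\Sigma_A\mathbf g\rangle$ for every $\mathbf g\in\mathbb H_o^d$. We will pass to the limit in $\langle \mathbf g,\Sigma_{A_n}\mathbf g\rangle\le\langle \mathbf g,\Sigma_A\mathbf g\rangle$.

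The key step is an explicit formula for the quadratic form of $\Sigma_{B}$, for any $B\in\mathcal V$. Using the block definition \eqref{Kernel} and $(x\otimes y)u=\langle u,y\rangle x$, we compute
\[
\langle \mathbf g,\Sigma_B\mathbf g\rangle_{\mathbb H_o^d}
=\sum_{i,j}\int_\Omega \langle B(i),g_i\rangle\langle B(j),g_j\rangle\,d\mu
=\int_\Omega \Bigl(\sum_i\langle B(i),g_i\rangle_{\mathbb H_o}\Bigr)^2 d\mu .
\]
The exchange of the Bochner integral with the inner product is justified because $u\mapsto\langle \mathbf g,u\rangle$ is a bounded functional. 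Writing $\phi_{B}(\omega):=\sum_i\langle B(\omega)(i),g_i\rangle$, Cauchy--Schwarz gives $|\phi_B|\le \|\mathbf g\|\,\|B(\omega)\|_{\mathbb H_o^d}$ pointwise. Hence $B\mapsto\phi_B$ is a bounded linear map $\mathcal V\to L^2(\Omega,\mu)$ with norm at most $\|\mathbf g\|$, and $\langle \mathbf g,\Sigma_B\mathbf g\rangle=\|\phi_B\|_{L^2(\mu)}^2$.

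It follows that $\phi_{A_n}\to\phi_{A'}$ in $L^2(\mu)$, and by continuity of the norm $\langle \mathbf g,\Sigma_{A_n}\mathbf g\rangle\to\langle \mathbf g,\Sigma_{A'}\mathbf g\rangle$. Passing to the limit in the inequality for $A_n$ yields the inequality for $A'$, so $A'\in C_{\mathcal A}(A)$. Proposition~\ref{Gprop} could alternatively be used to restrict to a dense set of test vectors, but it is not needed.

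The only mildly delicate point is the identification of the quadratic form as $\|\phi_B\|_{L^2}^2$, which requires interchanging the Bochner integral with the pairing. Finiteness comes from $B\in\mathcal V$ and the finiteness of $\mu$. Everything else is routine continuity.
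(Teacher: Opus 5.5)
Your proof is correct and follows the same route as the paper. You argue by sequential closedness, get $A'\in\mathcal A$ from closedness of $\mathcal A$, and pass to the limit in $\langle \mathbf g,\Sigma_{A_n}\mathbf g\rangle\le\langle \mathbf g,\Sigma_A\mathbf g\rangle$; your identity $\langle \mathbf g,\Sigma_B\mathbf g\rangle=\lVert\phi_B\rVert_{L^2(\mu)}^2$, with $B\mapsto\phi_B$ bounded from $\mathcal V$ to $L^2(\mu)$, explicitly justifies the ``continuity property of the covariance operator'' that the paper only invokes.
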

\noindent We next present three special cases where we characterize $C_{\mathcal{A}}(A)$ explicitly.

\begin{ex}
The covariance envelope condition reduces in finite dimensions to a matrix
inequality for the coefficient second-order operators.
Let $\{\phi_1,\ldots,\phi_n\}$ be orthonormal and let
$\mathcal{A}=\mathsf{span}\{\phi_1,\ldots,\phi_n\}$.
If $A(i)=\sum_{k=1}^n a_{i,k}\phi_k$ with $a_{i,k}\in L^2(\Omega)$, then
$C_{\mathcal{A}}(A)$ consists of sources of the form
$A'(i)=\sum_{k=1}^n a'_{i,k}\phi_k$, $1\le i\le  d $, with $a'_{i,k}\in L^2(\Omega)$,
whenever
\[
\int_\Omega \mathbf{a'}^T\mathbf{a'}\,d\mu
\ \preceq\
\int_\Omega \mathbf{a}^T\mathbf{a}\,d\mu ,
\qquad
\mathbf{a}=(a_{1,1},\ldots,a_{ d ,n}),\ \
\mathbf{a'}=(a'_{1,1},\ldots,a'_{ d ,n}).
\]
\end{ex}

\noindent\textbf{Stationary subclass (restriction to $[t_1,t_2]$).}
In this subsection we specialize to the probabilistic case $\mu=\P$ and write
$\E[\cdot]=\int_\Omega (\cdot)\,d\P$. We take $\mathbb H_o=L^2([t_1,t_2])$ and consider
$ d $-variate sources $A=(A(1),\ldots,A(d))$ with $A(i)\in L^2(\Omega;\mathbb H_o)$.

\begin{prop}[Wide-sense stationary restrictions]\label{WSS}
Assume that each \(A\in\mathcal A\subset\mathcal V\) arises as the
restriction to \([t_1,t_2]\) of a mean-zero wide-sense stationary
\(d\)-variate process on \(\mathbb R\) with covariance function
\(K_A:\mathbb R\to\mathbb R^{d\times d}\), i.e.
\[
\mathbb E\!\left[A(s)A(t)^\top\right]
=
K_A(s-t)
\qquad\text{for all }s,t\in\mathbb R,
\]
and suppose that each entry of \(K_A\) belongs to \(L^1(\mathbb R)\),
so that the matrix-valued Fourier transform
\(\widehat K_A\) is well-defined and bounded.
Then, for $A,A'\in\mathcal A$,
\[
A'\in C_{\mathcal A}(A)
\quad\Longleftrightarrow\quad
\widehat K_A(\omega)-\widehat K_{A'}(\omega)\ \text{is positive semidefinite for Lebesgue-a.e. }\omega\in\R,
\]
where $\widehat K_A(\omega)$ denotes the matrix Fourier transform of $K_A$ (taken entrywise).
Equivalently,
\[
C_{\mathcal A}(A)=\Bigl\{A'\in\mathcal A:\ \widehat K_A(\omega)-\widehat K_{A'}(\omega)\succeq 0
\ \text{for Lebesgue-a.e. }\omega\in\R\Bigr\}.
\]
\end{prop}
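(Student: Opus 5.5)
Since the sources are mean-zero, the quadratic form of $\Sigma_{A'}$ on $\mathbb H_o^d$ can be expressed through the covariance function. Fix $\mathbf g=(g_1,\dots,g_d)\in\mathbb H_o^d$ and extend each $g_i$ by zero outside $[t_1,t_2]$, so that $g_i\in L^2(\mathbb R)\cap L^1(\mathbb R)$. By definition of $\Sigma_{A'}$ and Fubini (justified since $\E\|A'(i)\|^2_{\mathbb H_o}<\infty$ and $K_{A'}$ is bounded),
\[
\langle \mathbf g,\Sigma_{A'}\mathbf g\rangle_{\mathbb H_o^d}
=\E\Bigl[\Bigl(\sum_{i}\int g_i(s)A'(i)(s)\,ds\Bigr)^2\Bigr]
=\sum_{i,j}\int\!\!\int g_i(s)\,K_{A'}^{ij}(s-t)\,g_j(t)\,ds\,dt .
\]
Since the entries of $K_{A'}$ lie in $L^1$ and $g_i\in L^2$, Plancherel together with the convolution theorem turns this into
\[
\langle \mathbf g,\Sigma_{A'}\mathbf g\rangle=\frac{1}{2\pi}\int_{\mathbb R}\widehat{\mathbf g}(\omega)^{*}\,\widehat K_{A'}(\omega)\,\widehat{\mathbf g}(\omega)\,d\omega ,
\]
up to the normalisation and conjugation convention used for the Fourier transform. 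The same identity holds for $A$. Subtracting the two identities expresses $\langle\mathbf g,(\Sigma_A-\Sigma_{A'})\mathbf g\rangle$ as the integral of $\widehat{\mathbf g}^{*}D\,\widehat{\mathbf g}$, where $D:=\widehat K_A-\widehat K_{A'}$ is a bounded Hermitian-matrix-valued function.

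\textbf{The direction ``$\Leftarrow$''} is then immediate: if $D(\omega)\succeq0$ for a.e.\ $\omega$, the integrand is nonnegative, so $\Sigma_{A'}\preceq\Sigma_A$. Note that Proposition~\ref{Gprop} is not needed here.

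\textbf{The direction ``$\Rightarrow$''} is the main obstacle. The test functions $g_i$ are supported in the fixed window $[t_1,t_2]$, while $D$ lives on all of $\mathbb R$. Positivity of the quadratic form is therefore known only on Fourier transforms of functions supported in an interval. I would try the following localisation argument. Fix a vector $v\in\mathbb C^d$, a frequency $\omega_0$, and a smooth bump $\phi$ supported in $[t_1,t_2]$. Take
\[
g_i(s)=\operatorname{Re}\bigl(v_i e^{i\omega_0 s}\bigr)\,\phi_\lambda(s),
\]
handling real and imaginary parts separately or combining them. If we scaled $\phi_\lambda(s)=\lambda^{-1/2}\phi(s/\lambda)$ with growing width, the support would leave the window, so dilation is not available. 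Instead I would appeal to a positivity property in the style of Bochner/Herglotz. The form $\mathbf g\mapsto\int\widehat{\mathbf g}^{*}D\,\widehat{\mathbf g}$ is translation invariant. Positivity for test functions supported in an interval of length $L=t_2-t_1$ means that the matrix distribution $K_A-K_{A'}$ is positive definite on $(-L,L)$. This alone does not force $D\succeq0$ in general, since the extension problem (Krein) can have non-positive extensions.

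The way out is that the sources are stationary on all of $\mathbb R$, so the hypothesis should really be usable on every window. Given $\mathbf g$ supported on a translate $[t_1+\tau,t_2+\tau]$, the stationarity of both $A$ and $A'$ makes the quadratic form translation invariant. Hence positivity transfers to all translates, but still only to intervals of length $L$. To close the gap I would interpret $\mathcal A$ as processes on $\mathbb R$ whose envelope condition is imposed for the process (equivalently, on every window, by Proposition~\ref{Gprop} with a dense class of compactly supported functions). With that interpretation, one takes $\mathbf g=v\,\psi$ with $\widehat\psi$ an approximate identity concentrating at $\omega_0$, namely $|\widehat\psi_n|^2/\|\psi_n\|^2\to\delta_{\omega_0}$. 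Then $v^{*}D(\omega_0)v\ge0$ at every Lebesgue point $\omega_0$ of $D$. Letting $v$ range over a countable dense set of $\mathbb C^d$ then gives $D\succeq0$ a.e.

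If the window truly must remain fixed, I would flag this step as the point where an additional hypothesis, or the full-line reading of the envelope, is required.
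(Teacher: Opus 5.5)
Your ``$\Leftarrow$'' direction is the paper's argument: extend by zero, apply Plancherel and the convolution theorem, and use nonnegativity of $\widehat{\mathbf g}^{*}(\widehat K_A-\widehat K_{A'})\widehat{\mathbf g}$. For ``$\Rightarrow$'' you do not prove the stated claim; you replace the fixed-window envelope by a full-line one. As a proof of the proposition this is incomplete. Your diagnosis is correct, however, and the gap cannot be closed.

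The paper's own argument breaks exactly where you predicted. It chooses a frequency bump $\psi$ with $\int\psi^2x^*(\widehat K_A-\widehat K_{A'})x\,d\omega<0$ and rewrites this via Plancherel as a form over $\mathbb R^2$ in $\check\psi$. It then sets $g_i=x_i\check\psi\,\mathbf 1_{[t_1,t_2]}$ and asserts that the form over $[t_1,t_2]^2$ is still negative. But $\psi$ is nonzero and compactly supported, so $\check\psi$ is entire and cannot vanish outside $[t_1,t_2]$ (Paley--Wiener). The truncation therefore changes the integral, and its sign is not inherited. In addition, $x_i\check\psi$ is complex-valued, while $\mathbb H_o$ is real.

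In fact ``$\Rightarrow$'' is false for a fixed window, and your Krein remark is exactly the reason. Take $d=1$, $L=t_2-t_1$ and $K_A(\tau)=(1-|\tau|/(2L))_+$. Define $K_{A'}$ by $K_{A'}=K_A$ on $[-L,L]$, $K_{A'}(\tau)=\tfrac34-|\tau|/(4L)$ for $L\le|\tau|\le3L$, and $K_{A'}=0$ beyond. Both functions are even, continuous, integrable, and convex on $[0,\infty)$ with limit $0$. By P\'olya's criterion they are therefore positive definite, so they are covariances of mean-zero Gaussian WSS processes. Only lags $|s-t|\le L$ enter the quadratic form on $L^2([t_1,t_2])$, so $\Sigma_A=\Sigma_{A'}$. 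With $\mathcal A=\{A,A'\}$, each source then lies in the other's envelope. The proposition would give both $\widehat K_A-\widehat K_{A'}\succeq0$ and $\widehat K_{A'}-\widehat K_A\succeq0$ a.e. Injectivity of the Fourier transform on $L^1$ would then force $K_A=K_{A'}$, contradicting $K_A\ne K_{A'}$ on $L<|\tau|<3L$.

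So in the stated setting only ``$\Leftarrow$'' holds. Your replacement is the correct one: require domination for test functions supported on intervals of arbitrary length, not merely translates of $[t_1,t_2]$. Your concentrating test functions then prove the equivalence. Since the entries of $K_A,K_{A'}$ are in $L^1$, the difference $\widehat K_A-\widehat K_{A'}$ is continuous, so you even get positivity at every $\omega_0$. Real-valued test functions suffice because $\widehat K(-\omega)=\overline{\widehat K(\omega)}$.
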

Thus, in the WSS/LTI setting the covariance envelope induces a pointwise
spectral domination structure in the frequency domain.
This places the extremal principle within the broader context of
robust Wiener filtering and spectral uncertainty theory.
We now state the main structural result, the \emph{Envelope extremal principle}, which shows that the supremum of the cost functional over the covariance envelope is attained at the reference source itself.
\noindent
\begin{thm}\label{WR1} {\bf Envelope extremal principle.}
If $A \in \mathcal{V}$ and $A\in\bar{\mathcal{A}}$ then
\[
\sup_{A'\in C_{\mathcal{A}}(A)} R_{A'}(T)\;=\;R_{A}(T),
\qquad\text{for every } T \in V_1.
\]
\end{thm}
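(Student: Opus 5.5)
The plan is to reduce $R_{A'}(T)$ to a functional that depends on $A'$ only through $\Sigma_{A'}$, and that is monotone in the Loewner order. Fix $T\in V_1$ and set
\[
M:=\mathcal S_Y-T\mathcal S_X:\mathbb H_o^d\to\mathbb H_o .
\]
This is a bounded operator, and by linearity $Y^{A'}-T(X^{A'})=M(A'+\xi^{A'})$. Expanding the square gives
\[
R_{A'}(T)=\int_\Omega\|MA'\|^2\,d\mu+2\int_\Omega\langle MA',M\xi^{A'}\rangle\,d\mu+\int_\Omega\|M\xi^{A'}\|^2\,d\mu .
\]
Since $M$ is merely bounded and not Hilbert--Schmidt, I would avoid a formal trace identity and work with an orthonormal basis $(e_k)$ of $\mathbb H_o$, writing $g_k:=M^*e_k\in\mathbb H_o^d$. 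By Parseval and monotone convergence,
\[
\int_\Omega\|MA'\|^2\,d\mu=\sum_k\int_\Omega\langle A',g_k\rangle^2\,d\mu=\sum_k\langle g_k,\Sigma_{A'}g_k\rangle_{\mathbb H_o^d}.
\]
The same computation gives $\int_\Omega\|M\xi^{A'}\|^2\,d\mu=\sum_k\langle g_k,\Sigma_\xi g_k\rangle$, which is a constant independent of $A'$.

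For the cross term I would write it as $\sum_k\int_\Omega\langle A',g_k\rangle\langle\xi^{A'},g_k\rangle\,d\mu$. The interchange of sum and integral is justified by dominated convergence, because Cauchy--Schwarz in $k$ bounds the partial sums by $\|MA'\|\,\|M\xi^{A'}\|\in L^1(\mu)$. Each summand equals $\langle g_k,(\int_\Omega A'\otimes\xi^{A'}\,d\mu)\,g_k\rangle=0$ by the orthogonality assumption. Hence, for every source carrying a baseline with the stated properties,
\[
R_{A'}(T)=\sum_k\langle g_k,\Sigma_{A'}g_k\rangle+\sum_k\langle g_k,\Sigma_\xi g_k\rangle .
\]
Applying the defining inequality of $C_{\mathcal A}(A)$ termwise with $\mathbf g=g_k$ and summing yields $R_{A'}(T)\le R_A(T)$ for all $A'\in C_{\mathcal A}(A)$. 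This is the upper bound. It is uniform in $T$, and it also shows the mixed terms emphasised in the introduction vanish only after integration, through the covariance-level orthogonality.

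For the reverse inequality there are two cases. If $A\in\mathcal A$, then $A\in C_{\mathcal A}(A)$ trivially and the supremum is attained. If $A\in\overline{\mathcal A}\setminus\mathcal A$, I would use continuity. The map $A'\mapsto\int_\Omega\|MA'\|^2\,d\mu$ is a continuous quadratic form on $\mathcal V$ (it is bounded by $\|M\|^2\|A'\|_{\mathcal V}^2$). So it suffices to produce $A_n\in C_{\mathcal A}(A)$ with $A_n\to A$ in $\mathcal V$; then $R_{A_n}(T)\to R_A(T)$, since the baseline contribution is the same constant for every $A_n$.

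This is the step I expect to be the main obstacle. Closure of $\mathcal A$ alone gives $A_n\in\mathcal A$ converging to $A$, but not $\Sigma_{A_n}\preceq\Sigma_A$. My first attempt would be to shrink the approximants, e.g.\ replace $A_n$ by $(1-\varepsilon_n)A_n$. This produces elements $\varepsilon$-close to $A$ whose covariances sit below $\Sigma_A$ once $\|\Sigma_{A_n}-\Sigma_A\|$ is small relative to $\varepsilon_n\Sigma_A$. That requires $\mathcal A$ to be stable under such contractions and some control of $\Sigma_{A_n}$ against $\Sigma_A$ (for instance $\Sigma_{A_n}\preceq(1+\delta_n)\Sigma_A$). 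If no such structure is available, I would read the closure hypothesis as $A\in\overline{C_{\mathcal A}(A)}$, which is what the argument actually uses. With that reading the lower bound follows directly from the continuity just described, and combined with the upper bound it gives the claimed identity.
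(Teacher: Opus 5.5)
Your upper bound is correct and takes a genuinely different, much shorter route than the paper. With $M=\mathcal S_Y-T\mathcal S_X$ and $g_k=M^*e_k$, Parseval together with monotone and dominated convergence gives $R_{A'}(T)=\operatorname{tr}(M\Sigma_{A'}M^*)+\operatorname{tr}(M\Sigma_\xi M^*)$. Loewner monotonicity then yields $R_{A'}(T)\le R_A(T)$ directly, for every $A'\in C_{\mathcal A}(A)$.

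The paper obtains the same inequality only in a limit. It expands everything in coefficients and introduces truncation matrices $B^n$ with error terms $\delta_n$. It then tests the envelope inequality on vectors $g^{(N,k)}$, which are essentially truncations of your $M^*e_k$ built from a truncated $T$. Finally it needs the finite families $C_m$ to make the passage $N\to\infty$ uniform.

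Your formula makes Steps 1--4 unnecessary and reduces the continuity estimate of Step 6 to one line. It also shows that, under the covariance-level orthogonality, the cost \emph{is} a Loewner-monotone trace functional of $\Sigma_{A'}$ plus a constant. One implicit assumption is shared by both arguments: the reference $A$ itself must carry a baseline $\xi^A$ with $\int_\Omega A\otimes\xi^A\,d\mu=0$ and $\int_\Omega\xi^A\otimes\xi^A\,d\mu=\Sigma_\xi$. The standing assumption guarantees this only for admissible sources.

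For the lower bound your diagnosis is right, and the missing step cannot be supplied, because under $A\in\overline{\mathcal A}$ alone the statement is false. Here is a counterexample:
\begin{itemize}
\item take $d=1$ and $\xi^{A'}\equiv 0$ for all sources, so $\Sigma_\xi=0$ and the orthogonality condition holds trivially;
\item take $\mathcal S_Y=I$ and any bounded $\mathcal S_X$;
\item fix some $A\in\mathcal V$ with $A\neq 0$, and set $\mathcal A=\{0\}\cup\{(1+1/n)A:n\in\N\}$.
\end{itemize}
Then $A\in\overline{\mathcal A}$. However, $\Sigma_{(1+1/n)A}=(1+1/n)^2\Sigma_A\not\preceq\Sigma_A$, since $\operatorname{tr}\Sigma_A=\lVert A\rVert_{\mathcal V}^2>0$. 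Hence $C_{\mathcal A}(A)=\{0\}$ and the supremum equals $R_0(T)=0$, while $R_A(0)=\lVert A\rVert_{\mathcal V}^2>0$.

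The paper's proof fails at precisely the point you isolated. In Step 7 it chooses $\tilde A_\Delta\in C_{\mathcal A}(A)$ within $\eta$ of $A$ and justifies this by $A\in\overline{\mathcal A}$. That hypothesis only yields approximants in $\mathcal A$, not in the envelope. The proof also starts from a maximizing sequence in $C_{\mathcal A}(A)$, tacitly assuming the envelope is nonempty.

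So no contraction trick can rescue the stated hypothesis in general. The right hypothesis is the one you propose, $A\in\overline{C_{\mathcal A}(A)}$ (for instance $A\in\mathcal A$). Under it, your proposal is a complete proof of the identity and considerably simpler than the paper's.
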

Without such an extremal reduction, the corresponding
minimax problem would require optimization over an entire covariance envelope,
which is typically nonclosed, nonconvex, and infinite-dimensional.
The extremal value depends only on the closure of the admissible class:
replacing \(\mathcal A\) by \(\overline{\mathcal A}\) leaves the supremum
unchanged.
\begin{corollary}
If $A \in \mathcal{V}$ and $A\in \bar{\mathcal{A}}$, then
\[
 \sup_{A'\in C_{\mathcal{A}}(A)} R_{A'}(T)
 \;=\;
 \sup_{A'\in C_{\bar{\mathcal{A}}}(A)} R_{A'}(T).
\]
\end{corollary}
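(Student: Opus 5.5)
The plan is to reduce $R_{A'}(T)$ to trace expressions by introducing the bounded operator
\[
L_T:=\mathcal S_Y-T\mathcal S_X:\mathbb H_o^d\to\mathbb H_o ,
\]
which is bounded because $\mathcal S$ is bounded and $\lVert T\rVert\le\lVert T\rVert_{\mathcal{HS}}$. By linearity, $Y^{A'}-T(X^{A'})=L_T(A'+\xi^{A'})$. Expanding the square gives
\[
R_{A'}(T)=\int_\Omega\lVert L_TA'\rVert^2d\mu+2\int_\Omega\langle L_TA',L_T\xi^{A'}\rangle d\mu+\int_\Omega\lVert L_T\xi^{A'}\rVert^2d\mu .
\]

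First I will establish the identity
\[
\int_\Omega\langle L_TU,L_TW\rangle\,d\mu=\operatorname{tr}\Bigl(L_T\Bigl(\int_\Omega U\otimes W\,d\mu\Bigr)L_T^*\Bigr)
\qquad\text{for }U,W\in\mathcal V .
\]
The argument runs as follows. The rank-one integrand $L_TU\otimes L_TW=L_T(U\otimes W)L_T^*$ has trace norm $\lVert L_TU\rVert\,\lVert L_TW\rVert$, which is integrable by Cauchy--Schwarz. Hence it is Bochner integrable in the trace class $\mathcal S_1(\mathbb H_o)$. Both the trace and the map $K\mapsto L_TKL_T^*$ are continuous there, so they commute with the integral. (Only boundedness of $L_T$ is used, not any Hilbert--Schmidt property.)

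Applying this identity term by term:
\begin{itemize}
\item the mixed term vanishes because $\int A'\otimes\xi^{A'}d\mu=0$;
\item the baseline term equals $\operatorname{tr}(L_T\Sigma_\xi L_T^*)$, which is independent of $A'$;
\item the source term equals $\operatorname{tr}(L_T\Sigma_{A'}L_T^*)=\sum_k\langle L_T^*e_k,\Sigma_{A'}L_T^*e_k\rangle$ for an orthonormal basis $(e_k)$ of $\mathbb H_o$.
\end{itemize}
Therefore
\[
R_{A'}(T)=\operatorname{tr}(L_T\Sigma_{A'}L_T^*)+\operatorname{tr}(L_T\Sigma_\xi L_T^*)\quad\text{for every admissible }A'.
\]
The same formula holds for $A$, since $A\in\mathcal V$ carries a baseline component with the two covariance constraints. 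I will make this explicit for $A\in\overline{\mathcal A}\setminus\mathcal A$.

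The upper bound then follows termwise. For $A'\in C_{\mathcal A}(A)$, the Loewner inequality $\Sigma_{A'}\preceq\Sigma_A$ gives $\langle L_T^*e_k,\Sigma_{A'}L_T^*e_k\rangle\le\langle L_T^*e_k,\Sigma_AL_T^*e_k\rangle$ for every $k$. Summing over $k$ yields $R_{A'}(T)\le R_A(T)$, so $\sup_{C_{\mathcal A}(A)}R_{A'}(T)\le R_A(T)$. The non-centredness of the sources causes no difficulty here: it is fully absorbed by the fact that $\Sigma_{A'}$ is the uncentered second-moment operator.

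For the reverse inequality, the case $A\in\mathcal A$ is trivial, since then $A\in C_{\mathcal A}(A)$. When $A$ lies only in the closure, I will use continuity. If $A_n\to A$ in $\mathcal V$, then $\Sigma_{A_n}\to\Sigma_A$ in trace norm, because
\[
\Bigl\lVert\int (A_n\otimes A_n-A\otimes A)\,d\mu\Bigr\rVert_{\mathcal S_1}\le(\lVert A_n\rVert_{\mathcal V}+\lVert A\rVert_{\mathcal V})\lVert A_n-A\rVert_{\mathcal V}.
\]
Consequently $R_{A_n}(T)\to R_A(T)$.

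\textbf{Main obstacle.} The approximants $A_n$ must lie in the envelope, i.e.\ satisfy $\Sigma_{A_n}\preceq\Sigma_A$, and mere norm convergence does not guarantee this. My first attempt will be to replace $A_n$ by a damped version $t_nA_n$ with $t_n\uparrow1$, chosen so that $t_n^2\Sigma_{A_n}\preceq\Sigma_A$. If this can be arranged within $\mathcal A$, then $R_{t_nA_n}(T)=t_n^2\operatorname{tr}(L_T\Sigma_{A_n}L_T^*)+\operatorname{tr}(L_T\Sigma_\xi L_T^*)\to R_A(T)$. Failing that, I will select the approximating sequence directly from $\{A'\in\mathcal A:\Sigma_{A'}\preceq\Sigma_A\}$, using the structure of $\mathcal A$. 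Verifying this admissibility of envelope approximants is where I expect the real work, and any additional hypothesis needed on $\mathcal A$, to lie.
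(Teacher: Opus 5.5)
Your trace reduction is correct, and it reaches the upper bound far more directly than the paper's coordinate truncation. With $L_T=\mathcal S_Y-T\mathcal S_X$ you get $R_{A'}(T)=\operatorname{tr}(L_T\Sigma_{A'}L_T^*)+\operatorname{tr}(L_T\Sigma_\xi L_T^*)$, hence $R_{A'}(T)\le R_A(T)$ on $C_{\bar{\mathcal A}}(A)$. Since $A\in C_{\bar{\mathcal A}}(A)$, the right-hand side of the corollary equals $R_A(T)$, and your trace-norm estimate gives continuity in the source.

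The genuine gap is the one you flag yourself. Nothing shows that the left-hand supremum reaches $R_A(T)$ when $A\in\bar{\mathcal A}\setminus\mathcal A$, and no argument can, because under the stated hypotheses the claim is false. Let $\mu$ be a probability measure, $d=1$, $\mathbb H_o=\mathsf H=\mathbb R$ and $\mathcal S u=(u,u)$. Let $\varepsilon_1,\varepsilon_2$ be independent standard normal variables, $A=\varepsilon_1$, $\xi^{A'}=\varepsilon_2$ for every source $A'$, and
\[
\mathcal A=\{0\}\cup\bigl\{(1+\tfrac1n)A:\ n\ge1\bigr\}.
\]
Then $\bar{\mathcal A}=\mathcal A\cup\{A\}$, $C_{\mathcal A}(A)=\{0\}$ and $C_{\bar{\mathcal A}}(A)=\{0,A\}$. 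For $T=0$ the left side equals $R_0(0)=1$ and the right side equals $R_A(0)=2$.

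Your damping device has nothing to work with here, since the only element of $\mathcal A$ dominated by $A$ is $0$. Closure of $\mathcal A$ under scaling does not help either. Take $\mathbb H_o=\mathbb R^2$, a further independent standard normal $\varepsilon_3$, $\xi^{A'}=\varepsilon_2e_1$, $A=\varepsilon_1e_1$ and $A_n=\varepsilon_1e_1+n^{-1}\varepsilon_3e_2$. Then $\Sigma_A=\operatorname{diag}(1,0)$ and $\Sigma_{A_n}=\operatorname{diag}(1,n^{-2})$, so $t^2\Sigma_{A_n}\preceq\Sigma_A$ forces $t=0$. The cone $\mathcal A=\{cA_n:\ c\ge0,\ n\ge1\}$ again has $C_{\mathcal A}(A)=\{0\}$ while $A\in\bar{\mathcal A}$.

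The paper obtains the corollary by applying Theorem~\ref{WR1} to $\mathcal A$ and to $\bar{\mathcal A}$, and its proof of that theorem has exactly the hole you isolated. In Step~7 it picks $\tilde A_\Delta\in C_{\mathcal A}(A)$ with $\lVert\tilde A_\Delta-A\rVert_{\mathcal V}<\eta$, citing only $A\in\bar{\mathcal A}$. But approximability from within $\mathcal A$ says nothing about approximability from within the envelope. So do not look for a cleverer selection; an extra hypothesis is needed.

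What your argument does prove is $\sup_{C_{\mathcal A}(A)}R_{A'}(T)=\sup_{\overline{C_{\mathcal A}(A)}}R_{A'}(T)$. Here $\overline{C_{\mathcal A}(A)}\subseteq C_{\bar{\mathcal A}}(A)$, and the inclusion can be strict, as above. If one assumes $A\in\overline{C_{\mathcal A}(A)}$ (closure in $\mathcal V$), your continuity estimate yields $\sup_{C_{\mathcal A}(A)}R_{A'}(T)\ge R_A(T)$, and both sides of the corollary equal $R_A(T)$.

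Finally, the existence of baseline components satisfying both covariance constraints for the points of $C_{\bar{\mathcal A}}(A)\setminus\mathcal A$ (in particular for $A$) must be a hypothesis, not something you derive. Passing to a weak limit of $\xi^{A_n}$ preserves $\int_\Omega A\otimes\xi\,d\mu=0$, but in general only gives $\int_\Omega\xi\otimes\xi\,d\mu\preceq\Sigma_\xi$.
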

The corollary ensures that working with explicit descriptions of $C_{\mathcal{A}}(A)$ 
is sufficient, even though the closure $C_{\bar{\mathcal{A}}}(A)$ 
may be analytically intractable. 

\subsection{Overview of the proof of Theorem~\ref{WR1}}
The first step is to expand the target $Y^A$ in an orthonormal basis 
$\{\phi_k\}_{k}$ of $\mathbb{H}_o$, and to use the fact that $\mathcal{HS}(\H^*,\mathbb{H}_o)\cong \mathbb{H}_o \widehat{\otimes}\H$ to identify an element $\beta_T\in \mathbb{H}_o \widehat{\otimes}\H$ through the canonical isometric isomorphism. We can then represent 
$\beta_T$ in the product basis $\{\phi_k\otimes \psi_l\}_{k,l}$, 
where $\{\psi_l\}_{l}$ is an orthonormal basis of $\H$. 
Next, the dual pairing between $T$ and $X^A$ is expanded in the 
$\{\phi_k\}$–basis, so that the entire residual is expressed consistently 
in terms of these coordinates. 
This yields an explicit coefficient representation of the cost functional.
\\
The next step is to approximate the infinite–dimensional functional system 
by a finite–dimensional truncation. Concretely, we introduce matrices $B^n$ 
encoding the finite–dimensional action of $\mathcal{S}$ relative to the chosen ON–basis. 
Since S is not assumed compact, the truncations are handled through pointwise approximation rather than operator-norm convergence, and the resulting approximation error must therefore be controlled in the limiting procedure.
\\
Next, the finite–dimensional approximation is substituted into the expansion 
of the cost functional, and the pointwise error is controlled by a detailed 
estimate. This reduction yields a finite–dimensional quadratic form expressed 
in terms of the Fourier coefficients of the input sources together with the 
corresponding baseline component terms.

In the resulting quadratic form we show that the mixed source--baseline
terms vanish, while the pure baseline contribution is invariant.
This structure yields continuity of the cost functional with respect to
the source component. Owing to the preceding expansions, the dependence on the source and the
baseline component separates.

The orthogonality of the mixed terms together with the fixed second-order
structure of the baseline component implies that differences of cost
functionals depend only on source-dependent contributions.
This removes all baseline dependence from the envelope comparison and
reduces the extremal problem to quadratic forms in the source
coefficients.

We now proceed to the envelope maximization problem and
decompose the final step into several substeps.
\begin{itemize}
\item[\textbf{(a)}] \textit{Finite approximants inside the envelope.}
Fix $\Delta>0$. Using the continuity property established in the previous step, pick $A_\Delta\in C_{\mathcal{A}}(A)$ with
$\mathrm{dist}(A_\Delta,\,A)\le\Delta$.
Construct finite sets $C_m\subset C_{\mathcal{A}}(A)$ such that
(i) $\max_{A''\in C_m} R_{A''}(T)\to \sup_{A'\in C_{\mathcal{A}}(A)} R_{A'}(T)$ as $m\to\infty$, and
(ii) $A_\Delta\in C_m$ for all $m$. Since each $C_m$ is finite, the convergences used below are uniform over $C_m$.
The envelope inequality, although formulated in infinite dimension,
passes to finite-dimensional compressions of the covariance kernels,
thereby allowing the approximation machinery to apply.

\item[\textbf{(b)}] \textit{Finite-dimensional compression of kernels.}
Let $P_N:\mathbb H_o\to\mathbb H_o$ be the orthogonal projection onto
$\mathsf{span}\{\phi_1,\ldots,\phi_N\}$ and extend it componentwise to
$\mathbb H_o^{d}$ by
\[
\mathbf P_N := \mathrm{diag}(P_N,\ldots,P_N)\,:\,\mathbb H_o^{d}\to \mathbb H_o^{d}.
\]
For each $A''\in\overline{\mathcal A}$ define the compressed kernel by
$
\Sigma^{(N)}_{A''}:=\mathbf P_N\,\Sigma_{A''}\,\mathbf P_N$.
Then for every $\mathbf g\in (\mathsf{span}\{\phi_1,\ldots,\phi_N\})^{d}$ we have
$
\langle \mathbf g,\Sigma_{A''}\mathbf g\rangle_{\mathbb H_o^{d}}
=
\langle \mathbf g,\Sigma^{(N)}_{A''}\mathbf g\rangle_{\mathbb H_o^{d}},
$
since $\mathbf P_N\mathbf g=\mathbf g$. Hence the compression does not alter the
quadratic form on the finite-dimensional test space.

\item[\textbf{(c)}] \textit{Transfer of the envelope inequality to the truncated level.}
By the envelope definition,
$\langle \Sigma_{A''}\mathbf g,\mathbf g\rangle \le \,\langle \Sigma_A \mathbf g,\mathbf g\rangle$ for all $\mathbf g$.
For $\mathbf g\in\mathsf{span}\{\phi_1,\ldots,\phi_N\}$ the identities in (b) give
\[
\langle \Sigma^{(N)}_{A''}\mathbf g,\mathbf g\rangle_{\mathbb{H}_o^{d}}
=\langle \Sigma_{A''}\mathbf g,\mathbf g\rangle_{\mathbb{H}_o^{d}}
\le \,\langle \Sigma_A \mathbf g,\mathbf g\rangle_{\mathbb{H}_o^{d}}
= \,\langle \Sigma_A^{(N)} \mathbf g,\mathbf g\rangle_{\mathbb{H}_o^{d}},
\]
i.e. the envelope condition \emph{passes} to the compressed kernels on the finite subspace.

\item[\textbf{(d)}] \textit{Upper bound via truncation and limits.}
Applying (c) with test functions $\mathbf g$ determined by the finite
expansion of the truncated cost functional,
we obtain
$
\max_{A''\in C_m} R_{A''}^{(N)}(T) \;\le\; \,R_A^{(N)}(T).
$
Letting $N\to\infty$ and using the previously established convergence estimates,
together with uniformity over the finite set $C_m$, we then obtain
$
\max_{A''\in C_m} R_{A''}(T) \;\le\; R_{A}(T)+\Delta.
$
\item[\textbf{(e)}] \textit{Lower bound via a fixed near–candidate and passage $\Delta\to 0$.}
Since $A_\Delta\in C_m$ for all $m$,
$
\max_{A''\in C_m} R_{A''}(T) \;\ge\; R_{A_\Delta}(T).
$
Letting $m\to\infty$ gives
$\sup_{A'\in C_{\mathcal{A}}(A)} R_{A'}(T)\ge R_{A_\Delta}(T)$.
Finally, by source continuity (established earlier) and $\mathrm{dist}(A_\Delta,A)\le\Delta$,
$R_{A_\Delta}(T)\to R_{A}(T)$ as $\Delta\to 0$.
Combining with (d) yields
$
\sup_{A'\in C_{\mathcal{A}}(A)} R_{A'}(T) \;=\; R_{A}(T).
$

\end{itemize}

	\section{Envelope extremal minimization}
	\label{sec:minimizer}

We study the variational problem of minimizing the \emph{extremal (envelope) cost}
\[
\arg\min_{T\in \mathcal{HS}(\H^*,\mathbb{H}_o)}\ \sup_{A'\in C_{\mathcal A}(A)} R_{A'}(T),
\]
where $\arg\min$ denotes the (possibly empty) set of minimizers. Our goal is to characterize this set
and to give conditions for non-emptiness and uniqueness. We refer to this problem as
\emph{envelope extremal minimization}.
The minimization problem identifies reconstruction operators that are
optimal over the entire covariance envelope.
The resulting first-order optimality condition takes the form of an
operator-valued normal equation.

Fix $A\in\mathcal{V}$ and let $\mathcal{H}=L^2(\Omega;\mathbb{H}_o)$. Define the bounded linear operator
\[
\Gamma:\ \mathcal{HS}(\H^*,\mathbb{H}_o)\longrightarrow \mathcal{H},\qquad
\Gamma T :=\ T(X^A)  \in \mathbb{H}_o,
\]
interpreting the right-hand side in $\mathcal{H}$.
With this notation,
\[
R_{A}(T)\ =\ \norm{\Gamma T - Y^{A}}_{\mathcal{H}}^{2},\qquad T\in \mathcal{HS}(\H^*,\mathbb{H}_o).
\]
Write $J_{\mathcal{H}}:\mathcal{H}\to \mathcal{H}^{\ast}$ for the Riesz isomorphism, $J_{\mathcal{H}}(u)(v)=\langle u,v \rangle_{\mathcal{H}}$, and let $\Gamma^{\ast}:\mathcal{H}^{\ast}\to \mathcal{HS}(\H^*,\mathbb{H}_o)^{\ast}$ be the (Banach) adjoint, $\Gamma^{\ast}\ell:=\ell\circ\Gamma$. Set $V_1=\mathcal{HS}(\H^*,\mathbb{H}_o)$
\[
C_{XX}\ :=\ \Gamma^{\ast}J_{\mathcal{H}}\Gamma\ \in \mathcal{L}(V_1,(V_1)^{\ast}),
\qquad
C_{XY}\ :=\ \Gamma^{\ast}J_{\mathcal{H}}Y^A\ \in (V_1)^{\ast}.
\]

\begin{thm}[Extremal cost minimizer]\label{minim}
\begin{enumerate}
\item[(i)] \emph{First-order optimality / normal equation.}
There exists at least one solution $T^*\in V_1$,
\begin{equation}
T^*=\arg\min_{T\in V_1}\sup_{A'\in C_{\mathcal{A}}(A)} R_{A'}(T),
\end{equation}
if and only if
\begin{equation}
C_{XX}T^{\ast}=C_{XY}\quad\text{in }(V_1)^{\ast}.
\end{equation}
Equivalently, a minimizer exists iff $C_{XY}\in\ran(C_{XX})\subset (V_1)^{\ast}$.

\item[(ii)] \emph{Structure/uniqueness.}
If $T_0$ is any solution of $C_{XX}T=C_{XY}$, then the set of all minimizers is the affine space
\begin{equation}
T_0+\Ker(C_{XX}).
\end{equation}
Uniqueness holds precisely when $\Ker(\Gamma)=\{0\}$.
\end{enumerate}
\end{thm}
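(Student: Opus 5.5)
The plan is to use Theorem~\ref{WR1} to remove the supremum, and then treat the problem as a standard least-squares problem in Hilbert space. Since $A\in\mathcal V$, and presumably $A\in\overline{\mathcal A}$ as in the standing setting of this section, Theorem~\ref{WR1} gives, for every $T\in V_1$,
\[
\sup_{A'\in C_{\mathcal A}(A)}R_{A'}(T)=R_A(T)=\norm{\Gamma T-Y^A}_{\mathcal H}^2 .
\]
The minimax problem therefore becomes the minimization of the convex, continuous quadratic functional $F(T):=\norm{\Gamma T-Y^A}_{\mathcal H}^2$ over the Hilbert space $V_1$. Before proceeding I will check that $\Gamma$ is bounded and linear: from $\norm{T(X^A)}\le\norm{T}_{\mathcal{HS}}\norm{X^A}_{\H^*}$ and $X^A\in L^2(\Omega;\H^*)$ we get $\norm{\Gamma T}_{\mathcal H}\le \norm{T}_{\mathcal{HS}}\norm{X^A}_{L^2}$.

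\textbf{Part (i).} Expanding the square gives, for $T,S\in V_1$ and $t\in\R$,
\[
F(T+tS)=F(T)+2t\,\langle \Gamma T-Y^A,\Gamma S\rangle_{\mathcal H}+t^2\norm{\Gamma S}_{\mathcal H}^2 .
\]
In terms of the operators introduced above, the linear term equals $2t\,(C_{XX}T-C_{XY})(S)$, because $(\Gamma^*J_{\mathcal H}u)(S)=\langle u,\Gamma S\rangle_{\mathcal H}$. Both directions then follow. If $T^*$ is a minimizer, then $t\mapsto F(T^*+tS)$ has a minimum at $t=0$, so the linear coefficient vanishes for every $S$; this is exactly $C_{XX}T^*=C_{XY}$ in $V_1^*$. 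Conversely, if the normal equation holds, the identity with $t=1$ gives $F(T^*+S)=F(T^*)+\norm{\Gamma S}^2\ge F(T^*)$ for all $S$, so $T^*$ is a global minimizer. The reformulation in terms of $\ran(C_{XX})$ is then immediate.

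\textbf{Part (ii).} If $T_0$ solves the normal equation, the same identity gives $F(T_0+S)=F(T_0)$ if and only if $\Gamma S=0$. Hence the set of minimizers is $T_0+\Ker(\Gamma)$. It remains to show $\Ker(C_{XX})=\Ker(\Gamma)$:
\begin{itemize}
\item if $\Gamma S=0$, then clearly $C_{XX}S=0$;
\item if $C_{XX}S=0$, then $0=(C_{XX}S)(S)=\norm{\Gamma S}^2$, so $\Gamma S=0$.
\end{itemize}
Uniqueness therefore holds exactly when $\Ker(\Gamma)=\{0\}$, under the standing assumption that a minimizer exists.

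The only substantive obstacle is the first reduction. It depends on the hypothesis $A\in\overline{\mathcal A}$ being available, so that Theorem~\ref{WR1} applies, and on Theorem~\ref{WR1} holding uniformly in $T$, which it does. A secondary point is to state the existence claim carefully: existence is not automatic, because $\ran(\Gamma)$ need not be closed. For that reason it is phrased as the range condition $C_{XY}\in\ran(C_{XX})$, equivalently that the orthogonal projection of $Y^A$ onto $\overline{\ran\Gamma}$ lies in $\ran\Gamma$. The rest is routine Hilbert-space calculus.
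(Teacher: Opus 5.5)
Your proposal is correct and follows essentially the same route as the paper. You remove the supremum via Theorem~\ref{WR1} to get $R_A(T)=\norm{\Gamma T-Y^A}_{\mathcal H}^2$, identify the first variation with $2(C_{XX}T-C_{XY})$, and use the exact identity $R_A(T^*+h)-R_A(T^*)=\norm{\Gamma h}_{\mathcal H}^2$ for both sufficiency and the structure of the minimizer set. The only differences are minor: your one-parameter expansion replaces the paper's Fr\'echet chain rule, you supply the short argument $(C_{XX}S)(S)=\norm{\Gamma S}_{\mathcal H}^2$ for $\Ker(C_{XX})=\Ker(\Gamma)$ that the paper only asserts, and you rightly note that the hypothesis $A\in\overline{\mathcal A}$ needed for Theorem~\ref{WR1} is left implicit in the statement.
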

\noindent Operator-valued normal equations of this type are classical in linear
estimation and filtering theory; see, for example,
Kailath, Sayed, and Hassibi~\cite{KailathSayedHassibi2000}.
The novelty in the present setting lies in the covariance-envelope
extremal structure underlying the minimization problem.
\medskip
\noindent If we now write the cost functional as
\[
R_{A}(T)\;=\;C_{A}\ -\ 2\,\mathcal L_{A}(T)\ +\ \mathcal Q_{A}(T,T),
\qquad T\in V_1,
\]
where $C_{A}=\int_\Omega\lVert Y^{A}\rVert_{\mathbb{H}_o}^2 d\mu$,
$\ \mathcal L_{A}:V_1\to\R$ is the continuous linear functional
\[
\mathcal L_{A}(T)\;:=\; \!\int_\Omega \langle
Y^{A},T(X^{A})\rangle_{\mathbb{H}_o}d\mu ,
\]
and $\ \mathcal Q_{A}:V_1\times V_1\to\R$ is the continuous, symmetric, positive semidefinite bilinear form
\[
\mathcal Q_{A}(T_1,T_2)\;:=\int_\Omega 
\langle T_1(X^A),T_2(X^A)\rangle_{\mathbb{H}_o}d\mu .
\]
Let $\{\phi_k\}_{k\ge1}$ be an ON–basis of $\mathbb{H}_o$ and $\{\psi_\ell\}_{\ell\ge1}$ an ON–basis of $\H$.
Define the (infinite) coefficient vectors $v=\big(v_{k,\ell}\big)_{(k,\ell)}\in\ell^2$,
$$v_{k,\ell}=\left\langle T(\psi_l),\phi_k\right\rangle_{\mathbb{H}_o}$$
and
\[
b_{k,\ell}\ :=\int_\Omega \left\langle Y^{A}\,
,\phi_k\right\rangle_{\mathbb{H}_o} \langle \psi_\ell, X^{A}\rangle_{\H,\H^*}d\mu ,
\]
together with the positive semidefinite operator $\Sigma:\ell^2\to\ell^2$ with entries
\[
\Sigma^{(k,\ell)\,(k',\ell')}\ :=\delta_{k,k'}\int_\Omega
\langle \psi_\ell, X^{A}\rangle_{\H,\H^*}
\langle \psi_{\ell'}, X^{A}\rangle_{\H,\H^*} d\mu ,
\]
here the infinite matrix $\Sigma$ acts on vectors in $\ell^2$ through left-multiplication. 
\\
\\
The following corollary provides an explicit coordinate representation
of the minimizer in a fixed orthonormal basis.
Under coercivity, the coefficient sequence belongs to $\ell^2$ and the
corresponding expansion converges in $V_1$.


\begin{corollary}[Coordinate form and summability for the minimizer]\label{cor:coord-min}
\noindent{(i) \bf Coercive case.}
If there exists $c>0$ such that
$$\mathcal Q_{A}(T,T) \ge c\lVert T\rVert_{V_1}^2\qquad\forall T\in V_1$$
(coercivity of the $XX$–block), then there exists a unique solution $ T^*\in V_1$,
$$T^*=\arg\min_{T\in V_1}\sup_{A'\in C_{\mathcal{A}}(A)} R_{A'}(T),$$
characterized by the normal equation
\[
\mathcal Q_{A}(T_A^\star,\eta)\ =\ \mathcal L_{A}(\eta)\qquad \forall\,\eta\in V_1,
\]
and with the bound $\lVert T_{A}^\star\rVert_{V_1}\le c^{-1}\lVert\mathcal L_{A}\rVert_{(V_1)^*}$.

\smallskip

\noindent{(ii) \bf Coordinate representation.}
Under the coercivity assumption above (which is equivalent to $\Sigma\succeq c\,I$ on $\ell^2$),
the unique minimizer has coefficient vector $v^\star=\Sigma^{-1}b\in\ell^2$, hence
\[
\sum_{k,\ell\ge1} \big|v_{k,\ell}\big|^2\ <\ \infty.
\]
Moreover, for each $x\in\H^*$,
\[
T_A(x)=\sum_{k,\ell\ge1} v_{k,\ell}\,\langle \psi_\ell,x\rangle_{\H,\H^*}\,\phi_k,
\]
where the series converges in $\mathbb{H}_o$.

\smallskip
\noindent{(iii) \bf Degenerate case.}
If coercivity fails but $\mathcal L_{A}$ belongs to the closure of the range of the operator induced by $\mathcal Q_{A}$,
then minimizers exist and are characterized by the normal equation above; among them, the minimal
$\lVert\cdot\rVert_{V_1}$–norm solution corresponds to the $\ell^2$–minimal solution
$v^\star=\Sigma^\dagger b$ (Moore–Penrose) and still satisfies $\ v^\star\in\ell^2$.
\end{corollary}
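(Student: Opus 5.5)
By Theorem~\ref{WR1} (with the standing assumption $A\in\overline{\mathcal A}$), $\sup_{A'\in C_{\mathcal A}(A)}R_{A'}(T)=R_A(T)$ for every $T\in V_1$. The envelope minimization problem is therefore exactly the unconstrained minimization of the quadratic functional $R_A(T)=C_A-2\mathcal L_A(T)+\mathcal Q_A(T,T)$ on the Hilbert space $V_1$. The plan is to treat all three parts as statements about this quadratic functional, first intrinsically on $V_1$ and then in coordinates via the isometry $V_1\cong\ell^2$.

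\emph{Coercive case (i).} If $\mathcal Q_A(T,T)\ge c\lVert T\rVert_{V_1}^2$, then $\mathcal Q_A$ is a bounded, symmetric, coercive bilinear form on $V_1$, and $\mathcal L_A$ is continuous since $\lvert\mathcal L_A(T)\rvert\le \lVert Y^A\rVert_{\mathcal H}\lVert\Gamma\rVert\lVert T\rVert_{V_1}$.
\begin{itemize}
\item Lax--Milgram (or Riesz representation in the equivalent inner product $\mathcal Q_A$) gives a unique $T_A^\star$ with $\mathcal Q_A(T_A^\star,\eta)=\mathcal L_A(\eta)$ for all $\eta\in V_1$.
\item Expanding gives $R_A(T_A^\star+\eta)=R_A(T_A^\star)+\mathcal Q_A(\eta,\eta)\ge R_A(T_A^\star)+c\lVert\eta\rVert^2$. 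Hence $T_A^\star$ is the unique minimizer, and it coincides with the solution of $C_{XX}T=C_{XY}$ from Theorem~\ref{minim}.
\item Testing the normal equation with $\eta=T_A^\star$ yields $c\lVert T_A^\star\rVert^2\le \mathcal Q_A(T_A^\star,T_A^\star)=\mathcal L_A(T_A^\star)\le\lVert\mathcal L_A\rVert\,\lVert T_A^\star\rVert$, which is the stated bound.
\end{itemize}

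\emph{Coordinate representation (ii).} The key step is to show that $T\mapsto v=(v_{k,\ell})$ is an isometric isomorphism $V_1\to\ell^2$ under which
\[
\mathcal Q_A(T,T)=\langle v,\Sigma v\rangle_{\ell^2},\qquad \mathcal L_A(T)=\langle b,v\rangle_{\ell^2}.
\]
\begin{itemize}
\item The family $\{\mathcal R_\H\psi_\ell\}$ is an orthonormal basis of $\H^*$, so that $x=\sum_\ell\langle\psi_\ell,x\rangle_{\H,\H^*}\mathcal R_\H\psi_\ell$. Thus $\lVert T\rVert_{\mathcal{HS}}^2=\sum_{k,\ell}v_{k,\ell}^2$, and $T(x)=\sum_{k,\ell}v_{k,\ell}\langle\psi_\ell,x\rangle\phi_k$ with convergence in $\mathbb H_o$, because $T$ is bounded and the expansion of $x$ converges in $\H^*$.
\item Substituting $x=X^A(\omega)$ gives $\lVert T(X^A)\rVert^2=\sum_k\bigl(\sum_\ell v_{k,\ell}\langle\psi_\ell,X^A\rangle\bigr)^2$.
\item Integrating over $\Omega$ and exchanging sum and integral produces $\sum_k\sum_{\ell,\ell'}v_{k,\ell}\Sigma^{(k,\ell)(k,\ell')}v_{k,\ell'}$, and the analogous computation gives $\mathcal L_A$.
\end{itemize}
Coercivity is then literally $\Sigma\succeq cI$ on $\ell^2$, so $\Sigma$ is boundedly invertible. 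The normal equation becomes $\Sigma v^\star=b$, giving $v^\star=\Sigma^{-1}b\in\ell^2$, which is the stated summability.

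\emph{Degenerate case (iii).} Here I would use the same coordinates. Minimizers of $\langle v,\Sigma v\rangle-2\langle b,v\rangle$ are exactly the solutions of $\Sigma v=b$, and these form $v_0+\ker\Sigma$, matching $T_0+\Ker(C_{XX})$ in Theorem~\ref{minim}. The solution of minimal norm is the unique one in $(\ker\Sigma)^\perp=\overline{\ran\Sigma}$, namely $\Sigma^\dagger b$. It lies in $\ell^2$ whenever $b\in\ran\Sigma$, which is the condition $C_{XY}\in\ran(C_{XX})$ of Theorem~\ref{minim}.

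\emph{Expected obstacles.} The first is the justification of the exchange of $\sum_k$, $\sum_\ell$ and $\int_\Omega$ in the coordinate computation. I would handle it by Tonelli for $\sum_k$, since the terms are nonnegative, and by $L^2(\Omega)$-convergence of the partial sums in $\ell$, which follows from $\int_\Omega\lVert X^A\rVert_{\H^*}^2\,d\mu<\infty$. The second is the hypothesis in (iii). Mere membership of $\mathcal L_A$ in the \emph{closure} of the range does not guarantee existence, so I would argue under $b\in\ran\Sigma$, equivalently via Theorem~\ref{minim}(i). I would remark that for $b\in\overline{\ran\Sigma}\setminus\ran\Sigma$ the infimum is still finite but is not attained.
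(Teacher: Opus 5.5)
For (i) and (ii) you follow the paper's proof essentially verbatim. Both arguments reduce to minimizing $R_A$ via Theorem~\ref{WR1} and Theorem~\ref{minim}, use Lax--Milgram with the test $\eta=T_A^\star$ for the bound, and use the isometry $T\mapsto v$ that turns $\mathcal Q_A,\mathcal L_A$ into $\langle v,\Sigma v\rangle_{\ell^2},\langle v,b\rangle_{\ell^2}$. Your Tonelli/$L^2(\Omega)$ justification of the interchanges is more careful than the paper's. The real difference is (iii): there your objection is correct and the paper's argument is not.

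\textbf{Where the paper's argument fails.} The paper sets $B=R^{-1}K$ and defines $B^\dagger=\int_{\sigma(B)\setminus\{0\}}\lambda^{-1}\,dE(\lambda)$. It then asserts that $B^\dagger$ is bounded on $\overline{\mathrm{Range}(B)}$ with $BB^\dagger\ell=\ell$ there. This holds only when $\mathrm{Range}(B)$ is closed. For $\ell\in\overline{\mathrm{Range}(B)}\setminus\mathrm{Range}(B)$ the integral $\int\lambda^{-2}\,d\lVert E(\lambda)\ell\rVert^2$ diverges.

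\textbf{The hypothesis of (iii) is vacuous.} If $\Sigma v=0$, then $\lVert\Gamma T\rVert_{\mathcal H}^2=\langle v,\Sigma v\rangle_{\ell^2}=0$, so $\langle b,v\rangle_{\ell^2}=\mathcal L_A(T)=\langle Y^A,\Gamma T\rangle_{\mathcal H}=0$. Hence $b\in(\ker\Sigma)^\perp=\overline{\ran\Sigma}$ always, and (iii) as written would assert that minimizers always exist.

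\textbf{A counterexample.} Work on a probability space with the following data:
\begin{itemize}
\item $d=1$, $\mathbb H_o=\H=\ell^2$ with standard bases, $\xi^A=0$ and $\mathcal A=\{A\}$;
\item $A=(j^{-1}\epsilon_j)_{j\ge1}$ with $\epsilon_j$ i.i.d.\ standard normal;
\item $\mathcal S_Xu=\mathcal R_\H\bigl((j^{-1}u_j)_j\bigr)$ and $\mathcal S_Yu=\langle u,(j^{-1})_j\rangle\,e_1$.
\end{itemize}
Then $\Sigma v=b$ reads $j^{-4}v_{k,j}=\delta_{k1}\,j^{-4}$, which forces $v_{1,j}=1$ for all $j$. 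So no Hilbert--Schmidt minimizer exists, although $\inf_T R_A(T)=0$.

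Your replacement hypothesis $b\in\ran\Sigma$, equivalently $C_{XY}\in\ran(C_{XX})$ from Theorem~\ref{minim}(i), is the right one. Under it, your minimal-norm argument via $(\ker\Sigma)^\perp$ is complete.

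This is not a marginal case. Test coercivity on the rank-one operator $T(x)=\langle\psi_j,x\rangle_{\H,\H^*}\,\phi_1$, which has norm one. This gives $\int_\Omega(\chi^A_j)^2\,d\mu\ge c$ for every $j$. That contradicts $\sum_j\int_\Omega(\chi^A_j)^2\,d\mu=\int_\Omega\lVert X^A\rVert_{\H^*}^2\,d\mu<\infty$ unless $\dim\H<\infty$. So for infinite-dimensional $\H$, parts (i)--(ii) are vacuous and (iii) is the only operative case.
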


\noindent


\begin{ex}[LTI system]\label{ex:LTI}
Let $(\Omega,\mathcal F,\mu)$ be a finite measure space and assume
$
\mathbb H_o=L^2([t_1,t_2]).
$
Let $\H$ be a real separable Hilbert space.

\medskip\noindent
\textbf{LTI system operator (fixed).}
Fix impulse responses $h,\phi\in L^1(\mathbb R)$ and define convolution on $\mathbb R$ by
\[
(h*u)(t):=\int_{\mathbb R} h(t-s)u(s)\,ds,
\qquad
(\phi*u)(t):=\int_{\mathbb R} \phi(t-s)u(s)\,ds.
\]
Let $\Psi:\mathbb H_o\to \H^*$ be a fixed bounded linear map.
Define the system operator
\[
\mathcal S:\ \mathbb H_o^{2}\longrightarrow \mathbb H_o\times \H^*,
\qquad
U=(U_1,U_2)\longmapsto \big(\mathcal S_Y U,\ \mathcal S_X U\big),
\]
by
\[
\mathcal S_Y U := (h*U_1)\big|_{[t_1,t_2]}\in \mathbb H_o,
\qquad
\mathcal S_X U := \Psi\!\left[(\phi*U_2)\big|_{[t_1,t_2]}\right]\in \H^*.
\]
Then
\[
\lVert \mathcal S_Y\rVert \le \lVert h\rVert_{L^1(\mathbb R)},
\qquad
\lVert \mathcal S_X\rVert \le \lVert \Psi\rVert\,\lVert \phi\rVert_{L^1(\mathbb R)}.
\]
Hence
\[
\mathcal S=(\mathcal S_Y,\mathcal S_X):
\mathbb H_o^{2}\longrightarrow \mathbb H_o\times \H^*
\]
is a bounded linear map. The operator $\mathcal S_X$ determines the observed process,
while $\mathcal S_Y$ determines the target process to be reconstructed.

\medskip\noindent
\textbf{Sources and observations.}
Let $\mathcal A\subset\mathcal V$ be a class of admissible sources.
For each $A\in\mathcal A$ let $\xi^A\in\mathcal V$ be a baseline component such that
\[
\int_\Omega \xi^A\otimes\xi^A\,d\mu=\Sigma_\xi,
\qquad
\int_\Omega A\otimes\xi^A\,d\mu=0.
\]
In the present LTI/WSS setting, the baseline component $\xi^A$
can be used to model additive noise or background disturbances with
fixed second--order structure.
Define the observed pair
\[
(Y^A,X^A):=\mathcal{S}(A+\xi^A),
\qquad\text{i.e.}\qquad
Y^A=\mathcal S_Y(A+\xi^A),\quad
X^A=\mathcal S_X(A+\xi^A).
\]
Thus $Y^A$ is $\mathbb H_o$–valued and $X^A$ is $\H^*$–valued.

\medskip\noindent
\textbf{Quadratic cost functional.}
For $T\in\mathcal{HS}(\H^*,\mathbb H_o)$ define
\[
R_A(T):=\int_\Omega
\big\lVert Y^A - T(X^A)\big\rVert_{\mathbb H_o}^2\,d\mu .
\]

\medskip\noindent
\textbf{Covariance envelope.}
For $A\in\overline{\mathcal A}$ define
\[
C_{\mathcal A}(A):=
\Big\{
A'\in\mathcal A:
\Sigma_{A'}\preceq \Sigma_A
\ \text{on }\mathbb H_o^2
\Big\},
\]
where
$\Sigma_A:=\int_\Omega A\otimes A\,d\mu$
is the source covariance operator on $\mathbb H_o^{2}$.


\medskip\noindent
\textbf{Extremal principle.}
For every fixed $T\in\mathcal{HS}(\H^*,\mathbb H_o)$,
$
\sup_{A'\in C_{\mathcal A}(A)} R_{A'}(T)=R_A(T),
$
with $\mathcal{S}$ fixed as above.


\paragraph{Remark (Stationarity and robust spectral uncertainty).}
The extremal principle itself does not require stationarity.
Wide--sense stationarity is introduced only to obtain a diagonal
frequency--domain representation of the covariance operators
and of the associated minimizer.

In the WSS/LTI setting the pair $(Y^A,X^A)$ may be interpreted as a
target--observation system generated by the fixed operator pair
$(\mathcal S_Y,\mathcal S_X)$, where $X^A$ represents the observed process
and the operator $T$ acts as a reconstruction filter for $Y^A$.
The covariance envelope then induces a pointwise Loewner-order constraint
on the associated matrix-valued spectral densities,
\[
\widehat K_A(\omega)-\widehat K_{A'}(\omega)\succeq0
\qquad\text{for a.e.\ }\omega\in\mathbb R.
\]
This places the present framework in the broader context of Wiener
filtering~\cite{Wiener1949} and spectral uncertainty theory. Classical
robust formulations, such as those studied by Poor~\cite{Poor1980},
typically rely on minimax or saddle-point arguments over uncertainty
classes of admissible spectra. In contrast, the present framework yields
a direct extremal reduction to a canonical representative through
covariance domination and operator ordering.

\medskip\noindent
\textbf{WSS/LTI specialization.}
Assume now that $\mu=\P$ and that each $A\in\mathcal A$ admits a wide–sense stationary
extension to $\mathbb R$ with matrix–valued covariance kernel
$K_A(\tau)\in\mathbb R^{2\times 2}$.
Then $\Sigma_A$ is induced by convolution with $K_A(\cdot)$, and writing
$\widehat K_A(\omega)$ for its Fourier transform,
\[
A'\in C_{\mathcal A}(A)
\quad\Longleftrightarrow\quad
\widehat K_A(\omega)-\widehat K_{A'}(\omega)
\ \text{is positive semidefinite for a.e.\ }\omega,
\]
cf.\ Proposition~\ref{WSS}.

\medskip\noindent
\textbf{Frequency–domain covariance blocks (representation).}
In the WSS/LTI setting, the covariance structure of $(Y^A,X^A)$
diagonalizes under the Fourier transform.
Denoting by
\[
\Sigma_{(Y^A,X^A)}=
\begin{pmatrix}
\Sigma_A^{YY} & \Sigma_A^{YX}\\
\Sigma_A^{XY} & \Sigma_A^{XX}
\end{pmatrix},
\]
the corresponding spectral densities satisfy, for a.e.\ $\omega$,
\begin{align*}
  \widehat K_A^{YY}(\omega)
  &= |H(\omega)|^2\, [\widehat K_A(\omega)]_{11},\\
  \widehat K_A^{YX}(\omega)
  &= H(\omega)\,\overline{\Phi(\omega)}\,[\widehat K_A(\omega)]_{12},\\
  \widehat K_A^{XX}(\omega)
  &= |\Phi(\omega)|^2\,[\widehat K_A(\omega)]_{22},
\end{align*}
where $H=\widehat h$ and $\Phi=\widehat\phi$.
The baseline component contributes only through a fixed additive
positive semidefinite spectral term determined by $\Sigma_\xi$.

\medskip\noindent
\textbf{Frequency--domain characterization of the minimizer.}
The minimum--norm minimizer $T^*$ satisfies
$
K_A^{XX}T^*=K_A^{XY}.
$
In the WSS/LTI setting this diagonalizes under the Fourier transform into the
pointwise relation
\[
\widehat K_A^{XX}(\omega)\,\tau(\omega)
=
\widehat K_A^{XY}(\omega),
\qquad\text{for a.e.\ }\omega,
\]
where $\tau(\omega)$ denotes the frequency-domain transfer function
associated with the minimizer $T^*$.
Equivalently,
\[
\tau(\omega)
=
\big(\widehat K_A^{XX}(\omega)\big)^\dagger
\widehat K_A^{XY}(\omega).
\]

\end{ex}

\begin{ex}[Elliptic reconstruction under second-order uncertainty]
Let \(D\subset \mathbb R^m\) be a bounded Lipschitz domain and let
\[
a:H^1_0(D)\times H^1_0(D)\to\mathbb R
\]
be a bounded symmetric coercive bilinear form. By the Lax--Milgram theorem,
there exists a bounded solution operator
\[
G:H^{-1}(D)\to H^1_0(D)
\]
such that, for every \(f\in H^{-1}(D)\), the function \(u=Gf\) is the unique
weak solution of
\[
a(u,v)=\langle f,v\rangle_{H^{-1},H^1_0}
\qquad
\text{for all }v\in H^1_0(D).
\]

Let \(E\subset H^{-1}(D)\) be a separable Hilbert space continuously embedded
into \(H^{-1}(D)\). For each admissible perturbative forcing field
$
A\in L^2(\Omega;E),
$
let
$
\xi^A\in L^2(\Omega;E)
$
be a possibly \(A\)-dependent baseline forcing component with fixed second-order structure
\[
\int_\Omega \xi^A\otimes\xi^A\,d\mu=\Sigma_\xi,
\qquad
\Sigma_\xi\in\mathcal L(E),
\]
and satisfying
\[
\int_\Omega A\otimes\xi^A\,d\mu=0
\qquad
\text{in }\mathcal L(E).
\]

Define the second-order operator associated with the perturbative forcing
component by
\[
\Sigma_A
:=
\int_\Omega A(\omega)\otimes A(\omega)\,d\mu(\omega)
\in\mathcal L(E).
\]

The present reconstruction setting admits a perturbative interpretation
complementary to the WSS/LTI realization. Here the baseline forcing
component \(\xi^A\) represents the intrinsic second-order reference
structure, while the source component \(A\) models structured perturbative
deviations relative to this reference structure.

Define the corresponding random elliptic state by
\[
u^A(\omega):=G(A(\omega)+\xi^A(\omega)).
\]

Fix bounded linear operators
\[
S_Y:H^1_0(D)\to H_o,
\qquad
S_X:H^1_0(D)\to H^*,
\]
and define
\[
Y^A:=S_Yu^A,
\qquad
X^A:=S_Xu^A.
\]

Equivalently,
\[
(Y^A,X^A)(\omega)=\mathcal S(A+\xi^A),
\]
where
\[
\mathcal S:=(S_YG,S_XG):E\to H_o\times H^*
\]
is a bounded linear system operator.

Let \(\mathcal A\subset L^2(\Omega;E)\) be a class of admissible forcing
fields and let \(A\in\overline{\mathcal A}\). Assume that
$
\Sigma_{A'}\preceq \Sigma_A$ for all $A'\in\mathcal A$. The covariance-envelope comparison acts only on the perturbative forcing
component \(A\), while the baseline component \(\xi^A\) contributes a fixed
second-order term independent of the envelope optimization. Since
\(\mathcal S\) is bounded and linear, covariance domination on the
perturbative forcing component is transported through the induced reconstruction
operator:
\[
\mathcal S\Sigma_{A'}\mathcal S^*
\preceq
\mathcal S\Sigma_A\mathcal S^*.
\]

Thus covariance domination at the level of perturbative forcing fields is
preserved under the induced elliptic reconstruction structure.

The extremal principle of the present paper therefore applies to the associated
reconstruction problem. In particular, for every admissible reconstruction
operator \(T\in V_1\),
$
\sup_{A'\in\mathcal A}R_{A'}(T)=R_A(T).
$
\end{ex}

    \section{Proofs}\label{sec:pfs}

\subsection{Proof of Proposition \ref{closed}}
\begin{proof}
Let $\{A_n\}_{n\in\N}\subset C_{\mathcal A}(A)$ and assume that
$A_n\to A'$ in $\mathcal V$.
Since $\mathcal A$ is closed in $\mathcal V$ and $A_n\in\mathcal A$ for all $n$,
we have $A'\in\mathcal A$. Fix $\mathbf g\in \mathbb H_o^{d}$.
By definition of $C_{\mathcal A}(A)$ we have for every $n$,
$
\langle \mathbf g,\Sigma_{A_n}\mathbf g\rangle_{\mathbb H_o^{d}}
\le
\langle \mathbf g,\Sigma_A\mathbf g\rangle_{\mathbb H_o^{d}}.
$

\noindent By the continuity property of the covariance operator,
$A_n\to A'$ in $\mathcal V$ implies
$
\langle \mathbf g,\Sigma_{A_n}\mathbf g\rangle_{\mathbb H_o^{d}}
\longrightarrow
\langle \mathbf g,\Sigma_{A'}\mathbf g\rangle_{\mathbb H_o^{d}}.
$
Passing to the limit in the inequality yields
\[
\langle \mathbf g,\Sigma_{A'}\mathbf g\rangle_{\mathbb H_o^{d}}
\le
\langle \mathbf g,\Sigma_A\mathbf g\rangle_{\mathbb H_o^{d}}
\qquad \forall\,\mathbf g\in\mathbb H_o^{d}.
\]
Hence $A'\in C_{\mathcal A}(A)$, and therefore $C_{\mathcal A}(A)$ is closed in $\mathcal V$.
\end{proof}

\subsection{Proof of Proposition \ref{Gprop}}
\begin{proof}
Define
\[
C
:=
\Bigl\{
A'\in\mathcal A:
\langle \mathbf g,\Sigma_{A'}\mathbf g\rangle_{\mathbb H_o^{d}}
\le
\langle \mathbf g,\Sigma_A\mathbf g\rangle_{\mathbb H_o^{d}}
\quad \forall\,\mathbf g\in\mathcal G^{d}
\Bigr\}.
\]
Since $\mathcal G^{d}\subset \mathbb H_o^{d}$, we trivially have
$C_{\mathcal A}(A)\subseteq C$. To prove the reverse inclusion, let $\mathbf g\in\mathbb H_o^{d}$ be arbitrary.
Since $\mathcal G$ is dense in $\mathbb H_o$, there exists a sequence
$\mathbf g_n\in\mathcal G^{d}$ such that
$
\|\mathbf g_n-\mathbf g\|_{\mathbb H_o^{d}}\longrightarrow 0
$. Fix $A'\in C$. By bilinearity of the quadratic form and the triangle inequality,
\begin{align*}
&
\big|
\langle \mathbf g,(\Sigma_A-\Sigma_{A'})\mathbf g\rangle
-
\langle \mathbf g_n,(\Sigma_A-\Sigma_{A'})\mathbf g_n\rangle
\big|
\\
&\le
\big|
\langle \mathbf g-\mathbf g_n,(\Sigma_A-\Sigma_{A'})\mathbf g\rangle
\big|
+
\big|
\langle \mathbf g_n,(\Sigma_A-\Sigma_{A'})(\mathbf g-\mathbf g_n)\rangle
\big|
\\
&\le
2\,\|\Sigma_A-\Sigma_{A'}\|_{\mathcal L(\mathbb H_o^{d})}\,
\|\mathbf g_n-\mathbf g\|_{\mathbb H_o^{d}}\,
\|\mathbf g\|_{\mathbb H_o^{d}} .
\end{align*}
Since $\Sigma_A-\Sigma_{A'}$ is a bounded operator on $\mathbb H_o^{d}$,
the right-hand side converges to zero as $n\to\infty$. By assumption,
\[
\langle \mathbf g_n,(\Sigma_A-\Sigma_{A'})\mathbf g_n\rangle \ge 0
\quad \text{for all } n,
\]
and hence, passing to the limit,
\[
\langle \mathbf g,(\Sigma_A-\Sigma_{A'})\mathbf g\rangle
=
\lim_{n\to\infty}
\langle \mathbf g_n,(\Sigma_A-\Sigma_{A'})\mathbf g_n\rangle
\ge 0 .
\]
Since $\mathbf g\in\mathbb H_o^{d}$ was arbitrary, this shows
$A'\in C_{\mathcal A}(A)$, and therefore $C\subseteq C_{\mathcal A}(A)$. The proof is complete.
\end{proof}

\subsection{Proof of Proposition \ref{WSS}}
\begin{proof}
Denote
\[
C=\Bigl\{A'\in \mathcal{A}:\ 
\widehat{K}_{A}(\omega)-\widehat{K}_{A'}(\omega)
\ \text{is positive semidefinite for Lebesgue-a.e. }\omega\Bigr\}.
\]

Fix $A'\in\mathcal A$ and for $1\le i,j\le  d $ let $K_{i,j}$ denote the $(i,j)$--entry of the matrix-valued kernel
$K_A-K_{A'}$, viewed as a function on $\R$.
Let $f,g\in \mathbb H_o=L^2([t_1,t_2])$ and extend them by zero outside $[t_1,t_2]$,
still denoted $f,g$, so that $f,g\in L^2(\R)$.
By the Plancherel theorem we then have
\begin{align*}
\int_{[t_1,t_2]^2} g(s)\,K_{i,j}(s-t)\,f(t)\,ds\,dt
&=
\int_{\R^2} g(s)\,K_{i,j}(s-t)\,f(t)\,ds\,dt
\\
&=
\int_{\R} g(s)\,(K_{i,j}*f)(s)\,ds
\\
&=
\frac{1}{2\pi}\int_{\R} \widehat g(\omega)\,
\widehat{K_{i,j}}(\omega)\,\widehat f(\omega)\,d\omega .
\end{align*}

Therefore, for any $g=(g_1,\ldots,g_{d})$ with $g_i\in\mathbb H_o$,
\begin{align*}
&\int_{[t_1,t_2]^2}
\big(g_1(s),\ldots,g_{d}(s)\big)\,
\big(K_A(s-t)-K_{A'}(s-t)\big)\,
\big(g_1(t),\ldots,g_{d}(t)\big)^{*}\,ds\,dt
\\
&\qquad=
\frac{1}{2\pi}\int_{\R}
\big(\widehat g_1(\omega),\ldots,\widehat g_{d}(\omega)\big)\,
\big(\widehat K_A(\omega)-\widehat K_{A'}(\omega)\big)\,
\big(\widehat g_1(\omega),\ldots,\widehat g_{d}(\omega)\big)^{*}\,d\omega ,
\end{align*}
where ${}^*$ denotes conjugate transpose.
Hence, if $A'\in C$, then the right-hand side is nonnegative for all $g$,
and therefore $A'\in C_{\mathcal A}(A)$, i.e.\ $C\subseteq C_{\mathcal A}(A)$.

\vspace{0.5em}
\noindent
Conversely, suppose that $A'\in C^c$.
Let $\lambda_{d}(\omega)$ denote the smallest eigenvalue of
$\widehat K_A(\omega)-\widehat K_{A'}(\omega)$.
By assumption there exists a set $D\subset\R$ of positive Lebesgue measure such that
$\lambda_{d}(\omega)<0$ for $\omega\in D$.
Fix $\omega'\in D$ and let $x\in\C^{d}$ be a corresponding unit eigenvector.
By continuity of the entries of
$\widehat K_A(\omega)-\widehat K_{A'}(\omega)$
there exist $\xi>0$ such that
\[
x^*\big(\widehat K_A(\omega)-\widehat K_{A'}(\omega)\big)x<0
\quad\text{for all }\omega\in(\omega'-\xi,\omega'+\xi).
\]

Let $\psi\in C_c^\infty(\R)$ satisfy
$0\le \psi\le 1$,
$\psi\equiv 1$ on $[\omega'-\xi/2,\omega'+\xi/2]$,
and $\psi\equiv 0$ outside
$[\omega'-\xi/2-\delta,\omega'+\xi/2+\delta]$ for some $\delta>0$.
For $\delta$ sufficiently small we then have
\[
\int_{\R} \psi(\omega)^2\,
x^*\big(\widehat K_A(\omega)-\widehat K_{A'}(\omega)\big)x\,d\omega<0.
\]

By the Plancherel theorem,
\begin{align*}
\int_{\R} \psi(\omega)^2\,
x^*\big(\widehat K_A(\omega)-\widehat K_{A'}(\omega)\big)x\,d\omega
&=
2\pi
\int_{\R^2}
\check\psi(s)\,
x^*\big(K_A(s-t)-K_{A'}(s-t)\big)x\,
\check\psi(t)\,ds\,dt ,
\end{align*}
where $\check\psi$ denotes the inverse Fourier transform of $\psi$.
Define
\[
g_i(s):=x_i\,\check\psi(s)\,\mathbf 1_{[t_1,t_2]}(s),
\qquad 1\le i\le  d ,
\]
so that $g_i\in\mathbb H_o$.
Then
\[
\int_{[t_1,t_2]^2}
\big(g_1(s),\ldots,g_{d}(s)\big)\,
\big(K_A(s-t)-K_{A'}(s-t)\big)\,
\big(g_1(t),\ldots,g_{d}(t)\big)^{*}\,ds\,dt
<0,
\]
which implies $A'\notin C_{\mathcal A}(A)$.
Hence $C_{\mathcal A}(A)\subseteq C$, and the proof is complete.
\end{proof}

	\subsection{Proof of Theorem \ref{WR1}}

    We now proceed with the proof of the main Theorem.

	\begin{proof}[Proof of Theorem \ref{WR1}]
		\textbf{Step 1: Reduction to coefficient form.}
        
We first express the cost functional in terms of coefficient expansions of the target and auxiliary components. Take $A'\in\mathcal{V}$. Since $\n \mathcal{S}_Y \n = \n \mathcal{S}_Y \n \le \n \mathcal{S}\n$, 
		\begin{align}\label{Ybound}
			\int_\Omega \lVert Y^{A'} \rVert_{\mathbb{H}_o}^2d\mu 
			&=
			\int_\Omega \lVert\mathcal{S}_Y\left(A'+\xi^{A'}\right) \rVert_{\mathbb{H}_o}^2d\mu\nonumber
			\\
            &\le
			\n \mathcal{S}_Y \n^2 \int_\Omega \n A'+\xi^{A'}\n_{\mathbb{H}_o^{d}}^2d\mu\nonumber
			\\
			&\le
			\n S \n^2 \int_\Omega \left(2\n A'\n_{\mathbb{H}_o^{d}}^2+\n\xi^{A'}\n_{\mathbb{H}_o^{d}}^2\right)d\mu <\infty,
		\end{align}
		which also implies $Y^{A'}\in \mathbb{H}_o$ $\mu$-a.e.. Take some arbitrary complete ON-basis for $\mathbb{H}_o$, $\{\phi_{n}\}_{n\in\N}$ and some complete ON-basis for for $H$, $\{\psi_{n}\}_{n\in\N}$ and define 
        \[
Z^{A'}_k=\langle Y^{A'},\phi_k\rangle_{\mathbb{H}_o}
\qquad\text{and}\quad
\chi^{A'}_k=\langle \psi_k,X^{A'}\rangle_{\H,\H^*}.
\]
If we let $S_n^{Y^{A'}}=\sum_{k=1}^nZ^{A'}_k\phi_{k}$, then $S_n^{Y^{A'}}\xrightarrow{\mathbb{H}_o}Y^{A'}$ a.s., since $\{\phi_k\}_{k\in\N}$ is an ON-basis. Next, by monotone convergence and the Parseval formula
		\begin{align*}
			 \int_\Omega \lVert Y^{A'}\rVert_{\mathbb{H}_o}^2d\mu 
			&=
			\int_\Omega  \sum_{k=1}^\infty(Z^{A'}_k)^2 d\mu 
			=\sum_{k=1}^\infty\int_\Omega  (Z^{A'}_k)^2 d\mu ,
		\end{align*}
		which implies $\sum_{k=1}^\infty\int_\Omega  (Z^{A'}_k)^2 d\mu <\infty$.  Also, $S_n^{Y^{A'}}\xrightarrow{L^2(\Omega;\mathbb{H}_o)}Y^{A'}$ ,
\begin{align}\label{XL2}
			\lim_{n\to\infty}\int_\Omega  \lVert S_n^{Y^{A'}}-Y^{A'}\rVert_{\mathbb{H}_o}^2d\mu \nonumber
			&=
			\lim_{n\to\infty}\int_\Omega  \lim_{N\to\infty}\left\lVert\sum_{k=1}^nZ^{A'}_k\phi_{k}-\sum_{k=1}^NZ^{A'}_k\phi_{k}\right\rVert_{\mathbb{H}_o}^2 d\mu \nonumber
			\\
			&=
			\lim_{n\to\infty}\int_\Omega  \lim_{N\to\infty}\left\lVert \sum_{k=n+1}^NZ^{A'}_k\phi_{k}\right\rVert_{\mathbb{H}_o}^2  d\mu \nonumber
			\\
			&=
			\lim_{n\to\infty}\int_\Omega  \lim_{N\to\infty}\sum_{k=n+1}^N\left(Z^{A'}_k\right)^2 d\mu \nonumber
			\\
			&=\lim_{n\to\infty}\sum_{k=n+1}^\infty\int_\Omega  \left(Z^{A'}_k\right)^2 d\mu =0,
		\end{align}
		by monotone convergence. Let $\mathcal R:\H\to\H^*, \qquad (\mathcal R h)(\psi):=\langle h,\psi\rangle_{\H}$. By Riesz representation theorem there exists $h\in\H$ (depending on $\omega\in\Omega)$ such that $X^{A'}(\psi)=\langle h,\psi\rangle_\H$ for all $\psi\in\H$, and $\lVert h\rVert_\H=\lVert X^{A'}\rVert_{\H^*}$. 
        By the Parseval theorem,
        \begin{align*}
        \sum_{l=1}^\infty(\chi_l^{A'})^2
        &=\sum_{l=1}^\infty\left\langle \psi_l , X^{A'}\right\rangle_{\H,\H^*}^2
        \\
        &=\sum_{l=1}^\infty\left\langle \psi_l , h\right\rangle_{\H}^2
        =\lVert X^{A'}\rVert_{\H^{*}}^2
        \end{align*}
        and since 
        \begin{align*}
        \int_\Omega  \lVert X^{A'}\rVert_{\H^*}^2d\mu 
        &\le 
        \lVert \mathcal{S}_X\rVert^2 \int_\Omega  \lVert A'+\xi^{A'}\rVert_{\mathbb{H}_o}^2d\mu 
        \\
        &\le \lVert \mathcal{S}\rVert^2 2\left(\lVert \xi^{A'}\rVert_{\mathcal{V}}^2+\lVert A'\rVert_{\mathcal{V}}^2\right)
        \end{align*}
        it follows that 
        \begin{align}\label{chisummable}
        \sum_{k=1}^\infty\int_\Omega (\chi_k^{A'})^2d\mu <\infty.
        \end{align}

\noindent We endow $\H^*$ with its canonical Hilbert space structure via the Riesz
isometric isomorphism
\[
\mathcal R:\H\to\H^*, \qquad (\mathcal R u)(\psi):=\langle u,\psi\rangle_{\H}.
\]
Via this identification, $\mathcal{HS}(\H^*,\mathbb H_o)$ is canonically
isometrically isomorphic to $\mathcal{HS}(\H,\mathbb H_o)$, and hence to
$\mathbb H_o\widehat\otimes \H$. Let $\{\phi_k\}_{k\ge1}$ and $\{\psi_\ell\}_{\ell\ge1}$ be orthonormal bases of
$\mathbb{H}_o$ and $\H$, respectively. As $\mathcal{HS}(\H^*,\mathbb{H}_o)\cong \mathbb{H}_o \widehat{\otimes}\H$, 
$\{\phi_k\otimes\psi_\ell\}_{k,\ell\ge1}$ is an orthonormal basis of
$\mathbb{H}_o \widehat{\otimes}\H$, and hence
\[
\mathbb{H}_o \widehat{\otimes}\H
=
\left\{
\sum_{k=1}^\infty\sum_{\ell=1}^\infty \lambda_{k,\ell}\,\phi_k\otimes\psi_\ell
:\ \sum_{k,\ell}\lambda_{k,\ell}^2<\infty
\right\},
\]
with convergence in $\lVert\cdot\rVert_{\mathbb{H}_o \widehat{\otimes}\H}$.

\noindent Under the canonical isometric identification
\[
\mathcal I:\mathbb H_o\widehat\otimes \H \longrightarrow \mathcal{HS}(\H^*,\mathbb H_o),
\]
let $\beta_T\in \mathbb H_o\widehat\otimes \H$ denote the unique element such that
$\mathcal I(\beta_T)=T$.
Writing
\[
\beta_T=\sum_{k=1}^\infty\sum_{\ell=1}^\infty \lambda_{k,\ell}^{\beta_T}\,\phi_k\otimes\psi_\ell,
\qquad
\lambda_{k,\ell}^{\beta_T}:=\langle \beta_T,\phi_k\otimes\psi_\ell\rangle_{\mathbb H_o\widehat\otimes \H},
\]
we have for every $x^*\in\H^*$,
\[
T(x^*)
=
\sum_{k=1}^\infty\sum_{\ell=1}^\infty
\lambda_{k,\ell}^{\beta_T}\,\langle \psi_\ell,x^*\rangle_{\H,\H^*}\,\phi_k,
\]
with convergence in $\mathbb H_o$.
In particular, since $\chi_\ell^{A'}=\langle \psi_\ell,X^{A'}\rangle_{\H,\H^*}$,
\[
T(X^{A'})
=
\sum_{k=1}^\infty\sum_{\ell=1}^\infty
\lambda_{k,\ell}^{\beta_T}\,\chi_\ell^{A'}\,\phi_k.
\]

\noindent For $\beta_T$ define the partial sums
\[
S_n^{\beta_T}
:=
\sum_{k=1}^n\sum_{\ell=1}^n \lambda_{k,\ell}^{\beta_T}\,\phi_k\otimes\psi_\ell.
\]
Then $\left\lVert S_n^{\beta_T}-\beta_T\right\rVert_{\mathbb{H}_o \widehat{\otimes}\H}\to 0$ as $n\to\infty$. Next, we note that, 
\begin{align}\label{assbound}
\int_\Omega \left\lVert T( X^{A'})\right\rVert_{\mathbb{H}_o}^2d\mu
&\le
\int_\Omega \lVert T \rVert_{V_1}^2 \lVert X^{A'} \rVert_{\H^*}^2 d\mu \nonumber
\\
&\le \lVert T \rVert_{V_1}^2  \int_\Omega \left(2\lVert  \mathcal{S}_{X} A' \rVert_{\mathbb{H}_o^{d}}^2+2\lVert \mathcal{S}_{X}\xi^{A'} \rVert_{\mathbb{H}_o^{d}}^2\right)d\mu \nonumber
\\
&\le 2\lVert T \rVert_{V_1}^2 \lVert \mathcal{S}\rVert^2 \int_\Omega \left(\lVert A' \rVert_{\mathbb{H}_o^{d}}^2+\lVert \xi^{A'} \rVert_{\mathbb{H}_o^{d}}^2\right)d\mu <\infty,
\end{align}
where we utilized that 
$$\left\lVert T( X^{A'})\right\rVert_{\mathbb{H}_o}\le  \lVert T \rVert \lVert X^{A'} \rVert_{\H^*} \le  \lVert T \rVert_{V_1} \lVert X^{A'} \rVert_{\H^*}.$$
Therefore if we let $Q=T\left(X^{A'} \right) $ and 
$$S_n^{\int}=\sum_{k=1}^n \left\langle  T( X^{A'}),\phi_k\right\rangle_{\mathbb{H}_o} \phi_k$$ 
then $S_n^{\int}\xrightarrow{\mathbb{H}_o}Q$ $\mu$-a.e.. Since
\begin{align*}
\left\langle T(X^{A'}),\phi_k\right\rangle_{\mathbb{H}_o}
&=
\left\langle
\sum_{k'=1}^\infty\sum_{\ell=1}^\infty
\lambda_{k',\ell}^{\beta_T}\,
\pairHH{\psi_\ell}{X^{A'}}\,
\phi_{k'},
\;\phi_k
\right\rangle_{\mathbb{H}_o}
\\
&=
\sum_{k'=1}^\infty\sum_{\ell=1}^\infty
\lambda_{k',\ell}^{\beta_T}\,
\pairHH{\psi_\ell}{X^{A'}}\,
\langle \phi_{k'},\phi_k\rangle_{\mathbb{H}_o}
\\
&=
\sum_{\ell=1}^\infty
\lambda_{k,\ell}^{\beta_T}\,
\pairHH{\psi_\ell}{X^{A'}}
=
\sum_{\ell=1}^\infty
\lambda_{k,\ell}^{\beta_T}\,\chi_\ell^{A'} .
\end{align*}

we get,
$
S_n^{\int}
=
\sum_{k=1}^n\sum_{l=1}^\infty \lambda_{k,l}^{\beta_T} \chi_l^{A'}  \phi_k.
$
Next, utilizing orthonormality
\begin{align}\label{series1}
\int_\Omega \left\lVert\sum_{k=1}^n\sum_{l=1}^n \lambda^{\beta_T}_{k,l}\chi^{A'}_l\phi_k- T\left(X^{A'} \right) \right\rVert_{\mathbb{H}_o}^2 d\mu 
&\le
\int_\Omega\left\lVert S_n^{\int}-\sum_{k=1}^n\sum_{l=n+1}^\infty \lambda_{k,l}^{\beta_T} \chi_l^{A'}  \phi_k- Q \right\rVert_{\mathbb{H}_o}^2 d\mu \nonumber
\\
&\le
\int_\Omega\left\lVert S_n^{\int}- Q\right\rVert_{\mathbb{H}_o}^2 d\mu 
+
\int_\Omega\left\lVert \sum_{k=1}^n\sum_{l=n+1}^\infty \lambda_{k,l}^{\beta_T} \chi_l^{A'}  \phi_k \right\rVert_{\mathbb{H}_o}^2 d\mu 
\nonumber
\\
&=
\int_\Omega \left\lVert S_n^{\int}- Q \right\rVert_{\mathbb{H}_o}^2 d\mu 
+
\sum_{k=1}^n  \int_\Omega \left( \sum_{l=n+1}^\infty  \lambda_{k,l}^{\beta_T}\chi_l^{A'}  \right)^2 d\mu 
\nonumber
\\
&\le
\int_\Omega \left\lVert S_n^{\int}- Q \right\rVert_{\mathbb{H}_o}^2 d\mu 
+
\sum_{k=1}^\infty \sum_{l=1}^\infty  \left(\lambda_{k,l}^{\beta_T}\right)^2\int_\Omega  \sum_{j=n+1}^\infty  \left(\chi_j^{A'}\right)^2   d\mu 
\nonumber
\\
&=
\int_\Omega \left\lVert S_n^{\int}- Q \right\rVert_{\mathbb{H}_o}^2 d\mu 
+
 \lVert T \rVert_{V_1}^2 \int_\Omega  \sum_{j=n+1}^\infty  \left(\chi_j^{A'}\right)^2   d\mu
\end{align}
where, by the Cauchy-Schwarz inequality and the definition of $\beta_T$
\begin{align}\label{finitdubbelsumma}
    \sum_{k=1}^\infty\int_\Omega \left( \sum_{l=n+1}^\infty  \lambda_{k,l}^{\beta_T}\chi_l^{A'}  \right)^2 d\mu 
    &\le 
    \sum_{k=1}^\infty \sum_{l=1}^\infty  \left(\lambda_{k,l}^{\beta_T}\right)^2\int_\Omega  \sum_{j=n+1}^\infty  \left(\chi_j^{A'}\right)^2   d\mu \nonumber
    \\
    &= \lVert T \rVert_{V_1}^2 \int_\Omega  \sum_{j=n+1}^\infty  \left(\chi_j^{A'}\right)^2   d\mu.
\end{align}
The second term on the right-most side of \eqref{series1} converges to zero due to \eqref{chisummable}. We now wish to bound the first term on the right-most side of \eqref{series1} using \eqref{assbound},
\begin{align}
M_n:=\left\lVert S_n^{\int}- Q \right\rVert_{\mathbb{H}_o}^2\nonumber
&\le
2\left\lVert  Q \right\rVert_{\mathbb{H}_o}^2
+
2\left\lVert  S_n^{\int} \right\rVert_{\mathbb{H}_o}^2\nonumber
\\
&\le
2\lVert  T \rVert_{V_1}^2 \lVert \mathcal{S} \rVert^2 \left(2\lVert A' \rVert_{\mathbb{H}_o^{d}}^2+2\lVert \xi^{A'} \rVert_{\mathbb{H}_o^{d}}^2\right)
+
2\left\lVert \sum_{k=1}^n\sum_{l=1}^\infty \lambda_{k,l}^{\beta_T} \chi_l^{A'}\phi_k\right\rVert_{\mathbb{H}_o}^2\nonumber
\\
&=4\lVert  T \rVert_{V_1}^2  \lVert \mathcal{S} \rVert^2 \left(\lVert A' \rVert_{\mathbb{H}_o^{d}}^2+\lVert \xi^{A'} \rVert_{\mathbb{H}_o^{d}}^2\right)+
2\sum_{k=1}^n\left(\sum_{l=1}^\infty \lambda_{k,l}^{\beta_T} \chi_l^{A'}  \right)^2
\end{align}
and therefore
$$M_n\le 4\lVert  T \rVert_{V_1}^2  \lVert \mathcal{S} \rVert^2 \left(\lVert A' \rVert_{\mathbb{H}_o^{d}}^2+\lVert \xi^{A'} \rVert_{\mathbb{H}_o^{d}}^2\right)
+
2\sum_{k=1}^\infty\left(\sum_{l=1}^\infty \lambda_{k,l}^{\beta_T} \chi_l^{A'}  \right)^2:=M.$$
Combining \eqref{assbound} and \eqref{finitdubbelsumma} (with $n=0$) implies $\int_\Omega Md\mu <\infty$. Since $\{M_n\}_{n\in\N}$ converges to zero $\mu$-a.e. and $0\le M_n\le M$ it follows from the dominated convergence theorem that 
$$\lim_{n\to\infty}\int_\Omega \left\lVert S_n^{\int}- Q\right\rVert_{\mathbb{H}_o}^2 d\mu =0
$$ 
and therefore due to \eqref{series1} we get 
\begin{align}\label{series}
\lim_{n\to\infty}\int_\Omega  \left\lVert \sum_{k=1}^n\sum_{l=1}^n \lambda^{\beta_T}_{k,l}\chi^{A'}_l\phi_k-T\left(X^{A'} \right)  \right\rVert_{\mathbb{H}_o}^2 d\mu 
=0.
\end{align}
Using the Cauchy-Schwarz and the reverse triangle inequality we find,
        \begin{align}\label{RAEQ}
&\Biggl|
R_{A'}(T)
-
\lim_{n\to\infty}
\int_\Omega
   \left\lVert
         S_n^{Y^{A'}}
         -
         \sum_{k,l=1}^n 
            \lambda^{\beta_T}_{k,l}\,
            \chi^{A'}_l\,
            \phi_k\right\rVert_{\mathbb{H}_o}^2
d\mu
\Biggr|\nonumber
\\[0.6em]
&=
\Biggl|
\int_\Omega
\left\lVert
         Y^{A'}
         -
            T\left(X^{A'} \right) \right\rVert_{\mathbb{H}_o}^2
d\mu-
\lim_{n\to\infty}
\int_\Omega
   \left\lVert
         S_n^{Y^{A'}}
         -
         \sum_{k,l=1}^n 
            \lambda^{\beta_T}_{k,l}\,
            \chi^{A'}_l\,
            \phi_k
\right\rVert_{\mathbb{H}_o}^2
d\mu
\Biggr|
\nonumber
\\[0.6em]
&\le
\lim_{n\to\infty} \int_\Omega \Biggl|
\left\lVert
         Y^{A'}
         -
            T\left(X^{A'} \right) \right\rVert_{\mathbb{H}_o}
+
   \left\lVert
         S_n^{Y^{A'}}
         -
         \sum_{k,l=1}^n 
            \lambda^{\beta_T}_{k,l}\,
            \chi^{A'}_l\,
            \phi_k
\right\rVert_{\mathbb{H}_o}
\Biggr|\nonumber
\\[-0.3em]
&\qquad\qquad\qquad\qquad\qquad\qquad\cdot
\Biggl|
\left\lVert
         Y^{A'}
         -
            T\left(X^{A'} \right) \right\rVert_{\mathbb{H}_o}
-
   \left\lVert
         S_n^{Y^{A'}}
         -
         \sum_{k,l=1}^n 
            \lambda^{\beta_T}_{k,l}\,
            \chi^{A'}_l\,
            \phi_k
\right\rVert_{\mathbb{H}_o}
\Biggr|d\mu
\nonumber
\\[0.7em]
&\le
\lim_{n\to\infty}
\int_\Omega
        \left(\underbrace{\left\lVert
        Y^{A'}
        -
            T\left(X^{A'} \right)  \right\rVert_{\mathbb{H}_o}
        + \left\lVert\sum_{k,l=1}^n
              \lambda^{\beta_T}_{k,l}\chi^{A'}_l\phi_k
        + S_n^{Y^{A'}}
         \right\rVert_{\mathbb{H}_o} }_{=:D_n}
      \right)\nonumber
\\[-0.3em]
&\qquad\qquad\qquad\qquad\qquad\qquad\cdot
        \underbrace{
        \left\lVert
        Y^{A'}
        -
            T\left(X^{A'} \right) 
        +\sum_{k,l=1}^n
              \lambda^{\beta_T}_{k,l}\chi^{A'}_l\phi_k
        - S_n^{Y^{A'}}
        \right\rVert_{\mathbb{H}_o}
        }_{=:D_n^\sharp}
d\mu
\nonumber
\\[0.6em]
&\le
\lim_{n\to\infty}
\left(\int_\Omega D_n^2d\mu\right)^{1/2}
\,
\left(\int_\Omega (D_n^\sharp)^2d\mu\right)^{1/2}.
\end{align}
For the first factor on the right-most side above we apply the Pythagoran theorem and the elementary inequality $(a+b+c+d)^2\le 4(a^2+b^2+c^2+d^2)$ 
\begin{align*}
\int_\Omega  D_n^2d\mu 
&=
\int_\Omega \left(\left\lVert
        Y^{A'}
        -
            T\left(X^{A'} \right)  \right\rVert_{\mathbb{H}_o}
        + \left\lVert\sum_{k,l=1}^n
              \lambda^{\beta_T}_{k,l}\chi^{A'}_l\phi_k
        + S_n^{Y^{A'}}\right\rVert_{\mathbb{H}_o}\right)^2d\mu 
\\
&\le 
4\int_\Omega \lVert Y^{A'} \rVert_{\mathbb{H}_o}^2d\mu
+4\int_\Omega \left\lVert T\left(X^{A'} \right) \right\rVert_{\mathbb{H}_o}^2d\mu 
+4\sum_{k=1}^n\int_\Omega \left(\sum_{l=1}^n \lambda^{\beta_T}_{k,l}\chi^{A'}_l\right)^2d\mu 
+4\int_\Omega \lVert Y^{A'} \rVert_{\mathbb{H}_o}^2d\mu
\\
&\le
8\int_\Omega \lVert Y^{A'} \rVert_{\mathbb{H}_o}^2d\mu
+4\lVert  T \rVert_{V_1}^2\lVert \mathcal{S}\rVert^2\left( \lVert A'\rVert_{\mathcal V}+\lVert \xi^{A'}\rVert_{\mathcal V}\right)^2
+4\sum_{k=1}^\infty \sum_{l=1}^\infty  \left(\lambda_{k,l}^{\beta_T}\right)^2\int_\Omega  \sum_{j=1}^\infty  \left(\chi_j^{A'}\right)^2   d\mu ,
\end{align*}
where the term final term on right-most side is finite (and so the left-most side is uniformly bounded in $n$) due to \eqref{chisummable}, \eqref{Ybound} and the fact that 
$$\sum_{k=1}^\infty \sum_{l=1}^\infty \left(\lambda_{k,l}^{\beta_T}\right)^2= \lVert \beta_T\rVert_{\mathbb{H}_o \widehat{\otimes}\H}^2=\lVert T\rVert_{V_1}^2,$$ 
so the first factor in \eqref{RAEQ}.
For the second factor on the right-most side of \eqref{RAEQ}
\begin{align}
    \int_\Omega (D_n^\sharp)^2d\mu
    &=
    \int_\Omega \left\lVert \left(Y^{A'}-S_n^{Y^{A'}}\right)+\left(\sum_{k=1}^n\sum_{l=1}^n \lambda^{\beta_T}_{k,l}\chi^{A'}_l\phi_k-T\left(X^{A'} \right)\right)\right\rVert_{\mathbb{H}_o}^2d\mu \nonumber
    \\
    &\le
    2\int_\Omega\left\lVert Y^{A'}-S_n^{Y^{A'}} \right\rVert_{\mathbb{H}_o}^2 d\mu \nonumber
+
    2\int_\Omega \left\lVert T\left(X^{A'}\right)- \sum_{k=1}^n\sum_{l=1}^n \lambda^{\beta_T}_{k,l}\chi^{A'}_l\phi_k\right\rVert_{\mathbb{H}_o}^2 d\mu 
\end{align}
where the first term converges to zero due to the fact that $S_n^{Y^{A'}}\xrightarrow{L^2(\Omega;\mathbb{H}_o)}Y^{A'}$ (i.e. \eqref{XL2}) and the second term converges to zero by \eqref{series}. We conclude that the right-most side of \eqref{RAEQ} converges to zero. Therefore
		\begin{align}\label{RAlim}
			R_{A'}(T)
			&=
			\lim_{n\to\infty}\int_\Omega \left\lVert \sum_{k=1}^n Z_k^{A'}\phi_k-\sum_{k=1}^n\sum_{l=1}^n \lambda^{\beta_T}_{k,l}\chi^{A'}_l\phi_k\right\rVert_{\mathbb{H}_o}^2d\mu \nonumber
			\\
			&=
			\lim_{n\to\infty}\sum_{k=1}^n\int_\Omega  \left(Z_k^{A'}- \sum_{l=1}^n \lambda_{k,l}^{\beta_T}\chi_l^{A'}\right)^2d\mu .
		\end{align}


\noindent\textbf{Step 2: Reformulate the the integrals appearing in \eqref{ShiftSet} for relevant subspaces}\\
For $W\in \mathbb{H}_o$ define the coefficient functionals
\[
F_k(W):=\langle W,\phi_k\rangle_{ \mathbb{H}_o},\qquad k\ge1.
\]
For $n\in\mathbb N$ let $P_n: \mathbb{H}_o\to \mathbb{H}_o$ denote the orthogonal projection onto
$\operatorname{span}\{\phi_1,\ldots,\phi_n\}$.

Let $A'=(A'(1),\ldots,A'(d))$ be an $\mathbb{H}_o^{d}$--valued random element with finite second moment.
We write $\mathbf P_n:=\operatorname{diag}(P_n,\ldots,P_n)$ for the induced projection on $\mathbb{H}_o^{d}$.
Define the finite--dimensional coefficient vector
\[
F_{1:n}(A')
:=
\big(F_1(A'(1)),\ldots,F_n(A'(1)),\; \ldots,\; F_1(A'(d)),\ldots,F_n(A'(d))\big)^\top
\in\mathbb R^{dn}.
\]
For $1\le i,j\le  d $ define the second-moment operators
\[
\Sigma_{ij}:=\int_\Omega A'(i)\otimes A'(j)d\mu\;:\;\mathbb{H}_o\to\mathbb{H}_o,
\]
where $(x\otimes y)u := \langle u,y\rangle_{\mathbb{H}_o}\,x$.
Collecting the blocks yields the operator
\[
\Sigma_{A'}
:=
\begin{bmatrix}
\Sigma_{11} & \cdots & \Sigma_{1, d }\\
\vdots      & \ddots & \vdots\\
\Sigma_{ d ,1} & \cdots & \Sigma_{ d , d }
\end{bmatrix}
\;:\; \mathbb{H}_o^{d}\to\mathbb{H}_o^{d}.
\]
Define the finite--rank compression
\[
\Sigma_{A'}^{(n)}:=\mathbf P_n\,\Sigma_{A'}\,\mathbf P_n.
\]
Let $u,v\in\mathbb{H}_o$ be arbitrary. By definition of $\Sigma_{ij}^{(n)}$ and since $P_n$ is self--adjoint,
\[
\langle \Sigma_{ij}^{(n)}u,v\rangle_{\mathbb{H}_o}
=
\langle \Sigma_{ij}P_n u,P_n v\rangle_{\mathbb{H}_o}.
\]
Note that $\lVert A'(i)\otimes A'(j)\rVert_{\mathcal{HS}(\mathbb{H}_o)}\le \lVert A'(i)\rVert_{\mathbb{H}_o}\lVert A'(j)\rVert_{\mathbb{H}_o}$ and therefore $A'(i)\otimes A'(j)$ defines a Hilbert-Schmidt operator on $\mathbb{H}_o$ and therefore we obtain
\begin{align*}
\langle \Sigma_{ij}P_n u,P_n v\rangle_{\mathbb{H}_o}
&=
\left\langle \int_\Omega (A'(i)\otimes A'(j))d\mu P_n u,P_n v\right\rangle_{\mathbb{H}_o}
\\
&=
\int_\Omega
\langle (A'(i)\otimes A'(j))P_n u,P_n v\rangle_{\mathbb{H}_o}
d\mu 
\end{align*}
Since $(x\otimes y)z=\langle z,y\rangle_{\mathbb{H}_o}x$, this becomes
\[
\int_\Omega
\langle P_n u,A'(j)\rangle_{\mathbb{H}_o}\;
\langle A'(i),P_n v\rangle_{\mathbb{H}_o}
\,d\mu .
\]
Next, expand the orthogonal projections with respect to the orthonormal basis
$\{\phi_k\}_{k\ge1}$:
\[
P_n u=\sum_{\ell=1}^n \langle u,\phi_\ell\rangle_{\mathbb{H}_o}\,\phi_\ell,
\qquad
P_n v=\sum_{k=1}^n \langle v,\phi_k\rangle_{\mathbb{H}_o}\,\phi_k .
\]
Hence,
\[
\langle P_n u,A'(j)\rangle_{\mathbb{H}_o}
=
\sum_{\ell=1}^n \langle u,\phi_\ell\rangle_{\mathbb{H}_o}\,
\langle \phi_\ell,A'(j)\rangle_{\mathbb{H}_o}
=
\sum_{\ell=1}^n \langle u,\phi_\ell\rangle_{\mathbb{H}_o}\,F_\ell(A'(j)),
\]
and analogously
\[
\langle A'(i),P_n v\rangle_{\mathbb{H}_o}
=
\sum_{k=1}^n \langle v,\phi_k\rangle_{\mathbb{H}_o}\,F_k(A'(i)).
\]
Multiplying these expressions yields
\[
\langle P_n u,A'(j)\rangle_{\mathbb{H}_o}\;
\langle A'(i),P_n v\rangle_{\mathbb{H}_o}
=
\sum_{k=1}^n\sum_{\ell=1}^n
\langle u,\phi_\ell\rangle_{\mathbb{H}_o}\,
\langle v,\phi_k\rangle_{\mathbb{H}_o}\,
F_k(A'(i))\,F_\ell(A'(j)).
\]
Interchanging summation and integration, we obtaining
\[
\langle \Sigma_{ij}^{(n)}u,v\rangle_{\mathbb{H}_o}
=
\sum_{k=1}^n\sum_{\ell=1}^n
\left(
\int_\Omega F_k(A'(i))\,F_\ell(A'(j))\,d\mu
\right)
\langle u,\phi_\ell\rangle_{\mathbb{H}_o}\,
\langle v,\phi_k\rangle_{\mathbb{H}_o}.
\]

Finally, observe that for each $k,\ell$,
\[
\langle (\phi_k\otimes\phi_\ell)u , v\rangle_{\mathbb{H}_o}
=
\langle u,\phi_\ell\rangle_{\mathbb{H}_o}\,
\langle v,\phi_k\rangle_{\mathbb{H}_o}.
\]
Therefore,
\[
\langle \Sigma_{ij}^{(n)}u,v\rangle_{\mathbb{H}_o}
=
\left\langle
\sum_{k=1}^n\sum_{\ell=1}^n
\left(
\int_\Omega F_k(A'(i))\,F_\ell(A'(j))\,d\mu
\right)
(\phi_k\otimes\phi_\ell)u,
\;v
\right\rangle_{\mathbb{H}_o}.
\]
Since this holds for all $u,v\in\mathbb{H}_o$, we conclude that
\[
\Sigma_{ij}^{(n)}
=
\sum_{k=1}^n\sum_{\ell=1}^n
\left(
\int_\Omega F_k(A'(i))\,F_\ell(A'(j))\,d\mu
\right)
(\phi_k\otimes\phi_\ell).
\]
where $(\phi_k\otimes\phi_\ell)u:=\langle u,\phi_\ell\rangle_{\mathbb{H}_o}\,\phi_k$.
Let $\mathbf g=(g_1,\ldots,g_{d})\in\mathbb{H}_o^{d}$ with $g_i\in\operatorname{Ran}(P_n)$ for each $i$.
Then $\mathbf P_n\mathbf g=\mathbf g$, and therefore
\begin{align*}
\langle \mathbf g,\Sigma_{A'}\mathbf g\rangle_{\mathbb{H}_o^{d}}
&=
\langle \mathbf P_n \mathbf g,\Sigma_{A'} \mathbf P_n\mathbf g\rangle_{\mathbb{H}_o^{d}}
\\
&=
\langle \mathbf  \mathbf g,\mathbf P_n^* \Sigma_{A'} \mathbf P_n\mathbf g\rangle_{\mathbb{H}_o^{d}}
\\
&=
\langle \mathbf  \mathbf g,\mathbf P_n \Sigma_{A'} \mathbf P_n\mathbf g\rangle_{\mathbb{H}_o^{d}}
=
\langle \mathbf g,\Sigma_{A'}^{(n)}\mathbf g\rangle_{\mathbb{H}_o^{d}}.
\end{align*}

\textbf{Step 3: Finite–dimensional approximation of the target and auxiliary components.}
\\[0.4em]

We equip the output space $\mathbb H_o\times \H^*$ with the product pairing against
$\mathbb H_o\times \H$,
\[
\big\langle (y,x),\ (f,g)\big\rangle_{\mathrm{out}}
:= \langle y,f\rangle_{\mathbb H_o}+\langle g,x\rangle_{\H,\H^*}.
\]

Fix orthonormal bases $\{\phi_k\}_{k\ge1}$ of $\mathbb H_o$ and
$\{\psi_\ell\}_{\ell\ge1}$ of $\H$.
For $(y,x)\in\mathbb H_o\times\H^*$ define the truncated output–coordinate map
\[
\Pncoef(y,x)
:=
\big(
\langle y,\phi_1\rangle_{\mathbb H_o},\ldots,\langle y,\phi_n\rangle_{\mathbb H_o},
\ \langle \psi_1,x\rangle_{\H,\H^*},\ldots,\langle \psi_n,x\rangle_{\H,\H^*}
\big)^\top
\in\mathbb R^{2n}.
\]

On the input side, let
\[
V_n
:=
\mathrm{span}\big\{(\phi_k,0,\ldots,0),\ldots,(0,\ldots,0,\phi_k):1\le k\le n\big\}
\subset \mathbb H_o^{d},
\]
and let $H_n:V_n\to\mathbb R^{dn}$ denote the coordinate map in the canonical
basis of $V_n$.
Let $P_n^{\mathrm{in}}$ be the orthogonal projection onto $V_n$
(applied componentwise in $\mathbb H_o^{d}$).

We define the finite–dimensional matrix
\[
B^n\in\mathbb R^{2n\times dn}
\]
by the relation
\[
\boxed{\qquad
\Pncoef\!\big(\mathcal{S}\,x\big)
\;=\;
B^n\,H_n(x),
\qquad x\in V_n.
\qquad}
\]

Equivalently, if $\{e_j\}_{j=1}^{dn}$ denotes the canonical input basis of $V_n$,
then
\begin{align*}
(B^n)_{k,j}
&=
\big\langle \mathcal{S}_Y(e_j),\phi_k\big\rangle_{\mathbb H_o},
&& 1\le k\le n,\\
(B^n)_{n+\ell,\,j}
&=
\big\langle \psi_\ell,\mathcal{S}_X(e_j)\big\rangle_{\H,\H^*},
&& 1\le \ell\le n.
\end{align*}

For $A',\xi^{A'}\in\mathcal V$ define the truncated output coefficient vectors
\[
\mathbf Z^{\,n}
:=
(Z^{A'}_1,\ldots,Z^{A'}_n)^\top\in\mathbb R^n,
\qquad
\boldsymbol\chi^{\,n}
:=
(\chi^{A'}_1,\ldots,\chi^{A'}_n)^\top\in\mathbb R^n,
\]
where
\[
Z^{A'}_k:=\langle Y^{A'},\phi_k\rangle_{\mathbb H_o},
\qquad
\chi^{A'}_\ell:=\langle \psi_\ell,X^{A'}\rangle_{\H,\H^*}.
\]

By construction of $B^n$ we have
\[
B^n\Big(F_{1:n}(A')+F_{1:n}(\xi^{A'})\Big)
=
\Pncoef\!\big(\mathcal{S}\,P_n^{\mathrm{in}}(A'+\xi^{A'})\big),
\]
and hence
\small
\begin{equation}\label{SEMA}
\begin{bmatrix}
Z^{A'}_1\\ \vdots\\ Z^{A'}_n\\
\chi^{A'}_1\\ \vdots\\ \chi^{A'}_n
\end{bmatrix}
=
B^n
\Big(
F_{1:n}(A')+F_{1:n}(\xi^{A'})
\Big)
+\delta_n(A'),
\end{equation}
\normalsize
where the truncation error is
\begin{equation}\label{delta}
\delta_n(A')
:=
\Pncoef\!\big(
\mathcal{S}(A'+\xi^{A'})
-
\mathcal{S}\,P_n^{\mathrm{in}}(A'+\xi^{A'})
\big)
\in\mathbb R^{2n}.
\end{equation}

Finally, for any $a\in \mathbb{H}_o^{d}$, trivially we have
\begin{align}\label{PnS}
\lim_{n\to\infty}\lVert a-P_n^{\mathrm{in}}a \rVert_{\mathbb{H}_o^{d}}=0,
\end{align}
since $\{\phi_k\}_{k\ge1}$ is an orthonormal basis of $\mathbb H_o$ and
$V_n$ exhausts $\mathbb H_o^{d}$.

		\noindent\textbf{Step 4:  Approximate the cost using the finite dimensional approximation from the previous step}
		\\
		For $1\le k\le n$, let $\textbf{v}_{n,k}=B^n_{k,.}-\sum_{l=1}^{n}\lambda_{k,l}^{\beta}B^n_{n+l,.}$. From \eqref{RAlim} we have that	for any $A''\in \mathcal{V}$,
		\begin{align}\label{R_A}
			R_{A''}(T) 
			&=
			\lim_{n\to\infty}\sum_{k=1}^n\int_\Omega  \left(Z_k^{A''}- \sum_{l=1}^n \lambda_{k,l}^{\beta}\chi_l^{A''}\right)^2d\mu \nonumber
			\\
			&=
			\lim_{n\to\infty}\sum_{k=1}^n\int_\Omega  \bigg(B^n_{k,.}(F_{1:n}(A'')+F_{1:n}(\xi^{A''}))+(\delta_n(A''))(k)
			\nonumber
			\\
			&- \sum_{l=1}^n\lambda_{k,l}^{\beta}\bigg(B^n_{n+l,.}(F_{1:n}(A'')+F_{1:n}(\xi^{A''})) +(\delta_n(A''))(n+l)\bigg) \bigg)^2d\mu \nonumber
			\\
			&=
\lim_{n\to\infty} \bigg(\sum_{k=1}^n\textbf{v}_{n,k}
\int_\Omega
\bigg(F_{1:n}(A'')+F_{1:n}(\xi^{A''})\bigg)
\bigg(F_{1:n}(A'')+F_{1:n}(\xi^{A''})\bigg)^T
d\mu\, \textbf{v}_{n,k}^T\nonumber
			\\
			&+
			\sum_{k=1}^n\int_\Omega  \bigg((\delta_n(A''))(k)-\sum_{l=1}^n\lambda_{k,l}^{\beta}(\delta_n(A''))(n+l)\bigg)^2 d\mu \nonumber
			\\
			&+
			2\sum_{k=1}^n\int_\Omega  \bigg(B^n_{k,.}(F_{1:n}(A'')+F_{1:n}(\xi^{A''}))- \sum_{l=1}^n\lambda_{k,l}^{\beta}\bigg(B^n_{n+l,.}(F_{1:n}(A'')+F_{1:n}(\xi^{A''}))\bigg) \bigg)\nonumber
			\\
			&\cdot\bigg((\delta_n(A''))(k)-\sum_{l=1}^n\lambda_{k,l}^{\beta}(\delta_n(A''))(n+l)\bigg)d\mu \bigg).
		\end{align}
		The term
		\begin{align*}
			2\sum_{k=1}^n&\int_\Omega  \bigg(B^n_{k,.}(F_{1:n}(A'')+F_{1:n}(\xi^{A''}))- \sum_{l=1}^n\lambda_{k,l}^{\beta}\bigg(B^n_{n+l,.}(F_{1:n}(A'')+F_{1:n}(\xi^{A''}))\bigg) \bigg)
			\\
			&\cdot \bigg((\delta_n(A''))(k)-\sum_{l=1}^n\lambda_{k,l}^{\beta}(\delta_n(A''))(n+l)\bigg)d\mu 
		\end{align*}
		is readily dominated by (using the Cauchy Schwarz-inequality, first for the integral and then for the sum)
		\begin{align*}
			&2\sum_{k=1}^n\left(\int_\Omega  \left(B^n_{k,.}(F_{1:n}(A'')+F_{1:n}(\xi^{A''}))-\sum_{l=1}^n\lambda_{k,l}^{\beta}\left(B^n_{n+l,.}(F_{1:n}(A'')+F_{1:n}(\xi^{A''}))\right) \right)^2 d\mu \right)^{\frac12} 
			\\
			&\cdot\left(\int_\Omega  \left((\delta_n(A''))(k)-\sum_{l=1}^n\lambda_{k,l}^{\beta}(\delta_n(A''))(n+l)\right)^2 d\mu \right)^{\frac12}
			\\
			&\le
			2\left(\sum_{k=1}^n\int_\Omega  \left(B^n_{k,.}(F_{1:n}(A'')+F_{1:n}(\xi^{A''}))- \sum_{l=1}^n\lambda_{k,l}^{\beta}\left(B^n_{n+l,.}(F_{1:n}(A'')+F_{1:n}(\xi^{A''}))\right) \right)^2 d\mu \right)^{\frac12} 
			\\
			&\times\left(\sum_{k=1}^n\int_\Omega  \left((\delta_n(A''))(k)-\sum_{l=1}^n\lambda_{k,l}^{\beta}(\delta_n(A''))(n+l)\right)^2 d\mu \right)^{\frac12}.
		\end{align*}
		This term will converge to zero since, as we will see,  
		\begin{align}\label{deltatermen}
			\sum_{k=1}^n\int_\Omega  \left((\delta_n(A''))(k)-\sum_{l=1}^n\lambda_{k,l}^{\beta}(\delta_n(A''))(n+l)\right)^2 d\mu 
		\end{align}
		converges to zero, while we will show that the term 
		\begin{align}\label{bdd}
			\sum_{k=1}^n\int_\Omega  \left(B^n_{k,.}(F_{1:n}(A'')+F_{1:n}(\xi^{A''}))- \sum_{l=1}^n\lambda_{k,l}^{\beta}\left(B^n_{n+l,.}(F_{1:n}(A'')+F_{1:n}(\xi^{A''}))\right) \right)^2 d\mu ,
		\end{align}
		is bounded. First, we will show that \eqref{deltatermen} converges to zero. Note that $(\delta_n(A''))(k)=F_k((\mathcal{S}_Y-\mathcal{S}_YP_n^{\mathrm{in}})(A''+\xi^{A''}))$ for $1\le k\le n$. By Bessel's inequality and Parseval's identity, 
        
        \begin{align*}
            \sum_{k=1}^n(\delta_n(A''))(k)^2
            &\le 
            \sum_{k=1}^\infty\left(F_k((\mathcal{S}_Y-\mathcal{S}_YP_n^{\mathrm{in}})(A''+\xi^{A''}))\right)^2\\
            &=\lVert(\mathcal{S}_Y-\mathcal{S}_YP_n^{\mathrm{in}})(A''+\xi^{A''})\rVert_{\mathbb{H}_o}^2\\
            &\le
            2\lVert \mathcal{S}_Y(I-P_n^{\mathrm{in}})A''\rVert_{\mathbb{H}_o}^2+ 2\lVert \mathcal{S}_Y(I-P_n^{\mathrm{in}})\xi^{A''}\rVert_{\mathbb{H}_o}^2\\
            &\le
            2\lVert \mathcal{S}\rVert^2\left( \lVert (I-P_n^{\mathrm{in}})A''\rVert_{\mathbb{H}_o^{d}}^2+ \lVert (I-P_n^{\mathrm{in}})\xi^{A''}\rVert_{\mathbb{H}_o^{d}}^2\right),
        \end{align*}
        where the right-most side converges to zero $\mu$-a.e.. Furthermore from the above inequality we also have,
        \begin{align}\label{dbd}
            \sum_{k=1}^n(\delta_n(A''))(k)^2
            &\le
            4\lVert \mathcal{S}\rVert^2\left( \lVert A''\rVert_{\mathbb{H}_o^{d}}^2+ \lVert \xi^{A''}\rVert_{\mathbb{H}_o^{d}}^2\right):=\tilde{M},
        \end{align}
        where $\int_\Omega \tilde{M}d\mu <\infty$. Analogously we also have that 
                \begin{align}\label{dbd2}
            \sum_{k=n+1}^{2n}(\delta_n(A''))(k)^2
            &\le
            4\lVert \mathcal{S}_X\rVert^2\left( \lVert A''\rVert_{\mathbb{H}_o^{d}}^2+ \lVert \xi^{A''}\rVert_{\mathbb{H}_o^{d}}^2\right),
        \end{align}
        It follows from the dominated convergence theorem that
        \begin{align}\label{limdelta}
            \lim_{n\to\infty}\int_\Omega \sum_{k=1}^n(\delta_n(A''))(k)^2d\mu =0.
        \end{align}
Recall that we defined,
\[
S_n^\beta
:=
\sum_{k=1}^n\sum_{\ell=1}^n
\lambda_{k,\ell}^{\beta}\,\phi_k\otimes\psi_\ell
\;\in\;
\mathbb{H}_o\widehat{\otimes}\H
\;\cong\;
\mathcal{HS}(\H^*,\mathbb{H}_o).
\]
We denote by
\[
T_{S_n^\beta}
:\H^*\to\mathbb{H}_o
\]
the Hilbert--Schmidt operator canonically associated with \(S_n^\beta\), defined by
\[
T_{S_n^\beta}(x^*)
:=
\left\langle S_n^\beta,\,x^* \right\rangle_{\H,\H^*}
=
\sum_{k=1}^n\sum_{\ell=1}^n
\lambda_{k,\ell}^{\beta}\,x^*(\psi_\ell)\,\phi_k,
\qquad x^*\in\H^*.
\]
We proceed to bound the latter part of $\delta_n(A'')$, 
\begin{align}\label{deltainl}
\sum_{k=1}^n\left(\sum_{\ell=1}^n\lambda_{k,\ell}^{\beta}\,(\delta_n(A''))(n+\ell)\right)^2
&=
\sum_{k=1}^n\left(\sum_{\ell=1}^n\lambda_{k,\ell}^{\beta}\,
\pairHH{\psi_\ell}{
\mathcal{S}_{X}\!\left(A''+\xi^{A''}\right)
-\mathcal{S}_{X}P_n^{\mathrm{in}}\!\left(A''+\xi^{A''}\right)
}\right)^2\nonumber
\\
&=
\left\lVert
\sum_{k=1}^n\sum_{\ell=1}^n\lambda_{k,\ell}^{\beta}\,
\pairHH{\psi_\ell}{
\mathcal{S}_{X}\!\left(A''+\xi^{A''}\right)
-\mathcal{S}_{X}P_n^{\mathrm{in}}\!\left(A''+\xi^{A''}\right)
}\,
\phi_k
\right\rVert_{\mathbb{H}_o}^2\nonumber
\\
&=
\left\lVert
\left\langle
\sum_{k=1}^n\sum_{\ell=1}^n\lambda_{k,\ell}^{\beta}\,
\phi_k\otimes\psi_\ell,\;
\mathcal{S}_{X}\!\left(A''+\xi^{A''}\right)
-\mathcal{S}_{X}P_n^{\mathrm{in}}\!\left(A''+\xi^{A''}\right)
\right\rangle
\right\rVert_{\mathbb{H}_o}^2\nonumber
\\
&=
\big\lVert
T_{S_n^\beta}\!\big(
\mathcal{S}_{X}\!\left(A''+\xi^{A''}\right)
-\mathcal{S}_{X}P_n^{\mathrm{in}}\!\left(A''+\xi^{A''}\right)
\big)
\big\rVert_{\mathbb{H}_o}^2\nonumber
\\
&\le
\left\lVert
\mathcal{S}_{X}\!\left(A''+\xi^{A''}\right)
-\mathcal{S}_{X}P_n^{\mathrm{in}}\!\left(A''+\xi^{A''}\right)
\right\rVert_{\H^*}^2\,
\lVert T_{S_n^\beta}\rVert_{\mathcal{HS}(\H^*,\mathbb{H}_o)}^2 .
\end{align}

Using the identification
\(\mathcal{HS}(\H^*,\mathbb{H}_o)\cong\mathbb{H}_o\widehat{\otimes}\H\),
orthonormality of \(\{\phi_k\}_{k\ge1}\) and \(\{\psi_\ell\}_{\ell\ge1}\),
and Bessel's inequality, we obtain
\begin{align*}
\lVert T_{S_n^\beta}\rVert_{\mathcal{HS}(\H^*,\mathbb{H}_o)}^2
&=
\sum_{\ell=1}^n\left\lVert
\sum_{k=1}^n \lambda_{k,\ell}^{\beta}\,\phi_k
\right\rVert_{\mathbb{H}_o}^2
=
\sum_{\ell=1}^n\sum_{k=1}^n (\lambda_{k,\ell}^{\beta})^2
\\
&\le
\sum_{\ell=1}^\infty\sum_{k=1}^\infty (\lambda_{k,\ell}^{\beta})^2
=
\lVert\beta\rVert_{\mathcal{HS}(\H^*,\mathbb{H}_o)}^2
=
\lVert T \rVert_{V_1}^2 .
\end{align*}

        \noindent Combining \eqref{deltainl} with
        \begin{align*}
        \left\lVert \mathcal{S}_{X}\left(A''+\xi^{A''}\right)-\mathcal{S}_{X}P_n^{\mathrm{in}}\left(A''+\xi^{A''}\right) \right\rVert_{\H^*}^2
        &=
        \left\lVert \mathcal{S}_{X}\left(A''+\xi^{A''}\right)-\mathcal{S}_{X}P_n^{\mathrm{in}}\left(A''+\xi^{A''}\right)\right\rVert_{\H^*}^2
        \\
        &\le
        \lVert \mathcal{S} \rVert^2 \left(2\lVert(I-P_n^{\mathrm{in}})A''\rVert_{\mathbb{H}_o^{d}}^2 + 2\lVert(I-P_n^{\mathrm{in}})\xi^{A''}\rVert_{\mathbb{H}_o^{d}}^2\right),
        \end{align*}
which converges to zero, implying (together with \eqref{deltainl} and the bound $\lVert T_{S_n^\beta}\rVert_{\mathcal{HS}(\H^*,\mathbb{H}_o)}^2\le \lVert T \rVert_{V_1}^2$) that the left-most side of \eqref{deltainl} converges to zero $\mu$-a.e..
 Moreover
        \begin{align}\label{deltabound}
        \sum_{k=1}^n\left(\sum_{l=1}^n\lambda_{k,l}^{\beta}(\delta_n(A''))(n+l)\right)^2
        \le 
        \lVert  T \rVert_{V_1}^2 \lVert \mathcal{S} \rVert^2 \left(4\lVert A''\rVert_{\mathbb{H}_o^{d}}^2 + 4\lVert \xi^{A''}\rVert_{\mathbb{H}_o^{d}}^2\right)
        \end{align}
        which is $d\mu$-integrable and therefore by the dominated convergence theorem
        \begin{align}\label{limlambdadelta}
            \lim_{n\to\infty}\int_\Omega \sum_{k=1}^n\left(\sum_{l=1}^n\lambda_{k,l}^{\beta}(\delta_n(A''))(n+l)\right)^2d\mu =0
        \end{align}
        Expanding the squares in \eqref{deltatermen} we find
		\begin{align}\label{smalldelta}
			&\sum_{k=1}^n\int_\Omega  \left((\delta_n(A''))(k)-\sum_{l=1}^n\lambda_{k,l}^{\beta}(\delta_n(A''))(n+l)\right)^2 d\mu \nonumber
			\\
			&\le
			2\int_\Omega   \sum_{k=1}^n(\delta_n(A''))(k)^2 d\mu
			+
			4\sum_{k=1}^n\int_\Omega \left(\sum_{l=1}^n\lambda_{k,l}^{\beta}(\delta_n(A''))(n+l) \right)^2d\mu ,
		\end{align}
        which converges to zero due to \eqref{limdelta} and \eqref{limlambdadelta}.
		We shall now establish that \eqref{bdd} is indeed bounded. We have,
		\begin{align*}
			&\sum_{k=1}^n\int_\Omega  \left(B^n_{k,.}(F_{1:n}(A'')+F_{1:n}(\xi^{A''}))- \sum_{l=1}^n\lambda_{k,l}^{\beta}\left(B^n_{n+l,.}(F_{1:n}(A'')+F_{1:n}(\xi^{A''}))\right) \right)^2 d\mu 
			\\
			&\le \sum_{k=1}^n \int_\Omega  2\left(B^n_{k,.}(F_{1:n}(A'')+F_{1:n}(\xi^{A''})) \right)^2 d\mu 
			+
			2\sum_{k=1}^n\int_\Omega \left(\sum_{l=1}^n\lambda_{k,l}^{\beta}\left(B^n_{n+l,.}(F_{1:n}(A'')+F_{1:n}(\xi^{A''}))\right)\right)^2 d\mu  
			\\
			&\le 
			\sum_{k=1}^n4\int_\Omega \left(\left(Z_k^{A''}\right)^2+ (\delta_n(A''))(k)^2\right) d\mu 
			+4\sum_{k=1}^n\int_\Omega \left(\sum_{l=1}^n\lambda_{k,l}^{\beta}\left(\chi^{A''}_l-(\delta_n(A''))(n+l)\right)\right)^2 d\mu  
			\\
			&\le 
			\sum_{k=1}^n4\int_\Omega \left(Z_k^{A''}\right)^2d\mu +4\int_\Omega \sum_{k=1}^n (\delta_n(A'')) (k)^2d\mu 
			+4\sum_{k=1}^n\int_\Omega  \sum_{l=1}^n\left(\lambda_{k,l}^{\beta}\chi^{A''}_l\right)^2 d\mu 
            \\
            &+4\sum_{k=1}^n \sum_{l=1}^n\left(\lambda_{k,l}^{\beta}\right)^2\int_\Omega  (\delta_n(A''))(n+l)^2 d\mu 
            \\
			&\le 4\int_\Omega \lVert Y^{A''}\rVert_{\mathbb{H}_o}^2d\mu 
            +
            32\max\left(\lVert T \rVert_{V_1}^2,1 \right)\lVert \mathcal{S} \rVert^2\int_\Omega  \left(\lVert A''\rVert_{\mathbb{H}_o^{d}}^2 + \lVert \xi^{A''}\rVert_{\mathbb{H}_o^{d}}^2\right)d\mu ,
		\end{align*}
		where we utilized \eqref{SEMA}, \eqref{dbd} the fact that (due to \eqref{dbd2})
        \begin{align*}
        \sum_{k=1}^n \sum_{l=1}^n\left(\lambda_{k,l}^{\beta}\right)^2\int_\Omega  (\delta_n(A''))(n+l)^2 d\mu 
        &\le 
        \sum_{k=1}^n \sum_{l=1}^n\left(\lambda_{k,l}^{\beta}\right)^2 \sum_{r=n+1}^{2n} \int_\Omega(\delta_n(A''))(r)^2 d\mu 
        \\
        &\le 
        \lVert T\rVert_{V_1}^2 \int_\Omega   4\lVert \mathcal{S}\rVert^2\left( \lVert A''\rVert_{\mathbb{H}_o^{d}}^2+ \lVert \xi^{A''}\rVert_{\mathbb{H}_o^{d}}^2\right)  d\mu 
        \end{align*}
        and
        $$\sum_{k=1}^n\int_\Omega  \sum_{l=1}^n\left(\lambda_{k,l}^{\beta}\chi^{A''}_l\right)^2 d\mu 
        \le
        4\lVert T \rVert_{V_1}^2 \lVert S \rVert^2\int_\Omega  \left(\lVert A''\rVert_{\mathbb{H}_o^{d}}^2 + \lVert \xi^{A''}\rVert_{\mathbb{H}_o^{d}}^2\right)d\mu ,$$
        by a calculation analogous to \eqref{deltabound}. This establishes that \eqref{bdd} is indeed bounded.
        Returning to \eqref{R_A} we now have
		for any $A''\in \mathcal{V}$,
\begin{align}\label{R_Afinal}
			R_{A''}(T) 
			&=
			\lim_{n\to\infty}\sum_{k=1}^n \textbf{v}_{n,k}\int_\Omega  \left(F_{1:n}(A'')+F_{1:n}(\xi^{A''})\right)\left(F_{1:n}(A'')+F_{1:n}(\xi^{A''})\right)^T d\mu  \textbf{v}_{n,k}^T.
\end{align}
  \textbf{Step 5: Verify that the cross term vanishes and that the pure baseline component term is invariant.}

Let $A'',A'\in\mathcal V$ be arbitrary and let $\xi^{A''},\xi^{A'}\in\mathcal V$
be the corresponding baseline components. We index coordinates in $\mathbb R^{dn}$ by pairs $(i,k)$ with
$i\in\{1,\ldots, d \}$ and $k\in\{1,\ldots,n\}$.

\emph{Cross term.}
Fix $(i,k)$ and $(j,\ell)$. Let $\iota_r:\mathbb H_o\to\mathbb H_o^{d}$ denote the canonical injection
into the $r$th component, i.e.\ $\iota_r(h)=(0,\ldots,0,h,0,\ldots,0)$. For $u,v\in\mathcal V$ we interpret
$u\otimes v$ as the rank--one operator on $\mathcal V$ defined by
\[
(u\otimes v)(h):=\langle h,v\rangle_{\mathcal V}\,u,\qquad h\in\mathcal V.
\]
Define the operator $\mathcal{I}:\mathcal{V}\to\mathcal{V}$ by
\[
\mathcal{I}u=\Big(\int_\Omega A''\otimes \xi^{A''}\,d\mu\Big)u,
\qquad u\in\mathcal V.
\]
Here $\int_\Omega A''\otimes\xi^{A''},d\mu$ is understood as a Bochner integral in the Hilbert space $\mathcal{HS}(\mathcal V)$.
Then
\begin{align}\label{Ae}
\Big(\int_\Omega F_{1:n}(A'')F_{1:n}(\xi^{A''})^\top\,d\mu\Big)_{(i,k),(j,\ell)}
&=
\int_\Omega \langle A''(i),\phi_k\rangle_{\mathbb H_o}\,
          \langle \xi^{A''}(j),\phi_\ell\rangle_{\mathbb H_o}\,d\mu
\nonumber\\
&=
\int_\Omega \langle A'',\iota_i(\phi_k)\rangle_{\mathcal V}\,
          \langle \xi^{A''},\iota_j(\phi_\ell)\rangle_{\mathcal V}\,d\mu
\nonumber\\
&=
\int_\Omega \Big\langle \big(A''\otimes \xi^{A''}\big)\,\iota_j(\phi_\ell),\;\iota_i(\phi_k)\Big\rangle_{\mathcal V}\,d\mu
\nonumber\\
&=
\Big\langle \mathcal{I}\,\iota_j(\phi_\ell),\;\iota_i(\phi_k)\Big\rangle_{\mathcal V},
\end{align}
above we identified $\iota_i(\phi_k)\in\mathbb H_o^{d}$ with the corresponding constant element of $\mathcal V=L^2(\Omega;\mathbb H_o^{d})$, i.e. $\omega\mapsto \iota_i(\phi_k)$, and kept the same notation.
By assumption $\int_\Omega A''\otimes \xi^{A''}\,d\mu=0$, we have $\mathcal I=0$, and therefore
\[
\int_\Omega F_{1:n}(A'')F_{1:n}(\xi^{A''})^\top\,d\mu = 0
\quad\text{in }\mathbb R^{dn\times dn}.
\]

\medskip

\emph{Pure baseline component term and invariance.}
Fix $(i,k)$ and $(j,\ell)$. By the standing assumption
$\int_\Omega \xi^{A}\otimes \xi^{A}\,d\mu=\Sigma_\xi$, we obtain
\begin{align}\label{obseqvi}
\Big(\int_\Omega F_{1:n}(\xi^{A''})F_{1:n}(\xi^{A''})^\top\,d\mu\Big)_{(i,k),(j,\ell)}
&=
\int_\Omega
\langle \xi^{A''}(i),\phi_k\rangle_{\mathbb H_o}\,
\langle \xi^{A''}(j),\phi_\ell\rangle_{\mathbb H_o}\,d\mu
\nonumber\\
&=
\int_\Omega
\Big\langle \big(\xi^{A''}\otimes \xi^{A''}\big)\,\iota_j(\phi_\ell),\;
\iota_i(\phi_k)\Big\rangle_{\mathcal V}\,d\mu
\nonumber\\
&=
\Big\langle
\Big(\int_\Omega \xi^{A''}\otimes \xi^{A''}\,d\mu\Big)\,\iota_j(\phi_\ell),\;
\iota_i(\phi_k)
\Big\rangle_{\mathcal V}.
\end{align}
By the standing assumption
\[
\int_\Omega \xi^{A}\otimes \xi^{A}\,d\mu=\Sigma_\xi
\qquad \text{for all admissible }A,
\]
the right hand side depends only on $\Sigma_\xi$ and is therefore independent of the admissible source.
Consequently,
\[
\int_\Omega F_{1:n}(\xi^{A''})F_{1:n}(\xi^{A''})^\top\,d\mu
=
\int_\Omega F_{1:n}(\xi^{A'})F_{1:n}(\xi^{A'})^\top\,d\mu.
\]

\medskip

\noindent\textbf{Step 6: Source-continuity in the cost}
        \\
       Let $A',A''\in\mathcal{V}$ be arbitrary. Recall \eqref{R_Afinal},
        \begin{align*}
			R_{A^*}(T)
			&=
			\lim_{n\to\infty} \sum_{k=1}^n \textbf{v}_{n,k}\int_\Omega  \left(F_{1:n}(A^*)+F_{1:n}(\xi^{A^*})\right)\left(F_{1:n}(A^*)+F_{1:n}(\xi^{A^*})\right)^T d\mu  \textbf{v}_{n,k}^T,
        \end{align*}
        for any $A^*\in\mathcal{V}$
        Setting $A^*=0$ above gives us,
        \begin{align}
        \lim_{n\to\infty} \sum_{k=1}^n \textbf{v}_{n,k}\int_\Omega  F_{1:n}(\xi^{0})F_{1:n}(\xi^{0})^T d\mu  \textbf{v}_{n,k}^T     
            &=
			\int_\Omega \left\lVert
            S_1(\xi^{0}) -  
            T\left(\mathcal{S}_X\xi^{0}\right)
        \right\rVert_{\mathbb{H}_o}^2 d\mu ,
        \end{align}
        which is finite. 
        By the standing assumption $\int_\Omega \xi^{A}\otimes \xi^{A}\,d\mu=\Sigma_\xi$, we obtain
\[
\Big(\int_\Omega F_{1:n}(\xi^{A''})F_{1:n}(\xi^{A''})^\top\,d\mu\Big)_{(i,k),(j,\ell)}
=
\big\langle \Sigma_\xi\,\iota_j(\phi_\ell),\;\iota_i(\phi_k)\big\rangle_{\mathcal V},
\]
which is independent of the admissible source. Combining this with \eqref{obseqvi} gives us,

        \begin{align}
        \sum_{k=1}^n \textbf{v}_{n,k}\int_\Omega  F_{1:n}(\xi^{0})F_{1:n}(\xi^{0})^T d\mu  \textbf{v}_{n,k}^T     
            &=
			\sum_{k=1}^n \textbf{v}_{n,k}\int_\Omega  F_{1:n}(\xi^{A''})F_{1:n}(\xi^{A''})^T d\mu  \textbf{v}_{n,k}^T\\
            &=
            \sum_{k=1}^n \textbf{v}_{n,k}\int_\Omega  F_{1:n}(\xi^{A'})F_{1:n}(\xi^{A'})^T d\mu  \textbf{v}_{n,k}^T 
        \end{align}
        which implies that all their limits are equal and finite. Setting $\xi=0$ gives us
        \begin{align}
        \lim_{n\to\infty} \sum_{k=1}^n \textbf{v}_{n,k}\int_\Omega  F_{1:n}(A'')F_{1:n}(A'')^T d\mu  \textbf{v}_{n,k}^T:=h_{A''}(T)     
            &=
			\int_\Omega \left\lVert
            \mathcal{S}_Y(A'') -  
            T\left(S_{2}A''\right)
       \right\rVert_{\mathbb{H}_o}^2 d\mu .
        \end{align}
        Which is also finite. Due to \eqref{Ae} the limit of
        $ \textbf{v}_{n,k}\int_\Omega  F_{1:n}(A'')F_{1:n}(\xi^{A''})^T d\mu  \textbf{v}_{n,k}^T$
        exists and is zero. Having established that the above individual limits exist, we may therefore split   
        \begin{align*}
			R_{A''}(T)  -R_{A'}(T) 
			&=
			\lim_{n\to\infty} \sum_{k=1}^n \textbf{v}_{n,k}\int_\Omega  \left(F_{1:n}(A'')+F_{1:n}(\xi^{A''})\right)\left(F_{1:n}(A'')+F_{1:n}(\xi^{A''})\right)^T d\mu  \textbf{v}_{n,k}^T
            \\
            &-
            \lim_{n\to\infty} \sum_{k=1}^n \textbf{v}_{n,k}\int_\Omega  \left(F_{1:n}(A')+F_{1:n}(\xi^{A'})\right)\left(F_{1:n}(A')+F_{1:n}(\xi^{A'})\right)^T d\mu  \textbf{v}_{n,k}^T
            \\
            &=
            \lim_{n\to\infty} \sum_{k=1}^n \textbf{v}_{n,k}\int_\Omega  F_{1:n}(A'')F_{1:n}(A'')^T d\mu  \textbf{v}_{n,k}^T
            -
            \lim_{n\to\infty} \sum_{k=1}^n \textbf{v}_{n,k}\int_\Omega  F_{1:n}(A')F_{1:n}(A')^T d\mu  \textbf{v}_{n,k}^T
            \\
            &+2\lim_{n\to\infty} \sum_{k=1}^n \textbf{v}_{n,k}\int_\Omega  F_{1:n}(A'')F_{1:n}(\xi^{A''})^T d\mu  \textbf{v}_{n,k}^T
            -2\lim_{n\to\infty} \sum_{k=1}^n \textbf{v}_{n,k}\int_\Omega  F_{1:n}(A')F_{1:n}(\xi^{A'})^T d\mu  \textbf{v}_{n,k}^T
            \\
            &+\lim_{n\to\infty} \sum_{k=1}^n \textbf{v}_{n,k}\int_\Omega  F_{1:n}(\xi^{A''})F_{1:n}(\xi^{A''})^T d\mu  \textbf{v}_{n,k}^T
            -\lim_{n\to\infty} \sum_{k=1}^n \textbf{v}_{n,k}\int_\Omega  F_{1:n}(\xi^{A'})F_{1:n}(\xi^{A'})^T d\mu  \textbf{v}_{n,k}^T
            \\
            &=
             \lim_{n\to\infty} \sum_{k=1}^n \textbf{v}_{n,k}\int_\Omega  F_{1:n}(A'')F_{1:n}(A'')^T d\mu  \textbf{v}_{n,k}^T
            -\lim_{n\to\infty} \sum_{k=1}^n \textbf{v}_{n,k}\int_\Omega  F_{1:n}(A')F_{1:n}(A')^T d\mu  \textbf{v}_{n,k}^T.
        \end{align*}
        It follows that
        \begin{align*}
        R_{A''}(T)-R_{A'}(T)=h_{A''}(T)-h_{A'}(T).
        \end{align*}
        Therefore,
        		\begin{align}\label{RDelta}
			\left| R_{A'}(T)-R_{A''}(T)\right|
			&\le
			\left|\int_\Omega\left( \left\lVert \mathcal{S}_Y(A')\right\rVert_{\mathbb{H}_o}^2
			-
			\left\lVert \mathcal{S}_Y(A'')\right\rVert_{\mathbb{H}_o}^2\right)d\mu \right|\nonumber
			\\
			&+
			2\left|\int_\Omega\left( \left\langle \mathcal{S}_Y(A') ,T\left(\mathcal{S}_{X}A'\right)\right\rangle_{\mathbb{H}_o}
			-
			\left\langle \mathcal{S}_Y(A'') ,T\left(\mathcal{S}_{X}A''\right)\right\rangle_{\mathbb{H}_o}\right)d\mu 
			\right|\nonumber
			\\
			&+
			\left|\int_\Omega \left(\left\lVert T\left(S_{X}A'\right)\right\rVert_{\mathbb{H}_o}^2 -
			\left\lVert T\left(\mathcal{S}_XA''\right)\right\rVert_{\mathbb{H}_o}^2\right)d\mu 
			\right|.
		\end{align}
        For the first term on the right-hand side of \eqref{RDelta}
        \begin{align*}
        \left|\int_\Omega\left( \left\lVert \mathcal{S}_Y(A')\right\rVert_{\mathbb{H}_o}^2
			-
			\left\lVert \mathcal{S}_Y(A'')\right\rVert_{\mathbb{H}_o}^2\right)d\mu \right|
            &\le
            \int_\Omega \left|\left\lVert \mathcal{S}_Y(A')\right\rVert_{\mathbb{H}_o}
			+
			\left\lVert \mathcal{S}_Y(A'')\right\rVert_{\mathbb{H}_o}\right|\left|\left\lVert \mathcal{S}_Y(A')\right\rVert_{\mathbb{H}_o}
			-
			\left\lVert \mathcal{S}_Y(A'')\right\rVert_{\mathbb{H}_o}\right|
			d\mu 
            \\
            &\le
            \int_\Omega \left|\left\lVert \mathcal{S}_Y(A')\right\rVert_{\mathbb{H}_o}
			+
			\left\lVert \mathcal{S}_Y(A'')\right\rVert_{\mathbb{H}_o}\right|\left|\left\lVert \mathcal{S}_Y(A')-\mathcal{S}_Y(A'')\right\rVert_{\mathbb{H}_o}
			\right|
			d\mu
            \\
            &=
            \int_\Omega \left|\left\lVert \mathcal{S}_Y(A')\right\rVert_{\mathbb{H}_o}
			+
			\left\lVert \mathcal{S}_Y(A'')\right\rVert_{\mathbb{H}_o}\right|\left|\left\lVert \mathcal{S}_Y(A'-A'')\right\rVert_{\mathbb{H}_o}
			\right|
			d\mu
            \\
            &\le
            \left(\int_\Omega \lVert \mathcal{S}_Y(A'-A'')\rVert_{\mathbb{H}_o}^2d\mu \right)^{\frac12}
            \left(\int_\Omega \left(2\left\lVert \mathcal{S}_Y(A')\right\rVert_{\mathbb{H}_o}^2
			+
			2\left\lVert \mathcal{S}_Y(A'')\right\rVert_{\mathbb{H}_o}^2\right)d\mu \right)^{\frac12}
            \\
            &\le
            \sqrt{2}\lVert \mathcal{S}\rVert^2 \lVert A'-A''\rVert_{\mathcal{V}}\left(\lVert A''\rVert_{\mathcal{V}} + \lVert A'\rVert_{\mathcal{V}}\right),
        \end{align*}
        where we used Cauchy-Schwarz and the reverse triangle inequality and the fact that $\lVert \mathcal{S}_Y \rVert \le \lVert \mathcal{S} \rVert$.
        For the second term we also utilize these inequalities
        \begin{align*}
        &\left|\int_\Omega\left( \left\langle \mathcal{S}_Y(A') ,T\left(\mathcal{S}_{X}A'\right)\right\rangle_{\mathbb{H}_o}
			-
			\left\langle \mathcal{S}_Y(A'') ,T\left(\mathcal{S}_{X}A''\right)\right\rangle_{\mathbb{H}_o}\right)d\mu 
			\right|
            \\
            &=
            \left|\int_\Omega 
             \left\langle \mathcal{S}_Y(A'-A''),T\left(\mathcal{S}_{X}A'\right)\right\rangle_{\mathbb{H}_o}
             +
             \left\langle \mathcal{S}_Y(A''),T\left(\mathcal{S}_{X}(A''-A')\right)\right\rangle_{\mathbb{H}_o}
           d\mu\right|
            \\
            &\le
            \int_\Omega \left|
             \left\langle \mathcal{S}_Y(A'-A''),T\left(\mathcal{S}_{X}A'\right)\right\rangle_{\mathbb{H}_o}\right|d\mu 
             +
             \int_\Omega \left|\left\langle \mathcal{S}_Y(A'') ,T\left(\mathcal{S}_{X}(A''-A')\right)\right\rangle_{\mathbb{H}_o}\right|
           d\mu 		
            \\
            &\le
            \int_\Omega  \lVert \mathcal{S}_Y(A'-A'')\rVert_{\mathbb{H}_o}\lVert  T\left(\mathcal{S}_{X}A'\right)\rVert_{\mathbb{H}_o}d\mu 
+
            \int_\Omega  \lVert \mathcal{S}_Y(A'')\rVert_{\mathbb{H}_o}\lVert  T\left(\mathcal{S}_{X}(A''-A')\right)\rVert_{\mathbb{H}_o}d\mu 
            \\
            &\le
             \lVert \mathcal{S}\rVert \int_\Omega  \lVert A'-A''\rVert_{\mathbb{H}_o}\lVert  \lVert  T \rVert_{V_1}\lVert \mathcal{S}_{X}A'\rVert_{H^*}d\mu 
             +
             \lVert \mathcal{S}\rVert \int_\Omega  \lVert A''\rVert_{\mathbb{H}_o}\lVert  \lVert  T \rVert_{V_1}\lVert \mathcal{S}_X(A''-A')\rVert_{H^*}d\mu 
             \\
             &\le
             \lVert \mathcal{S}\rVert^2\lVert T \rVert_{V_1}\lVert A'-A''\rVert_{\mathcal{V}} \left(\lVert A''\rVert_{\mathcal{V}} + \lVert A'\rVert_{\mathcal{V}}\right).
        \end{align*}
        For the third term, we proceed analogously to the first term,
        \begin{align*}
        &\left|\int_\Omega \left(\left\lVert T\left(\mathcal{S}_XA'\right)\right\rVert_{\mathbb{H}_o}^2 -
			\left\lVert T\left(\mathcal{S}_XA''\right)\right\rVert_{\mathbb{H}_o}^2\right)d\mu 
			\right|
            \\
           &\le
            \int_\Omega \left|\left\lVert T\left(\mathcal{S}_XA'\right)\right\rVert_{\mathbb{H}_o} -
			\left\lVert T\left(\mathcal{S}_XA''\right)\right\rVert_{\mathbb{H}_o}\right|\left|\left\lVert T\left(\mathcal{S}_XA'\right)\right\rVert_{\mathbb{H}_o} +
			\left\lVert T\left(\mathcal{S}_XA''\right)\right\rVert_{\mathbb{H}_o}\right|d\mu 
            \\
            &\le
            \int_\Omega \left\lVert T\left(\mathcal{S}_X(A'-A'')\right)\right\rVert_{\mathbb{H}_o}\left(\left\lVert T\left(\mathcal{S}_XA'\right)\right\rVert_{\mathbb{H}_o} +
			\left\lVert T\left(\mathcal{S}_XA''\right)\right\rVert_{\mathbb{H}_o}\right)d\mu
            \\
            &\le
            \left(\int_\Omega \left\lVert T\left(\mathcal{S}_X(A'-A'')\right)\right\rVert_{\mathbb{H}_o}^2d\mu \right)^{\frac12}
            \left(\int_\Omega \left(2\left\lVert T\left(\mathcal{S}_XA'\right)\right\rVert_{\mathbb{H}_o}^2 +
			2\left\lVert T\left(\mathcal{S}_XA''\right)\right\rVert_{\mathbb{H}_o}^2\right)d\mu \right)^{\frac12} 
            \\
            &\le
            \lVert \mathcal{S}\rVert^2\lVert  T \rVert_{V_1}^2\lVert A'-A''\rVert_{\mathcal{V}} \left(\lVert A''\rVert_{\mathcal{V}} + \lVert A'\rVert_{\mathcal{V}}\right).
        \end{align*}
        This allows us to conclude that
        \begin{align*}
        \left| R_{A'}T-R_{A''}(T)\right|
			&\le  (2+\sqrt{2})\lVert \mathcal{S}\rVert^2 \max\left(1,\lVert T \rVert_{V_1}^2\right)\lVert A'-A''\rVert_{\mathcal{V}} \left(\lVert A''\rVert_{\mathcal{V}} + \lVert A'\rVert_{\mathcal{V}}\right)  
        \end{align*}
        and therefore if $A\in\mathcal{V}$ and $\{A_n\}_{n\in\N}\subset\mathcal{V}$ are such that $A_n\xrightarrow{\mathcal{V}}A$ then
        \begin{align}\label{ContSource}
            \lim_{n\to\infty}R_{A_n}(T)=R_{A}(T), \forall T\in V_1
        \end{align}

		\textbf{Step 7: Optimize over the sources}
		\\
				Recall the definition of $\textbf{v}_n$ from step 4. We utilize \eqref{Ae} and \eqref{obseqvi} when we now return to \eqref{R_Afinal},
		\begin{align*}
			R_{A''}(T)
			&=
			\lim_{n\to\infty}\sum_{k=1}^n\textbf{v}_{n,k}\int_\Omega \left(F_{1:n}(A'') +F_{1:n}(\xi^{A''}) \right)\left(F_{1:n}(A'') +F_{1:n}(\xi^{A''})\right)^T d\mu \textbf{v}_{n,k}^T
			\\
			&=
			\lim_{n\to\infty}\sum_{k=1}^n\textbf{v}_{n,k}\int_\Omega F_{1:n}(A'')F_{1:n}(A'')^T d\mu\textbf{v}_{n,k}^T+\lim_{n\to\infty}\sum_{k=1}^n\textbf{v}_{n,k}\int_\Omega  F_{1:n}(\xi^{A''})F_{1:n}(\xi^{A''})^T d\mu\textbf{v}_{n,k}^T
			\\
			&=\lim_{n\to\infty}\sum_{k=1}^n\textbf{v}_{n,k}\int_\Omega F_{1:n}(A'')F_{1:n}(A'')^T d\mu \textbf{v}_{n,k}^T+\lim_{n\to\infty}\sum_{k=1}^n\textbf{v}_{n,k}\int_\Omega  F_{1:n}(\xi^{A})F_{1:n}(\xi^{A})^T d\mu \textbf{v}_{n,k}^T
		\end{align*}
		and similarly we have
		\begin{align}\label{RAformula}
			R_{A}(T)
			=\lim_{n\to\infty}\sum_{k=1}^n\textbf{v}_{n,k}\int_\Omega  F_{1:n}(A)F_{1:n}(A)^T d\mu \textbf{v}_{n,k}^T+\lim_{n\to\infty}\sum_{k=1}^n\textbf{v}_{n,k}\int_\Omega  F_{1:n}(\xi^{A})F_{1:n}(\xi^{A})^T d\mu \textbf{v}_{n,k}^T.
		\end{align}
		Take a sequence $\{A_n\}_{n\in\N}\subset C_{\mathcal{A}}(A)$ such that $\lim_{n\to\infty}R_{A_n}(T)=\sup_{A'\in C_{\mathcal{A}}(A)}R_{A'}(T)$. Fix any $\Delta>0$. Let $\tilde{A}_\Delta\in C_{\mathcal{A}}(A)$ be such that $\lVert \tilde{A}_\Delta-A\rVert_{\mathcal{V}}<\eta$ where $\eta$ is chosen such that $\left|R_{\tilde{A}_\Delta}(T)-R_{A}(T)\right|<\Delta$, which is possible due to \eqref{ContSource} and the fact that $A\in\bar{\mathcal{A}}$. Define the sets
		$$C_m=\{\tilde{A}_\Delta\}\cup\left(\bigcup_{k=1}^m\{A_k\}\right), m\in\N.$$
		Fix $m\in\N$. Since there are only finitely many elements in $C_m$, we have that 
        $$\sum_{k=1}^n\textbf{v}_{n,k}\int_\Omega F_{1:n}(A'')F_{1:n}(A'')^T d\mu\textbf{v}_{n,k}^T$$ 
        converges uniformly over all $A''\in C_m$ as $n\to\infty$, so we may take $N\in\N$ such that 
		$$\left|\lim_{n\to\infty}\sum_{k=1}^n\textbf{v}_{n,k}\int_\Omega  F_{1:n}(A'')F_{1:n}(A'')^T d\mu\textbf{v}_{n,k}^T -\sum_{k=1}^N\textbf{v}_{N,k}\int_\Omega  F_{1:N}(A'')F_{1:N}(A'')^T d\mu\textbf{v}_{N,k}^T\right|<\Delta, \forall A''\in C_m$$
		and
		$$\left|\lim_{n\to\infty}\sum_{k=1}^n\textbf{v}_{n,k}\int_\Omega  F_{1:n}(A)F_{1:n}(A)^T d\mu\textbf{v}_{n,k}^T -\sum_{k=1}^N\textbf{v}_{N,k}\int_\Omega  F_{1:N}(A)F_{1:N}(A)^T d\mu \textbf{v}_{N,k}^T\right|<\Delta.$$
		For each $1\le r\le N$, let
\[
g_i^{(r)}(s):=\sum_{m=1}^{N} v_{N,r}((i-1)N+m)\,\phi_m(s),
\qquad 1\le i\le  d .
\]
Set $g^{(r)}:=(g_1^{(r)},\ldots,g_{d}^{(r)})$. Then clearly $g_i^{(r)}\in \mathbb{H}_o$. 
Fix $N\in\mathbb N$ and $1\le k\le N$. Define, for $1\le i\le  d $,
\[
g_i^{(N,k)}:=\sum_{m=1}^N \mathbf v_{N,k}((i-1)N+m)\,\phi_m\in\mathbb H_o,
\qquad
g^{(N,k)}:=(g_1^{(N,k)},\ldots,g_{d}^{(N,k)})\in\mathbb H_o^{d}.
\]
Let $P_N:\mathbb H_o\to \mathrm{span}\{\phi_1,\ldots,\phi_N\}$ be the orthogonal projection
(applied componentwise on $\mathcal V$). Recall,
\[
\big(F_{1:N}(u)\big)_{(i,m)}=\langle u(i),\phi_m\rangle_{\mathbb H_o},
\qquad 1\le i\le  d ,\ 1\le m\le N,
\]
so that $F_{1:N}(g^{(N,k)})=\mathbf v_{N,k}$.
Define the finite-rank operator $\Sigma_{A''}^{(N)}:\mathcal H\to\mathcal H$ by
\[
\Sigma_{A''}^{(N)}:=\int_\Omega (P_N A'')\otimes (P_N A'')\,d\mu,
\qquad
(u\otimes v)(h):=\langle h,v\rangle_{\mathcal V}\,u.
\]
Define the pointwise Hilbert product space
\[
\mathcal H:=\mathbb H_o^{d},
\qquad
\langle u,v\rangle_{\mathcal H}:=\sum_{i=1}^{d}\langle u(i),v(i)\rangle_{\mathbb H_o},
\qquad u,v\in\mathcal H .
\]
For $n\in\mathbb N$, let $P_n:\mathbb H_o\to \mathrm{span}\{\phi_1,\ldots,\phi_n\}$ denote the orthogonal projection
(applied componentwise on $\mathcal H$). Also recall that, for $u\in\mathcal H$,
\[
\big(F_{1:n}(u)\big)_{(i,m)}:=\langle u(i),\phi_m\rangle_{\mathbb H_o},
\qquad 1\le i\le  d ,\ 1\le m\le n,
\]
so that $F_{1:n}(u)\in\mathbb R^{ dn}$. Fix $n\in\mathbb N$ and $1\le k\le n$. Define, for $1\le i\le  d $,
\[
g_i^{(n,k)}:=\sum_{m=1}^n \mathbf v_{n,k}((i-1)n+m)\,\phi_m\in\mathbb H_o,
\qquad
g^{(n,k)}:=(g_1^{(n,k)},\ldots,g_{d}^{(n,k)})\in\mathcal H.
\]
By construction,
$
F_{1:n}\big(g^{(n,k)}\big)=\mathbf v_{n,k}
$. Therefore, 
\begin{align}\label{truncen}
&\int_\Omega \big\langle g^{(n,k)},P_nA''\big\rangle_{\mathcal H}^2\,d\mu
\nonumber\\
&=
\int_\Omega 
\Big(\sum_{i=1}^{d}\big\langle g_i^{(n,k)},P_nA''(i)\big\rangle_{\mathbb H_o}\Big)
\Big(\sum_{j=1}^{d}\big\langle g_j^{(n,k)},P_nA''(j)\big\rangle_{\mathbb H_o}\Big)
\,d\mu
\nonumber\\
&=
\sum_{i=1}^{d}\sum_{j=1}^{d}
\int_\Omega 
\big\langle g_i^{(n,k)},P_nA''(i)\big\rangle_{\mathbb H_o}\,
\big\langle g_j^{(n,k)},P_nA''(j)\big\rangle_{\mathbb H_o}\,
d\mu
\nonumber\\
&=
\sum_{i=1}^{d}\sum_{j=1}^{d}
\int_\Omega 
\Big(\sum_{m=1}^{n}\mathbf v_{n,k}((i-1)n+m)\,\langle \phi_m,P_nA''(i)\rangle_{\mathbb H_o}\Big)
\Big(\sum_{\ell=1}^{n}\mathbf v_{n,k}((j-1)n+\ell)\,\langle \phi_\ell,P_nA''(j)\rangle_{\mathbb H_o}\Big)
d\mu
\nonumber\\
&=
\sum_{i=1}^{d}\sum_{j=1}^{d}\sum_{m=1}^{n}\sum_{\ell=1}^{n}
\mathbf v_{n,k}((i-1)n+m)\,
\Big(\int_\Omega 
\langle \phi_m,P_nA''(i)\rangle_{\mathbb H_o}\,
\langle \phi_\ell,P_nA''(j)\rangle_{\mathbb H_o}\,
d\mu\Big)\,
\mathbf v_{n,k}((j-1)n+\ell)
\nonumber\\
&=
\sum_{i=1}^{d}\sum_{j=1}^{d}\sum_{m=1}^{n}\sum_{\ell=1}^{n}
\mathbf v_{n,k}((i-1)n+m)\,
\Big(\int_\Omega 
\langle A''(i),\phi_m\rangle_{\mathbb H_o}\,
\langle A''(j),\phi_\ell\rangle_{\mathbb H_o}\,
d\mu\Big)\,
\mathbf v_{n,k}((j-1)n+\ell)
\nonumber\\
&=
\mathbf v_{n,k}\Big(\int_\Omega F_{1:n}(A'')\,F_{1:n}(A'')^\top\,d\mu\Big)\mathbf v_{n,k}^\top .
\end{align}

Consequently, summing over $k$ yields
\begin{align}\label{truncen-sum}
\sum_{k=1}^N \big\langle g^{(N,k)},\Sigma_{A''}^{(N)}g^{(N,k)}\big\rangle_{\mathcal H}
=
\sum_{k=1}^N
\mathbf v_{N,k}\Big(\int_\Omega F_{1:N}(A'')\,F_{1:N}(A'')^\top\,d\mu\Big)\mathbf v_{N,k}^\top.
\end{align}
By the definition of $C_{\mathcal A}(A)$, step 2 (used to pass between the compressed and regular covariance operators) \eqref{truncen}, and since
$A''\in C_m\subset C_{\mathcal A}(A)$, we obtain for each $1\le k\le N$,

\small
\begin{align*}
\sum_{k=1}^N\mathbf v_{N,k}\Big(\int_\Omega F_{1:N}(A'')F_{1:N}(A'')^\top\,d\mu\Big)\mathbf v_{N,k}^\top
&=
\sum_{k=1}^N\big\langle g^{(N,k)},\Sigma_{A''}^{(N)}g^{(N,k)}\big\rangle_{\mathcal H}
\\
&=
\sum_{k=1}^N\big\langle g^{(N,k)},\Sigma_{A''}g^{(N,k)}\big\rangle_{\mathcal H}
\\
&\le
\sum_{k=1}^N\,\big\langle g^{(N,k)},\Sigma_A g^{(N,k)}\big\rangle_{\mathcal H}
\\
&=
\,\sum_{k=1}^N\big\langle g^{(N,k)},\Sigma_A^{(N)} g^{(N,k)}\big\rangle_{\mathcal H}
\\
&=
 \sum_{k=1}^N\,\mathbf v_{N,k}\Big(\int_\Omega F_{1:N}(A)F_{1:N}(A)^\top\,d\mu\Big)\mathbf v_{N,k}^\top
\\
&\le
\lim_{n\to\infty} \sum_{k=1}^N\mathbf v_{n,k}\Big(\int_\Omega F_{1:n}(A)F_{1:n}(A)^\top\,d\mu\Big)\mathbf v_{n,k}^\top+\Delta.
\end{align*}
\normalsize
		Applying \eqref{Ae}  and\eqref{obseqvi} yields
		\begin{align*}
			&\lim_{n\to\infty} \sum_{k=1}^n\textbf{v}_{n,k}\int_\Omega  \left(F_{1:n}(A) +F_{1:n}(\xi^A) \right)\left(F_{1:n}(A) +F_{1:n}(\xi^A)\right)^T d\mu\textbf{v}_{n,k}^T
			\\
            =
            &\lim_{n\to\infty} \sum_{k=1}^n\textbf{v}_{n,k}\int_\Omega  \left(F_{1:n}(A) +F_{1:n}(\xi^{A}) \right)\left(F_{1:n}(A) +F_{1:n}(\xi^{A})\right)^T d\mu \textbf{v}_{n,k}^T
            =
            R_{A}(T).
		\end{align*}
		It therefore follows that
		$
			R_{A''}(T)
			\le
			 R_{A}(T)  +\Delta,
		$
		for all $A''\in C_m$. Since $\tilde{A}_\Delta\in C_m$, for every $m\in\N$, we have
		\begin{align*}
			\max_{A''\in C_m}R_{A''}(T)&\ge  R_{\tilde{A}_\Delta}(T)
			\ge R_{A}(T)-\Delta.
		\end{align*}
		Hence 
		\begin{align*}
			\left|\max_{A''\in C_m}R_{A''}(T)-R_{A}\right|<\Delta.
		\end{align*}
		Since $\lim_{n\to\infty}R_{A_n}(T)=\sup_{A'\in C_{\mathcal{A}}(A)}R_{A'}(T)$, and $R_{A_n}(T)\le \sup_{A'\in C_{\mathcal{A}}(A)}R_{A'}(T)$ (since $A_n\in C_{\mathcal{A}}(A)$) it follows that $\lim_{m\to\infty}\max_{A''\in C_m}R_{A''}(T) =\sup_{A'\in C_{\mathcal{A}}(A)}R_{A'}(T)$ and therefore there exists $M\in\N$ such that if $m\ge M$, $\left| \max_{A''\in C_m}R_{A''}(T) -\sup_{A'\in C_{\mathcal{A}}(A)}R_{A'}(T)\right|<\Delta$.
		Therefore, for $m\ge M$
		\begin{align*}
			\left| \sup_{A'\in C_{\mathcal{A}}(A)}R_{A'}(T)-R_{A}(T)\right|
			&\le 
			\left| \sup_{A'\in  C_{\mathcal{A}}(A)}R_{A'}(T)-\max_{A''\in C_m}R_{A''}(T)\right|
			\\
			&+
			\left|\max_{A''\in C_m}R_{A''}(T)-R_{A}(T)\right|< 2\Delta
		\end{align*}
		and by letting $\Delta\to 0$ we get
		$$ \sup_{A'\in C_{\mathcal{A}}(A)} R_{A'}(T)=R_{A}(T),$$
		as was to be shown.
\end{proof}
	
\subsection{Proof of Theorem \ref{minim}}

\begin{proof}
By Theorem~\ref{WR1}, 
$
\sup_{A'\in C_{\mathcal{A}}(A)} R_{A'}(T)=R_{A}(T).
$
Let $g:\mathcal{H}\to\R$ (recall $\mathcal{H}=L^2(\Omega;\mathbb{H}_o)$) be $g(u):=\lVert u-Y^{A}\rVert_{\mathcal{H}}^{2}$.  
Its Fr\'echet derivative is
\begin{equation}\label{eq:g-deriv}
g'(u)[v]\ =\ 2\,\langle u-Y^{A},v\rangle_{\mathcal H}
\ =\ 2\,J_{\mathcal{H}}(u-Y^{A})(v),
\qquad u,v\in\mathcal{H}.
\end{equation}
Since $R_{A}=g\circ\Gamma$ and $\Gamma$ is bounded linear, the Banach chain rule gives, for $T,h\in V_1$,
\begin{align}
DR_{A}(T)[h]
&= g'(\Gamma T)(\Gamma h) \nonumber \\
&= 2\,J_{\mathcal{H}}(\Gamma T-Y^{A})(\Gamma h) = 2\,(\Gamma^{\ast}J_{\mathcal{H}}(\Gamma T-Y^{A}))(h).
\end{align}
Thus $DR_{A}(T)=2\,\Gamma^{\ast}J_{\mathcal{H}}(\Gamma T-Y^{A})\in (V_1)^{\ast}$.

The necessary first-order condition yields $DR_{A}( T^{\ast})=0$, i.e.
$
\Gamma^{\ast}J_{\mathcal{H}}(\Gamma T^{\ast}-Y^{A})=0,
$
equivalently $C_{XX} T^{\ast}=C_{XY}$ in $(V_1)^{\ast}$.  
Conversely, if $C_{XX} T^{\ast}=C_{XY}$ then for any $h\in V_1$,
\begin{align}
R_{A}( T^{\ast}+h)-R_{A}( T^{\ast})
= 2\,J_{\mathcal{H}}(\Gamma T^{\ast}-Y^{A})(\Gamma h)
   +\lVert\Gamma h\rVert_{\mathcal{H}}^{2} 
= \lVert\Gamma h\rVert_{\mathcal{H}}^{2}\ \ge 0,
\end{align}
so $ T^{\ast}$ minimizes $R_{A}$.  
This proves (i).

\noindent For (ii), suppose $ T_1, T_2$ both solve $C_{XX} T=C_{XY}$.  
Then
$
C_{XX}( T_1- T_2)=0,
$
so $ T_1- T_2\in\Ker(C_{XX})$.  
Thus the full solution set is $ T_0+\Ker(C_{XX})$.
Moreover, since $C_{XX}=\Gamma^{\ast}J_{\mathcal H}\Gamma$, we have $\ker(C_{XX})=\ker(\Gamma)$.
Hence the minimizer is unique if and only if $\Ker(\Gamma)=\{0\}$.
\end{proof}

\subsection{Proof of Corollary \ref{cor:coord-min}}
\begin{proof}[Proof of Corollary~\ref{cor:coord-min} ]
We start with proving (i). By Theorem~\ref{minim}, 
$$\min_{ T\in V_1}\sup_{A'\in C_{\mathcal A}(A)} R_{A'}(T)
=\min_{ T\in V_1} R_{A}(T)$$ 
and the minimizer satisfies 
$C_{XX} T^*=C_{XY}$.  Note that
$Q_{A}( T, T)=\langle v,\Sigma v\rangle_{\ell^2},$
where $ T=\sum_{k,l\ge 1} v_{k,l}\,\phi_k\psi_l$ and therefore $\lVert T \rVert_{V_1}=\lVert v\rVert_{\ell^2}$. By assumption $Q_{A}( T, T)\ge c \lVert T \rVert_{V_1}^2$ which is equivalent to
$\langle v,\Sigma v\rangle_{\ell^2}\ge c \lVert v\rVert_{\ell^2}^2,$
i.e. $C_{XX}\succeq cI$. Recall that $V_1$ is a Hilbert space. Define the continuous bilinear form

$$a( T,\eta):=\mathcal Q_{A}( T,\eta),\qquad  T,\eta\in V_1,$$
and the continuous linear functional

$$f(\eta):=\mathcal L_{A}(\eta),\qquad \eta\in V_1.$$
By hypothesis there exists $c>0$ such that
\begin{equation}\label{eq:coerc}
a( T, T)\ \ge\ c\,\lVert T \rVert_{V_1}^2\qquad \forall\, T\in V_1,
\end{equation}
i.e. $a$ is coercive on $V_1$.

\smallskip
\noindent\emph{Existence and uniqueness of a solution to the normal equation.}
By the Lax--Milgram theorem, there exists a unique $ T_{A}^\star\in V_1$ such that
\begin{equation}\label{eq:normal}
a( T_{A}^\star,\eta)\ =\ f(\eta)\qquad \forall\,\eta\in V_1,
\end{equation}
i.e.
\[
\mathcal Q_{A}( T_{A}^\star,\eta)\ =\ \mathcal L_{A}(\eta)\qquad \forall\,\eta\in V_1.
\]
Moreover, taking $\eta= T_{A}^\star$ in \eqref{eq:normal} and using \eqref{eq:coerc} yields
\[
c\,\lVert  T_{A}^\star\rVert_{V_1}^2\ \le\ a( T_{A}^\star, T_{A}^\star)
\ =\ f( T_{A}^\star)
\ \le\lVert f\rVert_{(V_1)^*}\,\lVert  T_{A}^\star\rVert_{V_1},
\]
hence the a priori bound
\begin{equation}\label{eq:apriori}
\lVert T_{A}^\star\rVert_{V_1}\ \le\ c^{-1}\,\lVert f\rVert_{(V_1)^*}
\ =\ c^{-1}\,\lVert \mathcal L_{A}\rVert_{(V_1)^*}.
\end{equation}

\smallskip
\noindent\emph{Optimality for the quadratic functional.}
Write
\[
R_{A}(T)\ =\ C_{A}\ -\ 2\,\mathcal L_{A}(T)\ +\ \mathcal Q_{A}( T, T)
\ =\ C_{A}\ -\ 2\,f(T)\ +\ a( T, T),
\qquad  T\in V_1.
\]
The Fr\'echet derivative of $R_{A}$ at $ T$ in direction $\eta$ is $DR_{A}(T)[\eta] = 2\left(a( T,\eta)-f(\eta)\right).$
Thus $ T$ is a critical point iff it satisfies \eqref{eq:normal}. Since $a$ is
coercive, $R_{A}$ is strictly (indeed, strongly) convex:
\[
R_{A}( T+\eta)\ \ge\ R_{A}(T)\ +\ DR_{A}(T)[\eta]\ +\ c\,\lVert\eta\rVert_{V_1}^2,
\]
by \eqref{eq:coerc}. Therefore the unique solution $ T_{A}^\star$ of \eqref{eq:normal}
is the unique minimizer of $R_{A}$ on $V_1$, and it satisfies the bound \eqref{eq:apriori}. This proves (i).

\noindent\emph{Proof of (ii).}
Fix orthonormal bases $(\phi_k)_{k\ge1}$ of $\mathbb H_o$ and $(\psi_\ell)_{\ell\ge1}$ of $\H$.
For $T\in V_1=\mathcal{HS}(\H^*,\mathbb H_o)$ define its coefficient array
$v=(v_{k,\ell})_{k,\ell}\in\ell^2$ by
\[
v_{k,\ell}:=\big\langle T(\psi_\ell),\phi_k\big\rangle_{\mathbb H_o},\qquad k,\ell\ge1.
\]
Then the Hilbert--Schmidt isometry yields
\begin{equation}\label{eq:HS-isometry}
\lVert T\rVert_{V_1}^2=\sum_{k,\ell\ge1}|v_{k,\ell}|^2=\lVert v\rVert_{\ell^2}^2.
\end{equation}
Moreover, for every $x\in\H^*$ we have the expansion
\begin{equation}\label{eq:T-expansion}
T(x)=\sum_{k,\ell\ge1} v_{k,\ell}\,\langle \psi_\ell,x\rangle_{\H,\H^*}\,\phi_k,
\end{equation}
where the series converges in $\mathbb H_o$.

Define $b\in\ell^2$ and the bounded positive semidefinite operator $\Sigma:\ell^2\to\ell^2$ by
\small
\[
b_{k,\ell}:=\int_\Omega \big\langle Y^{A},\phi_k\big\rangle_{\mathbb H_o}\,
\big\langle \psi_\ell,X^{A}\big\rangle_{\H,\H^*}\,d\mu,
\]
\normalsize
and
\small
\[
\Sigma^{(k,\ell)\,(k',\ell')}:=
\delta_{k,k'}\int_\Omega
\big\langle \psi_\ell,X^{A}\big\rangle_{\H,\H^*}\,
\big\langle \psi_{\ell'},X^{A}\big\rangle_{\H,\H^*}\,d\mu,
\qquad k,k',\ell,\ell'\ge1,
\]
\normalsize
so that $\Sigma$ acts on $\ell^2$ by left multiplication.

We now rewrite the quadratic functional $R_A(T)=\lVert T(X^A)-Y^A\rVert_{\mathcal H}^2$ in coordinates.
First, expanding $T(X^A)$ in the basis $(\phi_k)$ and using \eqref{eq:T-expansion} gives
\small
\[
\big\langle T(X^A),\phi_k\big\rangle_{\mathbb H_o}
=\sum_{\ell\ge1} v_{k,\ell}\,\big\langle \psi_\ell,X^A\big\rangle_{\H,\H^*},
\qquad k\ge1,
\]
\normalsize
with convergence in $L^2(\Omega)$.
Hence, by Parseval in $\mathbb H_o$ and Fubini--Tonelli,
\small
\begin{align*}
\mathcal Q_A(T,T)
&=\int_\Omega \lVert T(X^A)\rVert_{\mathbb H_o}^2\,d\mu
=\sum_{k\ge1}\int_\Omega \big|\langle T(X^A),\phi_k\rangle_{\mathbb H_o}\big|^2\,d\mu \\
&=\sum_{k\ge1}\int_\Omega \Big|\sum_{\ell\ge1} v_{k,\ell}\,\langle \psi_\ell,X^A\rangle_{\H,\H^*}\Big|^2\,d\mu \\
&=\sum_{k\ge1}\sum_{\ell,\ell'\ge1} v_{k,\ell}\,v_{k,\ell'}\,
\int_\Omega \langle \psi_\ell,X^A\rangle_{\H,\H^*}\,
\langle \psi_{\ell'},X^A\rangle_{\H,\H^*}\,d\mu \\
&=\sum_{k,\ell,\ell'\ge1} v_{k,\ell}\,\Sigma^{(k,\ell)\,(k,\ell')}\,v_{k,\ell'}
=\langle v,\Sigma v\rangle_{\ell^2}.
\end{align*}
\normalsize
Similarly,
\small
\begin{align*}
\mathcal L_A(T)
&=\int_\Omega \langle Y^A,T(X^A)\rangle_{\mathbb H_o}\,d\mu
=\sum_{k\ge1}\int_\Omega \langle Y^A,\phi_k\rangle_{\mathbb H_o}\,
\langle T(X^A),\phi_k\rangle_{\mathbb H_o}\,d\mu \\
&=\sum_{k,\ell\ge1} v_{k,\ell}\int_\Omega
\langle Y^A,\phi_k\rangle_{\mathbb H_o}\,
\langle \psi_\ell,X^A\rangle_{\H,\H^*}\,d\mu
=\langle v,b\rangle_{\ell^2}.
\end{align*}
\normalsize
Therefore
\[
R_A(T)=C_A-2\,\langle v,b\rangle_{\ell^2}+\langle v,\Sigma v\rangle_{\ell^2}
=:F(v),
\qquad v\in\ell^2.
\]

Under the coercivity assumption of (i), equivalently $\Sigma\succeq c\,I$ on $\ell^2$,
the functional $F$ is strictly convex and Fr\'echet differentiable on $\ell^2$, with
\[
DF(v)(h)=2\,\langle \Sigma v-b,\,h\rangle_{\ell^2},\qquad v,h\in\ell^2.
\]
Hence $DF(v^\star)=0$ iff $\Sigma v^\star=b$, and by strict convexity this is also sufficient.
Thus the unique minimizer has coefficient vector $v^\star=\Sigma^{-1}b\in\ell^2$, and in particular
$
\sum_{k,\ell\ge1}|v^\star_{k,\ell}|^2<\infty.
$
Finally, inserting $v^\star$ into \eqref{eq:T-expansion} yields the coordinate formula stated in (ii),
and the series converges in $\mathbb H_o$ for each $x\in\H^*$. This proves (ii)

\noindent For the proof of (iii), let $R:V\to V^*$ be the Riesz isomorphism.
The bilinear form $\mathcal Q_A$ induces a bounded self--adjoint positive semidefinite operator
$K:V_1\to V_1^*$ via
\[
(KT)(\eta):=\mathcal Q_A(T,\eta),\qquad T,\eta\in V_1.
\]
Set $B:=R^{-1}K:V_1\to V_1$. Then $B$ is bounded, self--adjoint and positive semidefinite. Indeed,
for $T,\eta\in V_1$,
\[
\langle BT,\eta\rangle_{V_1}
=\langle R^{-1}KT,\eta\rangle_{V_1}
=(KT)(\eta)
=\mathcal Q_A(T,\eta)
=\mathcal Q_A(\eta,T)
=(K\eta)(T)
=\langle B\eta,T\rangle_{V_1},
\]
so $B$ is self--adjoint, and $\langle BT,T\rangle_{V_1}=\mathcal Q_A(T,T)\ge0$. Write $\ell:=R^{-1}\mathcal L_A\in V_1$. Then
$
R_A(T)=C_A-2\,\langle \ell,T\rangle_{V_1}+\langle BT,T\rangle_{V_1},
$
and any critical point satisfies the normal equation
$
BT=\ell\quad\text{in }V_1.
$
Assume $\mathcal L_A\in\overline{\mathrm{Range}(K)}\subset V_1^*$. Applying $R^{-1}$ yields
$\ell\in\overline{\mathrm{Range}(B)}\subset V_1$.
By the spectral theorem, there exists a projection--valued measure $E(\cdot)$ such that
$B=\int_{\sigma(B)}\lambda\,dE(\lambda)$. Define the Moore--Penrose pseudoinverse
$
B^\dagger:=\int_{\sigma(B)\setminus\{0\}}\lambda^{-1}\,dE(\lambda),
$
which is bounded on $\overline{\mathrm{Range}(B)}$. Moreover,
$BB^\dagger=P_{\overline{\mathrm{Range}(B)}}$, hence $BB^\dagger \ell=\ell$.
Therefore $T^\star:=B^\dagger\ell$ solves $BT=\ell$.
The set of all solutions is $T^\star+\ker(B)$, and the unique solution of minimal $V_1$--norm is
$T^\star=B^\dagger\ell$.

\noindent In the coordinate realization from (ii), $B$ corresponds to the positive semidefinite operator
$\Sigma$ on $\ell^2$, $\ell$ corresponds to $b$, and $B^\dagger$ corresponds to $\Sigma^\dagger$.
Hence the minimal norm coefficient vector is $v^\star=\Sigma^\dagger b\in\ell^2$, and
$T^\star=\sum_{k,\ell} v^\star_{k,\ell}\,\phi_k\otimes\psi_\ell$ converges in $V_1$.

\end{proof}

\end{document}